\numberwithin{equation}{section}
\newcommand{\dist}{\text{dist}} 
\newcommand{\diam}{\text{diam}}
\renewcommand{\div}{\text{div}\,}
\newcommand{\osc}{\text{osc}}
\newcommand{\R}{{\mathbb R}} \newcommand{\K}{{\mathbb K}}
\newcommand{\N}{{\mathbb N}}
\newcommand{\e}{\varepsilon} 
\newcommand{\F}{\mathbf{F}}
\newcommand{\A}{\mathbf{A}}
\newcommand{\ba}{\mathbf{a}}
\newcommand{\M}{{\mathrm M}}
\newcommand{\calM}{{\mathcal M}}
\def\longequals{\mathbin{=\kern-2pt=}}
\def\eqdef{\mathbin{\buildrel \rm def \over \longequals}}
\newtheorem{theorem}{Theorem}[section]
\newtheorem{corollary}[theorem]{Corollary}
\newtheorem{definition}[theorem]{Definition}
\newtheorem{remark}[theorem]{Remark}
\newtheorem{lemma}[theorem]{Lemma}
\newtheorem{proposition}[theorem]{Proposition}
\numberwithin{equation}{section}
\newcommand{\beq}{\begin{equation}}
\newcommand{\eeq}{\end{equation}}
\definecolor{darkred}{rgb}{.70,.12,.20}
\definecolor{darkgreen}{rgb}{.20,.52,.14}
\title{\vspace{-1in}  Boundary regularity for  quasilinear elliptic equations with general Dirichlet boundary data
}
\author{ Truyen  Nguyen\footnotemark[1]}
\date{}
\providecommand{\subjclass}[1]{{\textit{2010 Mathematics Subject Classification.}} #1}
\providecommand{\keywords}[1]{{\textit{Key words and phrases.}} #1}
\begin{document}
\maketitle
\begin{abstract}
We study global regularity  for solutions of  quasilinear  elliptic equations of the form $\div \A(x,u,\nabla u) = \div \F $ in 
rough domains $\Omega$ in $\R^n$ with nonhomogeneous Dirichlet boundary condition. The vector field $\A$ is assumed to be continuous in $u$, and its growth in $\nabla u$ is like that of 
the $p$-Laplace operator. We establish  global gradient estimates in weighted Morrey spaces  for  weak solutions $u$ to the 
equation under the Reifenberg flat condition for $\Omega$, 
a small BMO condition in $x$ for $\A$, and  an optimal condition for the Dirichlet boundary data.  
\end{abstract}
\subjclass{30H35, 35B45, 35B65, 35J92.}\\
\keywords{elliptic equations,  $p$-Laplacian, gradient estimates, Calder\'on-Zygmund estimates, 
boundary regularity, weighted Morrey spaces,  Muckenhoupt class.}

\renewcommand{\thefootnote}{\fnsymbol{footnote}}

\footnotetext[1]{Department of Mathematics,
The University of Akron, 302 Buchtel Common, Akron, OH 44325, USA. Email: tnguyen@uakron.edu.
The research of the  author is supported in part
by a grant  from the Simons Foundation (\# 318995) and by the UA Faculty Research grant FRG
(\# 207359)} 






\setcounter{equation}{0}

\section{Introduction}\label{sec:Intro}
We investigate global gradient estimates  for weak solutions to  the  Dirichlet problem
\begin{equation}\label{ME}
 \left\{
 \begin{alignedat}{2}
  &\div  \A(x,  u,  \nabla u)=  \div \F \quad \mbox{ in}\quad \Omega,\\\
&u=\psi \qquad\qquad\qquad \qquad \mbox{on}\quad \partial\Omega, 
 \end{alignedat} 
  \right.
\end{equation}
when $\Omega$ is a bounded domain in $\R^n$ ($n\geq 2$) and the vector field $\A$ is only continuous in the $u$ variable and 
possibly discontinuous in the $x$ variable. Let  $\K\subset \R$ be an open interval and consider the general vector field
\[
\A = \A(x,z,\xi) : \Omega\times \overline\K\times \R^n \longrightarrow \R^n
\]
which is  a Carath\'eodory map, that is, $\A(x,z,\xi)$ is measurable in $x$ for every $(z,\xi)\in \overline \K \times \R^n$ and 
continuous in $(z,\xi)$ for a.e. $x$. We assume that $\xi \mapsto\A(x,z,\xi)$ is differentiable on 
$ \mathbb{R}^n\setminus \{0\}$ for a.e. $x$ and all $z\in \overline \K$. Also,  there exist constants  $\Lambda>0$,  $1< p<\infty$, 
and a nondecreasing and right continuous function $\omega: [0,\infty)\to [0,\infty)$ with $\omega(0)=0$
such that  the following  conditions are satisfied  for a.e. $x\in \Omega$ and all $z\in \overline\K$:
\begin{align}
&\langle \partial_\xi \A(x,z,\xi) \eta, \eta\rangle \geq \Lambda^{-1} |\xi|^{p-2} |\eta|^2\,  \qquad\qquad \qquad \qquad\forall \xi\in \R^n\setminus \{0\} \mbox{ and }\forall \eta\in\R^n,\label{structural-reference-1}\\ 
& |\A(x,z,\xi)|  + |\xi|\,  | \partial_\xi \A(x,z,\xi) | \leq \Lambda |\xi|^{p-1}\qquad\qquad\qquad \forall\xi\in  \R^n,\label{structural-reference-2}\\
& |\A(x,z_1,\xi)-\A(x,z_2,\xi)|  \leq \Lambda |\xi|^{p-1} \omega(|z_1 - z_2|) \qquad\quad \,\forall z_1, z_2\in \overline\K\mbox{ and }
\forall \xi\in \R^n. \label{structural-reference-3}
\end{align}

Interior $C^{1,\alpha}$ theory for the homogeneous equation associated to \eqref{ME} 
was established by DiBenedetto \cite{D} and Tolksdorf \cite{T}  extending the celebrated  H\"older gradient estimates by   
Ural\'tceva \cite{Ur} and  Uhlenbeck \cite{Uh}
for the homogeneous $p$-Laplace equation. Furthermore,  Lieberman \cite{Li} derived the   global $C^{1,\alpha}$ estimates for bounded weak
solutions of the corresponding  Dirichlet boundary problem when both the domain and the boundary data are of class $C^{1,\alpha}$.
On the other hand, interior $W^{1,q}$ estimates  for  the nonhomogeneous  quasilinear equation
\eqref{ME}  was investigated in \cite{NP} using the perturbation method by 
Caffarelli-Peral \cite{CP} together with the two-parameter scaling technique  introduced  in  \cite{HNP1} to deal with a specific 
parabolic equation.
 When $\A$  has sufficiently small BMO oscillation in $x$ and  is Lipschitz continuous in the $z$ variable, 
it was established in  \cite{NP} that: $|\F|^{\frac{1}{p-1}}\in L^q_{loc}  \Longrightarrow \nabla u\in L^q_{loc}$ for any $q>p$.  This result 
was extended in \cite{BPS,N,Ph1,Ph2} to cover more general situations. In particular, 
the authors of \cite{BPS}  derived the corresponding global estimates for Reifenberg flat domains and for zero Dirichlet boundary data.  Moreover,  they  were able to weaken the condition on $\A$ by 
allowing only H\"older continuity in the  $z$ variable. In a recent paper \cite{DiN} with Di Fazio,  we obtained interior gradient estimates in generalized  weighted Morrey spaces for solutions of \eqref{ME} 
when $\A$ is merely  continuous   in $z$. This, in particular, extends
the gradient estimates in the classical  Morrey spaces  obtained in  \cite{Ra,MP2} for the case  $\A(x,z,\xi)=\A(x,\xi)$.

Our purpose of the current work is two folds. On one hand, we 
 extend the mentioned  result in  \cite{BPS} to general and optimal  Dirichlet boundary condition. On the  other hand, we develop the boundary counterpart of the interior estimates in  \cite{DiN} by 
 deriving global  gradient  estimates in generalized weighted Morrey spaces for  bounded  solutions of  \eqref{ME} in Reifenberg flat domains.  These two goals are treated  in  a unified manner  and our achieved results give a comprehensive  picture of  gradient estimates for equation \eqref{ME}.
   In what follows we consider a bounded domain with its  boundary
 being   flat in the following sense of Reifenberg \cite{Re}.
\begin{definition}\label{def:Re} Let $\delta,\, R>0$. 
A bounded domain $\Omega\subset \R^n$  ($n\geq 2$)   is said to be $(\delta,R)$-Reifenberg flat if for every  $\bar x\in \partial\Omega$ and 
every $\rho\in (0,R)$ there is a local
coordinate system $\{x_1,\dots, x_n\}$ with origin at  the point $\bar x$ and such that 
\begin{align*}
 B_\rho(\bar x) \cap \{x:\, x_n > \rho \delta \} \subset  B_\rho(\bar x)\cap \Omega \subset B_\rho(\bar x) \cap \{x:\, x_n > -\rho \delta \}.
\end{align*}
\end{definition}
 The Reifenberg flatness means that the boundary $\partial\Omega$  is well approximated by hyperplanes at every point and at every scale.
We note that  a domain $\Omega$ is  Reifenberg flat if its boundary is $C^1$ smooth or, more generally, 
its boundary  is locally given as the graph of a Lipschitz continuous function with small Lipschitz constant.  However, the
class of Reifenberg flat domains is much larger  and contains domains with rough fractal
boundaries (see \cite{To}). 
In order to state our main results, let us recall the so-called Muckenhoupt class of $A_s$ weights. By definition,  a weight is  a nonnegative  locally  integrable function on $\R^n$ that is positive almost everywhere.
A weight $w$ belongs to the  class $A_s$, $1<s< \infty$, if 
\[
 [w]_{A_s} :=  \sup{ \Big(\fint_{B} w(x)\, dx\Big) \Big(\fint_{B} w(x)^{\frac{-1}{s-1}}\, dx\Big)^{s-1}  } <\infty,
\]
where the supremum is  taken over all balls $B$ in $\R^n$.  
We also say that  $w$ belongs to the class $A_\infty$  if 
\[
 [w]_{A_\infty}  :=  \sup{ \Big(\fint_{B} w(x)\, dx\Big) \exp{\Big(\fint_{B} \log{ w(x)^{-1}}\, dx\Big)}  } <\infty.
\]
Now let $U\subset \R^n$ be a bounded open set, $w$ be a weight, $1\leq q <\infty$, and $\varphi$ be a positive function on the set of nonempty
open balls in $\R^n$. 
A function $g: U\to \R$ is said to 
belong to the weighted space $L^q_w(U)$ if
\[
 \|g\|_{L^q_w(U)} := \Big(\int_{U} |g(x)|^q w(x)\, dx\Big)^{\frac1q}<\infty.
\]
We define the weighted Morrey 
space $\calM^{q,\varphi}_w(U)$  to be the set of all functions $g\in L^q_w(U)$ satisfying
\begin{equation}\label{weighted-morrey}
\|g\|_{\calM^{q,\varphi}_w(U)} := \sup_{ \bar x\in U,\, 0<r\leq \diam(U)} \Bigg(\frac{\varphi(B_r(\bar x))}{ w(B_r(\bar x))} \int_{B_r(\bar x)\cap U}
|g(x)|^q w \, dx \Bigg)^{\frac1q}<\infty.
\end{equation}
Notice that $\calM^{q,\varphi}_w(U)=L^q_w(U)$ if  $\varphi(B)=  w(B)$, and  we obtain the classical 
Morrey space $\calM^{q,\lambda}(U)$ ($0\leq \lambda\leq n$) by taking $w=1$ and $\varphi(B)= |B|^{\frac{\lambda}{n}}$. For brevity,
the Morrey space $\calM^{q,\varphi}_w(U)$ with $w=1$ will be  denoted by 
$\calM^{q,\varphi}(U)$.  Let us next recall the class $\mathcal B_\alpha$ for $\varphi$ that was introduced in \cite{DiN}.
\begin{definition}\label{class-B} Let $\varphi$ be a positive function on the set of nonempty
open balls in $\R^n$. We say that $\varphi$ belongs to the class $\mathcal B_\alpha$ with $\alpha\geq 0$ if there exists  $C>0$ such that
\[
  \frac{\varphi(B_r(x))}{\varphi(B_s(x))}\leq C \Big(\frac{r}{s}\Big)^\alpha
\]
for every $x\in\R^n$ and every $0<r\leq s$. We define $\mathcal B_+ :=\cup_{\alpha>0} \mathcal B_\alpha \subset \mathcal B_0$.
 \end{definition}
 
Throughout the paper the conjugate exponent of a number $l\in (1,\infty)$ is denoted by 
$l'$.  We will also adopt the following notation:
$\Omega_r(y):=\Omega\cap B_r(y)$, $\Omega_r :=\Omega_r(0)$, and 
 $\langle\A\rangle_{\Omega_r(y)}(z,\xi) := \frac{1}{|B_r(y)|}\int_{\Omega_r(y)} \A(x,z,\xi)\, dx$.
 On the other hand, $\bar g_E :=\frac{1}{|E|}\int_{E} g(x)\, dx$ whenever $g\in L^1(E)$ with 
$E\subset \R^n$ being  a measurable and bounded  set.
For a fixed number $M>0$, we  define 
\begin{equation}\label{def:Theta}
\Theta_{\Omega_r(y)}(\A):= \sup_{z\in \overline\K \cap [-M, M]} 
\frac{1}{|B_r(y)|}\int_{\Omega_r(y)}\Big[\sup_{ \xi\neq 0}\frac{|\A(x,z,\xi) -
\langle\A\rangle_{\Omega_r(y)}(z,\xi)|}{|\xi|^{p-1}}\Big]\, dx.
\end{equation}
Our first  main result is:
\begin{theorem}\label{thm:weighted-morrey} 
Let $\A$ satisfy 
\eqref{structural-reference-1}--\eqref{structural-reference-3} with $p>1$,  and let $w$ be an $A_s$ weight for some $1<s<\infty$.
Then for any $q\geq p$,  $M>0$, and $\varphi\in \mathcal B_+$ with $\sup_{x\in \Omega} \varphi( B_{\diam(\Omega)}(x))<\infty$, 
there exists a constant   $\delta=\delta( p, q,  n,\omega,  \Lambda, M,s, [w]_{A_s})>0$  such that: 
if $\Omega$ is  $(\delta,R)$-Reifenberg flat,
\begin{equation}\label{smallness-1}
\sup_{0<r< R}\sup_{y\in\overline{\Omega}}  \Theta_{\Omega_r(y)}(\A)\leq \delta,
\end{equation}
and $u$  is a weak solution of 
\eqref{ME} satisfying $\|u\|_{L^\infty(\Omega)}+ \|\psi\|_{L^\infty(\Omega)}\leq M$, we have
\begin{equation}\label{weighted-morrey-est}
\| \nabla u \|_{\calM^{q,\varphi}_w(\Omega)} \leq    C\Bigg(
\|\nabla u\|_{L^p(\Omega)} +  \| \M_{\Omega}(|\psi|^{p} + |\F|^{p'})\|_{\calM^{1, \varphi^{\frac{p}{q}}}(\Omega)}^{\frac{1}{p}} 
+ \|\M_{\Omega} (|\psi|^{p} +|\F|^{p'})\|_{\calM^{\frac{q}{p},\varphi}_w(\Omega)}^{\frac{1}{p}}\Bigg).
\end{equation}
Here $\M_{\Omega}$ denotes  the centered Hardy--Littlewood   maximal operator (see Definition~3.1 in \cite{DiN}), and  $C>0$ 
is a constant depending only on  $q$, $p$, $n$,  $\omega$,  $\Lambda$, $M$, $\varphi$, $s$, $R$, $\diam(\Omega)$, and $[w]_{A_s}$.
\end{theorem}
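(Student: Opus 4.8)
The plan is to run the Caffarelli--Peral perturbation method in the two-parameter scaled form of \cite{HNP1,NP}, merging the Reifenberg-boundary analysis of \cite{BPS} with the interior weighted-Morrey machinery of \cite{DiN}, and then to pass from local comparison estimates to the global bound \eqref{weighted-morrey-est} by a good-$\lambda$/Vitali covering argument combined with the $A_s$ weight theory and a dyadic summation over Morrey scales. Under the standing bound $\|u\|_{L^\infty(\Omega)}+\|\psi\|_{L^\infty(\Omega)}\le M$ and the continuity \eqref{structural-reference-3}, the modulus $\omega$ is made to act as a smallness parameter in the same way that the BMO parameter $\delta$ of \eqref{smallness-1} does — this is precisely what allows $\A$ to be only continuous, not Lipschitz, in $z$, and is the role of the two-parameter scaling together with the interior/boundary H\"older continuity of $u$ (so that $\omega(\osc_{\Omega_r(y)}u)\to 0$ as $r\to 0$).

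\smallskip

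The technical core is a pair of comparison estimates. Fix a ball $B_{8r}(y)$ that is either interior ($B_{8r}(y)\subset\Omega$) or boundary-centred ($y\in\partial\Omega$, $8r<R$, so that Definition~\ref{def:Re} straightens $\partial\Omega$ near $y$ into a hyperplane up to the error $\delta r$ and realizes $\Omega_{6r}(y)$ as a near half-ball). One freezes the $x$-dependence and the $z$-slot, replacing $\A(x,u,\nabla(\cdot))$ by $\langle\A\rangle_{\Omega_{8r}(y)}(z_0,\nabla(\cdot))$; in the boundary case one first replaces $u$ on $\Omega_{6r}(y)$ by the frozen solution $\tilde u$ carrying the same boundary trace as $u$ (so $u-\tilde u\in W^{1,p}_0(\Omega_{6r}(y))$), and then replaces $\tilde u$ by the \cite{BPS}-type reference $V$ having zero data on the straightened boundary portion. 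Using \eqref{structural-reference-1}--\eqref{structural-reference-3}, \eqref{smallness-1}, the Caccioppoli inequality, the interior $C^{1,\alpha}$ theory (DiBenedetto \cite{D}, Tolksdorf \cite{T}) or the global Lipschitz estimate up to a flat boundary (Lieberman \cite{Li}, \cite{BPS}) respectively, and — near the flat boundary — the comparison principle for the monotone operator $\div\langle\A\rangle_{\Omega_{8r}(y)}(z_0,\nabla(\cdot))$, one obtains exponents $\sigma=\sigma(n,p)\in(0,1)$, a constant $N_0=N_0(n,p,\Lambda)$, and a function $V$ with $\|\nabla V\|_{L^\infty(\Omega_{4r}(y))}$ bounded by $N_0$ times a scaled average of $|\nabla u|^p$ plus source, such that
\[
\fint_{\Omega_{4r}(y)}|\nabla u-\nabla V|^{p}\,dx\ \le\ C\big(\delta^{\sigma}+\omega(\osc_{\Omega_{8r}(y)}u)^{\sigma}\big)\fint_{\Omega_{8r}(y)}|\nabla u|^{p}\,dx\ +\ C\fint_{\Omega_{8r}(y)}\Big(|\F|^{p'}+\tfrac{|\psi|^{p}}{r^{p}}\Big)\,dx .
\]
The point — new compared with \cite{BPS}, where $\psi\equiv 0$ — is that this two-step comparison never generates $\nabla\psi$: the $\tilde u$-comparison is tested against $u-\tilde u\in W^{1,p}_0$, and the difference $\tilde u-V$ is controlled on $\Omega_{6r}(y)$ purely through the boundary \emph{values} $\psi$, via the comparison principle and a Caccioppoli estimate valid up to the flat portion of $\partial\Omega_{6r}(y)$, so only $|\psi|^{p}$ enters the source.

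\smallskip

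From here the argument follows the weighted scheme of \cite{DiN}. Comparing $\M_\Omega(|\nabla u|^{p})$ on sub-balls with $\M_\Omega(|\nabla u-\nabla V|^{p})+\|\nabla V\|_{L^\infty}^{p}$ and using the weak $(1,1)$ bound for $\M_\Omega$ turns the displayed inequality into a good-$\lambda$/density inequality for the super-level sets of $\M_\Omega(|\nabla u|^{p})$; the $A_s\subset A_\infty$ property (strong doubling together with $w(E)/w(B)\le C(|E|/|B|)^{\nu}$ for $E\subset B$, with $C,\nu$ depending on $[w]_{A_s}$ — this is what makes $\delta$ depend on $[w]_{A_s}$) upgrades it to the corresponding $w$-density inequality. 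Iterating over $\lambda=N^{kp}\lambda_0$ and summing yields, for every ball $B_r(\bar x)$, a bound for $\int_{\Omega_r(\bar x)}|\nabla u|^{q}\,w\,dx$ in terms of $w(B_r(\bar x))$ times $\tfrac qp$-powers of $\lambda_0$ and of weighted and unweighted tails of $\M_\Omega(|\F|^{p'}+|\psi|^{p}/r^{p})$, with $\lambda_0$ a suitable average of $|\nabla u|^{p}$ plus source — the origin of the term $\|\nabla u\|_{L^{p}(\Omega)}$ in \eqref{weighted-morrey-est}. Dividing by $w(B_r(\bar x))$, multiplying by $\varphi(B_r(\bar x))$ and taking the supremum over $\bar x\in\Omega$, $0<r\le\diam\Omega$, the hypothesis $\varphi\in\mathcal B_+$ makes the sums over dyadic scales geometric and convergent, while $\sup_{x\in\Omega}\varphi(B_{\diam\Omega}(x))<\infty$ and $r\le\diam(\Omega)$ bound the top scale and absorb the $r^{-p}$ in the $\psi$-term into the constant — hence the dependence of $C$ on $R$ and $\diam\Omega$. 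Tracking the two sources through this assembly yields exactly the right-hand side of \eqref{weighted-morrey-est}: the $\lambda_0$-part gives the unweighted norm $\|\M_\Omega(|\psi|^{p}+|\F|^{p'})\|_{\calM^{1,\varphi^{p/q}}(\Omega)}^{1/p}$ (the $L^1$-Morrey scale and the exponent $p/q$ on $\varphi$ being forced by the level-set / weak-$(1,1)$ step), and the weighted higher-integrability iteration gives $\|\M_\Omega(|\psi|^{p}+|\F|^{p'})\|_{\calM^{q/p,\varphi}_w(\Omega)}^{1/p}$.

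\smallskip

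I expect the main obstacle to be the boundary comparison estimate for the inhomogeneous equation \eqref{ME}: constructing a bounded-gradient approximation $V$ near a Reifenberg-flat boundary while simultaneously (a) forcing the $z$-modulus $\omega$ to act as a genuine smallness parameter via the two-parameter scaling, and (b) arranging the comparison so that the source depends on $\psi$ only through $|\psi|^{p}$, not through $|\nabla\psi|^{p}$. The comparison-principle plus Caccioppoli-up-to-the-flat-boundary step securing (b), uniformly in the Reifenberg parameter $\delta$ and in the scale $r$, is the heart of the matter; everything downstream of the comparison estimates is a careful but by now standard combination of the $A_s$-weighted covering iteration with the Morrey summation of \cite{DiN}.
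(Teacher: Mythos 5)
Your proposal re-derives the entire perturbation pipeline and, in doing so, misses the one device that actually makes the Morrey estimate \eqref{weighted-morrey-est} fall out. The paper proves Theorem~\ref{thm:weighted-morrey} as a short corollary of the already-established localized weighted $L^q$ estimate (Theorem~\ref{thm:localized-main}): apply it ball-by-ball, then dispose of the troublesome unweighted first term $\bigl(\fint_{\Omega_{2r}(\bar x)}|\nabla u|^p\,dx\bigr)^{q/p}$ by a Mengesha--Phuc device — write it as $(2r)^{-\e}\int_\Omega|\nabla u|^p\bar w\,dx$ for the auxiliary weight $\bar w(x)=\min\{|x-\bar x|^{-n+\e},(2r)^{-n+\e}\}\in A_t$, invoke Theorem~\ref{thm:main} with $q=p$ and this $\bar w$, and then estimate $\int_\Omega\M_\Omega(G^p)\bar w\,dx$ via Fubini and the definition of the $\calM^{1,\varphi^{p/q}}(\Omega)$ norm. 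That is the origin of the middle term in \eqref{weighted-morrey-est}. You attribute that term instead to ``the $\lambda_0$-part'' of a good-$\lambda$ iteration, which is not right: the $\lambda_0$ level in the iteration (the $\Upsilon$ normalization in the paper) produces only the $\|\nabla u\|_{L^p(\Omega)}$ contribution. Without the auxiliary-weight step you have no mechanism to turn the unweighted $L^p$ average into a $\calM^{1,\varphi^{p/q}}(\Omega)$ norm.

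More seriously, your comparison estimate has an unabsorbable $r^{-p}$. You claim a source of the form $\fint_{\Omega_{8r}(y)}\bigl(|\F|^{p'}+|\psi|^p/r^p\bigr)\,dx$ and then argue that $r\le\diam\Omega$ lets you absorb $r^{-p}$ into the constant. This fails: $r$ ranges down to $0$, so $r^{-p}\to\infty$. In the paper the internal comparison source is $|\nabla\psi|^p+|\F|^{p'}$ — see Lemma~\ref{lm:step1} and the hypotheses of Theorems~\ref{thm:main} and \ref{thm:localized-main} — which is scale-correct; the bounded-data contribution $\|\psi\|_{L^\infty}^p$ enters only through $(M/\lambda\theta)^p$, which is neutralized by the two-parameter scaling and the case split around \eqref{small-para}. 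Your boundary decomposition $u\to\tilde u\to V$ also departs structurally from the paper's $u\to h\to f\to w\to v$ chain: the paper's $h$ carries boundary data $u-\psi$ and therefore vanishes on $B_4\cap\partial\Omega$, and that vanishing is exactly what makes the boundary higher integrability of Theorem~\ref{thm:inter-regularity} and the half-ball compactness of Lemma~\ref{lm:compactness} applicable. Your $\tilde u$ retains the nontrivial trace $\psi$, and the comparison-principle-plus-Caccioppoli step you use to eliminate it is precisely where the $r^{-p}$ leaks in and cannot be removed.
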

The above theorem holds true for any weight $w$ in the class $A_\infty$. When $q>p$ and certain  additional information about the weights and $ \varphi, \, \phi$ is 
given, we can further estimate the two quantities in \eqref{weighted-morrey-est} involving the maximal function
of $|\F|^{p'}$  to obtain:
\begin{theorem}[global weighted Morrey space estimate]\label{thm:global-weighted-morrey} Let   $\A$ satisfy 
\eqref{structural-reference-1}--\eqref{structural-reference-3} with $p>1$.
Let  $q>p$, $ w\in A_\infty$, $v\in A_{\frac{q}{p}}$, $\varphi\in \mathcal B_+$ with $\sup_{x\in \Omega} \varphi( B_{\diam(\Omega)}(x))<\infty$,
and $\phi\in \mathcal B_0$  satisfy
\begin{align}
 & [w, v^{1-(\frac{q}{p})'}]_{A_{\frac{q}{p}}}:= \sup_{B}{ \Big(\fint_{B} w\, dx\Big) \Big(\fint_{B} v^{1-(\frac{q}{p})'}\, dx\Big)^{\frac{q}{p}-1}  } 
  <\infty,\label{cond:wv}\\
 & \frac{v(2 B)}{w(2 B)} \frac{1}{\phi(2 B)} \leq C_* \, \frac{1}{\varphi(B)} 
\quad \mbox{for all balls  $B\subset \R^n$}.\label{cond:vwvarphi} 
 \end{align}
Then for any $M>0$, there exists a small constant 
$\delta=\delta( p, q,  n, \omega, \Lambda, M, [w]_{A_\infty})>0$  such that: if $\Omega$ is  $(\delta,R)$-Reifenberg flat,
\eqref{smallness-1} holds,
and $u$  is a weak solution of 
\eqref{ME} satisfying $\|u\|_{L^\infty(\Omega)}+ \|\psi\|_{L^\infty(\Omega)}\leq M$, we have
\begin{equation}\label{eq:neat-global-est}
\| \nabla u \|_{\calM^{q,\varphi}_w(\Omega)}  \leq    C\Big\|  |\nabla\psi| + |\F|^{\frac{1}{p-1}}\Big\|_{\calM^{q,\phi}_v(\Omega)}
\end{equation}
with  $C$  depending only on  $q$, $p$, $n$, $\omega$,  $\Lambda$, $M$,   $\varphi$, $\phi$, $C_*$, $R$, $\diam(\Omega)$,  $[w]_{A_\infty}$,  $[v]_{A_{\frac{q}{p}}}$, and 
 $[w, v^{1-(\frac{q}{p})'}]_{A_{\frac{q}{p}}}$.
\end{theorem}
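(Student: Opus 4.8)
The strategy is to deduce this "neat" estimate from Theorem \ref{thm:weighted-morrey} by bounding the two maximal-function terms on the right-hand side of \eqref{weighted-morrey-est}. First I would set $h := |\nabla\psi| + |\F|^{\frac{1}{p-1}}$, so that $|\psi|^p + |\F|^{p'} \lesssim |\psi|^p + h^p$; by Sobolev/Poincaré-type control (using the boundedness of $\Omega$ and $\|\psi\|_{L^\infty} \le M$) the $|\psi|^p$ contribution is absorbed into $\|h\|^p$ up to constants, so it suffices to estimate $\|\M_\Omega(h^p)\|_{\calM^{1,\varphi^{p/q}}(\Omega)}^{1/p}$ and $\|\M_\Omega(h^p)\|_{\calM^{q/p,\varphi}_w(\Omega)}^{1/p}$ by $C\|h\|_{\calM^{q,\phi}_v(\Omega)}$. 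The point is that $h^p \in \calM^{q/p, \phi}_v(\Omega)$ whenever $h \in \calM^{q,\phi}_v(\Omega)$ (with exponent $q/p > 1$), so both terms become weighted-Morrey bounds for the Hardy--Littlewood maximal operator acting between Morrey spaces with possibly different weights.

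The key analytic input is a two-weight boundedness statement: for $1 < t < \infty$ and a weight pair satisfying the $A_t$-type balance condition \eqref{cond:wv} (with $t = q/p$), the maximal operator $\M$ maps $L^t_v \to L^t_w$, and — more to the point — it maps $\calM^{t,\phi}_v \to \calM^{t,\varphi}_w$ provided the geometric compatibility \eqref{cond:vwvarphi} between $v, w, \phi, \varphi$ holds and $\phi \in \mathcal B_0$, $\varphi \in \mathcal B_+$. This is the natural weighted-Morrey analogue of the Chiarenza--Frasca theorem; I would prove it by the standard localization trick, splitting $\M(h^p \chi_{\Omega})$ on a ball $B_r(\bar x)$ into the contribution from $2B_r(\bar x)$ (handled by the strong two-weight $L^t$ bound \eqref{cond:wv} applied to $h^p \chi_{2B_r}$, then converting the resulting $v(2B_r)$ factor via \eqref{cond:vwvarphi}) and the tail contribution from annuli $2^{k+1}B \setminus 2^k B$, $k \ge 1$, which is summed using the doubling of $v$ (from $v \in A_{q/p}$), the Morrey bound on $h^p$ on the dilated balls, and the decay $\varphi(B)/\varphi(2^k B) \le C 2^{-k\alpha}$ from $\varphi \in \mathcal B_+$. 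For the first, unweighted term $\|\M_\Omega(h^p)\|_{\calM^{1,\varphi^{p/q}}}$ I would instead use the elementary weak-$(1,1)$ / strong-$(t,t)$ interpolation-free argument: since $\varphi^{p/q} \in \mathcal B_+$ as well (its exponent is $\alpha p/q > 0$), and since membership of $h^p$ in $\calM^{q/p,\phi}_v$ with $v \in A_{q/p}$ forces $h^p \in \calM^{1, \phi^{p/q}}$ locally by Hölder with the $A_{q/p}$ weight, one again runs the ball-splitting argument, now for $\M$ on unweighted Morrey spaces.

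The main obstacle I anticipate is making the off-diagonal two-weight Morrey estimate genuinely quantitative with the stated dependence of $C$ only on the listed constants — in particular, keeping the tail sum over dyadic annuli convergent requires that the loss in $\phi$ on dilated balls (bounded since $\phi \in \mathcal B_0$, i.e. $\phi(2^k B) \le C \phi(B)$, so no growth) is strictly dominated by the gain $2^{-k\alpha}$ from $\varphi \in \mathcal B_+$ together with the at-most-polynomial growth of $v(2^k B)/v(B)$ coming from the $A_{q/p}$ (hence $A_\infty$, hence doubling with a dimension-and-$[v]_{A_{q/p}}$-dependent exponent) condition on $v$; one must check $\alpha$ beats that doubling exponent, or more carefully extract an extra decay from $w(2^kB)/w(2B)$ via \eqref{cond:vwvarphi} rewritten along the chain of dilations. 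A secondary technical point is the interplay with $\Omega$: all balls must be intersected with $\Omega$, and one uses that $\calM^{q,\varphi}_w(\Omega)$-norms are taken over $r \le \diam(\Omega)$, so the tail sum is in fact finite (only $\lesssim \log(\diam(\Omega)/r)$ many terms matter before the ball engulfs $\Omega$), which sidesteps any borderline convergence issue and explains the appearance of $\diam(\Omega)$ and $R$ in the constant. Once this maximal-function lemma is in hand, the theorem follows by substituting into \eqref{weighted-morrey-est} and collecting constants.
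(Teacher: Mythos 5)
Your overall strategy matches the paper's: apply Theorem~\ref{thm:weighted-morrey} and then control the two maximal-function terms via a two-weight Morrey bound for the Hardy--Littlewood maximal operator, using \eqref{cond:wv}, \eqref{cond:vwvarphi}, and the class conditions $\varphi\in\mathcal B_+$, $\phi\in\mathcal B_0$. For the second term $\|\M_\Omega(G^p)\|_{\calM^{q/p,\varphi}_w(\Omega)}^{1/p}$ your ball-splitting (localization plus tail over dyadic annuli) is exactly the content of the paper's Lemma~\ref{lm:two-weight-case}; the paper simply cites that lemma from \cite{DiN} rather than re-proving it, and verifies that hypothesis \eqref{wmu1-vmu2-sufficient} follows from \eqref{cond:vwvarphi} together with $\varphi\in\mathcal B_0$, and that condition~({\bf B}) holds because $v\in A_{q/p}$ implies $v^{1-(q/p)'}\in A_{(q/p)'}\subset A_\infty$.

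However, your treatment of the first, unweighted term $\|\M_\Omega(G^p)\|_{\calM^{1,\varphi^{p/q}}(\Omega)}^{1/p}$ has a genuine gap. You propose to first downgrade $G^p$ to $\calM^{1,\phi^{p/q}}$ by H\"older against the $A_{q/p}$ weight, and then run a ball-splitting argument for $\M$ on unweighted Morrey spaces at exponent~$1$. That cannot work as stated: the local piece of the decomposition requires an $L^1\to L^1$ bound for $\M$ on the doubled ball, and $\M$ is not strong $(1,1)$ (only weak $(1,1)$, or strong from $L\log L$), so the Morrey exponent~$1$ endpoint with input and output both measured in $L^1$-averages is unavailable. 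The paper's route is different in the crucial place: it does \emph{not} apply H\"older to $G^p$ before taking the maximal function. Instead it applies H\"older directly to the quantity $\int_{B_R\cap\Omega}\M_\Omega(G^p)\,dx$ appearing in the $\calM^{1,\varphi^{p/q}}$ seminorm, splitting against $v^{1-l'}$ with $l=q/p$, and invokes \eqref{cond:wv} to produce
\[
\|\M_\Omega(G^p)\|_{\calM^{1,\varphi^{1/l}}(\Omega)}\ \leq\ [w,v^{1-l'}]_{A_l}^{1/l}\ \|\M_\Omega(G^p)\|_{\calM^{l,\hat\varphi}_v(\Omega)},\qquad \hat\varphi(B):=\frac{v(B)}{w(B)}\varphi(B).
\]
Only now is the maximal operator estimated, at exponent $l>1$ in the single-weight setting $v\in A_l$, via Lemma~\ref{lm:one-weight-case}, with the compatibility $\sup_{s\geq 2r}\phi(B_s)^{-1}\lesssim\hat\varphi(B_r)^{-1}$ checked from \eqref{cond:vwvarphi}, $\phi\in\mathcal B_0$, and the doubling of $w$. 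This H\"older-at-the-level-of-the-maximal-function step, which upgrades the exponent to $l>1$ before any maximal bound is used, is the idea missing from your plan; without it the $\calM^1$ endpoint does not go through. A secondary, smaller point: after \eqref{eq:1.3+3.4} one still must absorb $\|\nabla u\|_{L^p(\Omega)}$ into $C\|G\|_{\calM^{q,\phi}_v(\Omega)}$ (via the energy estimate and H\"older with $v\in A_{q/p}$, as in Corollary~\ref{cor:weighted}); your appeal to ``Sobolev/Poincar\'e-type control'' of a $|\psi|^p$ contribution via $\|\psi\|_{L^\infty}\leq M$ does not quite land, and in any case the relevant quantity in Theorem~\ref{thm:weighted-morrey} should be read as $|\nabla\psi|^p+|\F|^{p'}$, consistent with Theorems~\ref{thm:main} and~\ref{thm:localized-main}.
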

This result complements the global $C^{1,\alpha}$ regularity  developed  by Lieberman \cite{Li} for the corresponding homogeneous
equation when $(x, z)\mapsto \A(x,z,\cdot)$ is $C^\alpha$  and both $\partial \Omega$ and  $\psi$ belong to the class $C^{1,\alpha}$.
The boundedness assumption for $u$ in Theorems~\ref{thm:weighted-morrey} and \ref{thm:global-weighted-morrey}
is used to handle the dependence of the principal part on the 
solution itself. A simple inspection of our proofs reveals that this condition is not needed in the special case  $\A(x,z,\xi)=\A(x,\xi)$.
As far as we know all available global $W^{1,q}$ estimates for nonlinear equation of $p$-Laplacian type are only  established for identically zero Dirichlet boundary data. Our  boundary data  is general and the imposed condition on $\psi$ is optimal in view of the  linear case. We expect that our method of dealing with  nonhomogeneous boundary data can be useful for   other nonlinear elliptic  equations.

The central point in  proving the above main results is to be able to 
show that gradients of weak  solutions $u$ to equation \eqref{ME}  can be approximated
in an invariant way by  bounded gradients in $L^p$ norm (see
Proposition~\ref{lm:localized-compare-gradient}). In order to achieve this we encounter 
four  main difficulties: the discontinuity of $\A(x,z,\xi)$ in the $x$ variable and the dependence of $\A$ on the $z$ variable, the 
nonhomogeneous Dirichlet boundary data,
the roughness of the domain,  and   
the fact that  equations of the  form  \eqref{ME} are not  invariant with respect to dilations and rescaling of 
domains.
Using the method  in  \cite{CC,CP} and an argument in \cite{AM,BPS,NP}, we deal with the first two issues 
in Lemmas~\ref{lm:step1} and \ref{lm:step2} by 
freezing the $x$ and $z$ variables and 
comparing  solution $u$ of \eqref{ME}  to that of the corresponding frozen equation with zero Dirichlet boundary data.  
The third issue is handled  by a compactness argument allowing us to extract information from the limiting equations whose  domains 
have  flat boundaries, 
see the proofs of Lemma~\ref{lm:step3} and Lemma~\ref{lm:compactness}.
To overcome the last 
issue about the lack of the invariant structure, we use the key idea introduced in \cite{HNP1,NP} by  enlarging  the class of equations under 
consideration in a suitable way. Precisely, 
we  consider the associated quasilinear elliptic  equations with two parameters, i.e.  equation \eqref{GE} below.
The chief advantage  of working with \eqref{GE} is that 
equations of this form  are invariant with respect to dilations and rescaling 
of domains. However, there arise new difficulties in dealing with 
the parameters. It is essential for the success of  our analysis   that  
all the  obtained approximation estimates and involving  constants  must be independent of the two parameters. The large part of this paper is devoted to achieving this.

\bigskip

The organization of the paper is as follows. We recall some preliminary results
in Subsections~\ref{sub:weight}--\ref{sub:classical}  and state two key regularity results (Theorems~\ref{thm:main} and \ref{thm:localized-main})
in Subsection~\ref{sub:key-reg}
about gradient estimates in weighted $L^q$ spaces.  Section~\ref{approximation-gradient} is devoted to proving
Proposition~\ref{lm:localized-compare-gradient}
which shows  that gradients of weak solutions to  two-parameter equation  \eqref{GE} can be approximated by bounded gradients in a small neighborhood
of any point in the domain. Using this crucial result,
we establish  in Subsection~\ref{sub:density-est} some density estimates 
for gradients and then prove Theorems~\ref{thm:main} and \ref{thm:localized-main} in Subsection~\ref{sub:Lebesgue-Spaces}. 
Finally, the main results stated in Theorems~\ref{thm:weighted-morrey}
 and \ref{thm:global-weighted-morrey} are derived in Subsection~\ref{sec:Morrey-Spaces} as consequences of Theorem~\ref{thm:main}.
 
\section{Preliminaries and  key regularity results}\label{sec:preliminary}
\subsection{Basic properties of $A_s$ weights and  estimates for the maximal function}\label{sub:weight}
 Given a weight $w$ and a measurable set $E\subset \R^n$, we use the notation $dw(x)= w(x)\, dx$ and  $w(E)=\int_E w(x)\, dx$.

\begin{lemma}\label{weight:basic-pro} 
Let $w\in A_s$ for some $1< s <\infty$. Then
there exist $0<\beta \leq 1$ and $K>0$ depending only on $n$ and $[w]_{A_s}$ such that 
\begin{equation}\label{strong-doubling}
[w]_{A_s}^{-1}\, \Big(\frac{|E|}{|B|}\Big)^s \leq  \frac{w(E)}{w(B)}\leq K\,  \Big(\frac{|E|}{|B|}\Big)^\beta
\end{equation}
  for all balls $B$ and all measurable sets $E\subset  B$. In particular, $w$ is doubling with
   $w(2B)\leq 2^{ns} [w]_{A_s}  w(B)$.
 \end{lemma}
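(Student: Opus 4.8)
The plan is to prove the two-sided comparison \eqref{strong-doubling} separately, as these are classical facts about Muckenhoupt weights that I would reconstruct directly from the $A_s$ condition. For the left-hand (lower) bound, I would start from the definition of $[w]_{A_s}$ applied to the ball $B$, giving
\[
\Big(\fint_{B} w\,dx\Big)\Big(\fint_{B} w^{\frac{-1}{s-1}}\,dx\Big)^{s-1}\le [w]_{A_s}.
\]
Then, for $E\subset B$, I would write $|E|=\int_E 1\,dx=\int_E w^{1/s}w^{-1/s}\,dx$ and apply H\"older's inequality with exponents $s$ and $s'=s/(s-1)$:
\[
|E|\le \Big(\int_E w\,dx\Big)^{1/s}\Big(\int_E w^{\frac{-1}{s-1}}\,dx\Big)^{(s-1)/s}
\le \Big(\int_E w\,dx\Big)^{1/s}\Big(\int_B w^{\frac{-1}{s-1}}\,dx\Big)^{(s-1)/s}.
\]
Raising to the power $s$, dividing by $|B|^s$, and inserting the $A_s$ bound in the form $\int_B w^{\frac{-1}{s-1}}\,dx\le [w]_{A_s}^{\frac{1}{s-1}}|B|^{s/(s-1)}\big(\fint_B w\,dx\big)^{-1/(s-1)}=[w]_{A_s}^{\frac{1}{s-1}}|B|\,w(B)^{-1/(s-1)}$, the algebra collapses to $\big(|E|/|B|\big)^s\le [w]_{A_s}\, w(E)/w(B)$, which is the claimed lower bound.

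For the right-hand (upper) bound, the idea is to exploit the reverse H\"older inequality enjoyed by $A_s$ weights: there exist $\gamma>1$ and $C$ (depending only on $n$, $s$, $[w]_{A_s}$) with $\big(\fint_B w^{\gamma}\,dx\big)^{1/\gamma}\le C\fint_B w\,dx$ for all balls $B$. Granting this, for $E\subset B$ I apply H\"older with exponents $\gamma$ and $\gamma'$:
\[
w(E)=\int_E w\,dx\le |E|^{1/\gamma'}\Big(\int_B w^{\gamma}\,dx\Big)^{1/\gamma}
\le |E|^{1/\gamma'}\,|B|^{1/\gamma}\, C\,\fint_B w\,dx
= C\,\Big(\frac{|E|}{|B|}\Big)^{1/\gamma'} w(B),
\]
so one takes $\beta=1/\gamma'=1-1/\gamma\in(0,1)$ and $K=C$. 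The doubling statement $w(2B)\le 2^{ns}[w]_{A_s}w(B)$ is then a one-line consequence of the lower bound with $E=B$ and the ball $2B$ in the role of the generic ball: $\big(|B|/|2B|\big)^s=2^{-ns}\le [w]_{A_s}\,w(B)/w(2B)$.

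The main obstacle is the reverse H\"older inequality, which is the only non-elementary input. I would cite it as a standard fact (Gehring's lemma / Coifman–Fefferman theory, see e.g. the references on Muckenhoupt weights), noting that the exponent $\gamma$ and the constant depend only on $n$ and $[w]_{A_s}$; alternatively one can derive the upper bound in \eqref{strong-doubling} directly from the $A_\infty$ characterization via a Calder\'on–Zygmund stopping-time argument, but invoking reverse H\"older is cleanest and keeps the dependence of $\beta$ and $K$ transparently on $n$ and $[w]_{A_s}$ alone. Everything else is just bookkeeping with H\"older's inequality and the definition of $[w]_{A_s}$.
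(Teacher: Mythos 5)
The paper does not prove this lemma; it is stated as a classical fact about Muckenhoupt weights, and your reconstruction is the standard argument (H\"older plus the $A_s$ condition for the lower bound, reverse H\"older plus H\"older for the upper bound, doubling via $E=B\subset 2B$), which is correct. One typographical slip: in your displayed estimate for $\int_B w^{-1/(s-1)}\,dx$, the exponents on $|B|$ are transposed—the correct chain is
\begin{equation*}
\int_B w^{\frac{-1}{s-1}}\,dx \le [w]_{A_s}^{\frac{1}{s-1}}\,|B|\,\Big(\fint_B w\,dx\Big)^{\frac{-1}{s-1}}
= [w]_{A_s}^{\frac{1}{s-1}}\,|B|^{\frac{s}{s-1}}\, w(B)^{\frac{-1}{s-1}},
\end{equation*}
but the final inequality $\big(|E|/|B|\big)^s \le [w]_{A_s}\, w(E)/w(B)$ still comes out as you claim, so the argument is sound.
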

  
 \begin{lemma}[Characterizations of $A_\infty$ weights] Suppose that $w$ is a weight. Then $w$ is in $A_\infty$ 
 if and only if there exist $0<A, \, \nu<\infty$ such that  for all balls $B$ and all measurable sets $E\subset  B$  we have
\begin{equation}\label{charac-A-infty}
\frac{w(E)}{w(B)}\leq A \Big(\frac{|E|}{|B|}\Big)^{\nu}.
\end{equation}
When $w\in   A_\infty$,  the above constants $A$ and $\nu$ depend only on $n$ and  $[w]_{A_\infty}$. Conversely, given constants $A$ and $\nu$  satisfying  \eqref{charac-A-infty},  we have 
$[w]_{A_\infty}\leq C(n,A,\nu)$.
\end{lemma}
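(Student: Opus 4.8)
\smallskip
\noindent\emph{Sketch of proof.}

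The plan is to prove the two implications separately, tracking the dependence of all constants. For the forward implication, assume $w\in A_\infty$. The quickest route uses the classical fact that a finite exp--log constant forces $w\in A_s$ for some finite exponent $s=s(n,[w]_{A_\infty})$ with $[w]_{A_s}\le C(n,[w]_{A_\infty})$; granting this, the right-hand inequality in \eqref{strong-doubling} of Lemma~\ref{weight:basic-pro} is exactly \eqref{charac-A-infty}, with $A=K$ and $\nu=\beta$ depending only on $n$ and $[w]_{A_s}$, hence only on $n$ and $[w]_{A_\infty}$. For a self-contained argument I would instead derive from $[w]_{A_\infty}<\infty$ a reverse Hölder inequality $\big(\fint_B w^{1+\e}\big)^{\frac1{1+\e}}\le C\fint_B w$ with $\e,C$ depending only on $n,[w]_{A_\infty}$ (Gehring-type self-improvement), and then, for $E\subset B$, apply Hölder's inequality:
\[
\frac{w(E)}{|B|}=\fint_B \chi_E\, w\, dx\le \Big(\fint_B w^{1+\e}\, dx\Big)^{\frac{1}{1+\e}}\Big(\frac{|E|}{|B|}\Big)^{\frac{\e}{1+\e}}\le C\,\Big(\fint_B w\, dx\Big)\,\Big(\frac{|E|}{|B|}\Big)^{\frac{\e}{1+\e}},
\]
which is \eqref{charac-A-infty} with $\nu=\e/(1+\e)$.

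For the converse, assume \eqref{charac-A-infty} holds with constants $A,\nu$. I would first upgrade \eqref{charac-A-infty} to a reverse Hölder inequality by a layer-cake computation: fix a ball $B$, set $\lambda_0=\fint_B w$, observe that $|\{x\in B:\ w(x)>t\}|\le \lambda_0|B|/t$ by Chebyshev, so that $|\{w>t\}|/|B|<1$ for $t>\lambda_0$ and \eqref{charac-A-infty} gives $w(\{x\in B:\ w>t\})\le A(\lambda_0/t)^{\nu}\,w(B)$ there; then, for $0<\e<\nu$,
\[
\int_B w^{1+\e}\, dx=\e\int_0^\infty t^{\e-1}\,w(\{x\in B:\ w>t\})\, dt\le w(B)\,\lambda_0^{\e}\Big(1+\frac{\e A}{\nu-\e}\Big),
\]
and dividing by $|B|$ yields $\fint_B w^{1+\e}\le C(A,\nu)\,(\fint_B w)^{1+\e}$ after fixing $\e=\nu/2$. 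Since a reverse Hölder inequality valid on every ball is one of the standard characterizations of $A_\infty$, it remains to pass quantitatively to the exp--log bound, i.e.\ to $\log(\fint_B w)-\fint_B\log w=\fint_B\log(\fint_B w/w)\,dx\le C(n,A,\nu)$. Using $\fint_B\log(\fint_B w/w)\,dx\le \fint_B\log^+(\fint_B w/w)\,dx=\int_0^\infty \big|\{x\in B:\ w(x)<e^{-s}\fint_B w\}\big|\,|B|^{-1}\, ds$, this reduces to showing that the level sets $\{x\in B:\ w<\mu\,\fint_B w\}$ become small at a controlled geometric rate as $\mu\downarrow 0$; one obtains this from the reverse Hölder inequality (equivalently, directly from \eqref{charac-A-infty}) by a Calder\'on--Zygmund stopping-time iteration over sub-balls on which $w$ is small, with all constants depending only on $n$, $A$, $\nu$. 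This then gives $[w]_{A_\infty}\le e^{C(n,A,\nu)}$.

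The main obstacle is the self-improvement step that appears in both directions: the quantitative reverse Hölder (Gehring) inequality for $A_\infty$ weights in the forward direction, and the equivalent John--Nirenberg/Calder\'on--Zygmund iteration giving geometric smallness of the sub-level sets in the converse — together with the bookkeeping required to keep every constant a function of $n,[w]_{A_\infty}$ (respectively $n,A,\nu$) alone. Everything else (Hölder, Chebyshev, the layer-cake identities, Jensen) is routine. As both implications are entirely classical, in the final write-up I would either reproduce the stopping-time proof of the reverse Hölder inequality or simply cite a standard reference.
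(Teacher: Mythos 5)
The paper does not prove this lemma at all: it is stated in Subsection~2.1 as a standard background characterization of $A_\infty$ (it is the classical Muckenhoupt/Coifman--Fefferman equivalence between the Fujii--Wilson/Hru\v{s}\v{c}ev constant $[w]_{A_\infty}$ and the ``$w(E)/w(B)\le A(|E|/|B|)^\nu$'' condition, see e.g.\ \cite{Gra}). So there is no in-paper argument to compare against; what you have written is a sketch of the classical proof, and the question is whether it would hold up if fleshed out.

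The overall plan is sound. The forward direction via Lemma~\ref{weight:basic-pro} (after upgrading $A_\infty$ to some $A_s$) is exactly what the paper's own conventions make available; the alternative forward route via reverse H\"older plus H\"older's inequality is also standard. For the converse, your layer-cake computation from \eqref{charac-A-infty} to the reverse H\"older estimate $\fint_B w^{1+\e}\le C(A,\nu)\big(\fint_B w\big)^{1+\e}$ (for $\e<\nu$) is correct, and the reduction of the exp--log bound to geometric decay of the sub-level sets $\{x\in B:\, w(x)<\mu\,\fint_B w\}$ via $\fint_B\log^+(\fint_B w/w)$ is the right bookkeeping. You are right to flag the Calder\'on--Zygmund stopping-time iteration as the genuine technical core in both directions, and right that it is \emph{not} optional: \eqref{charac-A-infty} applied once only yields a fixed-proportion statement (a set of relative measure $\ge\alpha$ carries a fixed proportion $\ge\beta$ of $w(B)$), while the exp--log bound requires \emph{geometric} decay, which comes only from iterating over a stopping-time family of sub-balls. (Similarly, on the forward side, the raw $[w]_{A_\infty}$ bound directly gives only $|\{w<\mu\,\fint_B w\}|/|B|\lesssim (\log[w]_{A_\infty}+1)/\log(1/\mu)$ for $\mu$ small, which is logarithmic rather than geometric; the Gehring self-improvement is what upgrades this.) One small suggestion: your parenthetical ``(equivalently, directly from \eqref{charac-A-infty})'' is fine as written, but in the write-up it would be cleaner to commit to one of the two starting points for the iteration, since you already have both \eqref{charac-A-infty} and the derived reverse H\"older in hand at that stage. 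With the iteration supplied (or a precise citation to a standard reference for the equivalence of $A_\infty$ characterizations, as the paper itself implicitly does), the argument is complete and the stated dependence of the constants is obtained.
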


Let $\tilde \M$ denotes  the uncentered Hardy--Littlewood   maximal operator (see Definition~3.1 in \cite{DiN}). 
For two weights $w_1$ and $w_2$, let
\begin{align*}
  [w_1, w_2]_{A_q}
  &:= \sup_{B}{ \Big(\fint_{B} w(x)\, dx\Big) \Big(\fint_{B} w_2\, dx\Big)^{q-1}  },\\
   [w_1, w_2]_{S_q}   &:= \sup_{B}\Bigg(\frac{1}{w_2(B)}\int_{B}[ \tilde\M(w_2 \chi_B)]^q w_1\, dx \Bigg)^{\frac1q}. 
 \end{align*}
Then  $[w_1, w_2]_{A_q}\leq [w, w_2]_{S_q}^q$.
Moreover, we have the following estimates from \cite{DiN} for the maximal function in weighted Morrey spaces. 
 \begin{lemma}[Corollary~3.6 in \cite{DiN}] \label{lm:two-weight-case} Let $w,\, v$ be two weights and $\varphi,\, \phi$ be two positive functions on the set of nonempty open balls in  $\R^n$.
Let $1<q<\infty$ and $U\subset \R^n$ be a bounded open set. Assume that  $w$ is doubling and there exists a constant $C_*>0$ such that
\begin{equation}\label{wmu1-vmu2-sufficient}
\sup_{2r \leq s\leq 2\diam(U)}  \frac{v(B_s(y))}{w(B_s(y))} \frac{1}{\phi(B_s(y))} \leq C_* \, \frac{1}{\varphi(B_r(y))} \quad \mbox{for all $y\in U$
and $0<r\leq \diam(U)$}. 
 \end{equation}
Assume in addition that one of the following two conditions is satisfied:
\begin{enumerate}
 \item[({\bf A})] There exists $r>1$ such that
 $\displaystyle\quad 
   \sup_{B}{ \Big(\fint_{B} w\, dx\Big) \Big(\fint_{B} v^{r(1-q')}\, dx\Big)^{\frac{q-1}{r}}  }<\infty$.
  \item[({\bf B})] $[w, v^{1-q'}]_{A_q}<\infty$
and $v^{1-q'}\in A_\infty$. 
\end{enumerate}
Then $[w, v^{1-q'}]_{S_q}<\infty$ and there exists a constant $C>0$ depending only on $n$, $q$, $C_*$, the doubling constant for $w$, and 
$[w, v^{1-q'}]_{S_q}$  such that
\begin{equation*}
 \|\tilde\M_U(f)\|_{\calM^{q,\varphi}_w(U)}\leq C \|f\|_{\calM^{q,\phi}_v(U)}
 \qquad \forall f\in L^1(U).
 \end{equation*}
\end{lemma}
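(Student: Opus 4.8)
The plan is to argue in two stages: first establish that either hypothesis (A) or (B) forces the Sawyer testing constant $[w,v^{1-q'}]_{S_q}$ to be finite, and then deduce the weighted Morrey bound for $\tilde\M_U$ from this testing condition together with the geometric hypothesis \eqref{wmu1-vmu2-sufficient} and the doubling of $w$. Throughout write $\sigma:=v^{1-q'}$ and recall the elementary inequality $[w,\sigma]_{A_q}\le[w,\sigma]_{S_q}^q$ noted above.

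\textbf{Stage 1 (finiteness of $[w,\sigma]_{S_q}$).} I would first observe that (B) reduces to (A): since $\sigma\in A_\infty$, $\sigma$ satisfies a reverse Hölder inequality, so there is $r>1$ with $\big(\fint_B\sigma^r\big)^{1/r}\le C\,\fint_B\sigma$ for every ball $B$, with $C,r$ depending only on $n$ and $[\sigma]_{A_\infty}$; raising this to the power $q-1$ and multiplying by $\fint_B w$ turns the finiteness of $[w,\sigma]_{A_q}$ into exactly the power-bump bound of (A). It therefore suffices to prove the claim under (A). Under (A) the assertion $[w,\sigma]_{S_q}<\infty$ says that $\int_B[\tilde\M(\sigma\chi_B)]^q\,w\,dx\le C\,\sigma(B)$ uniformly in $B$, and this is the classical fact that a power bump of the off-diagonal pair $(w,\sigma)$ implies the two-weight strong $(q,q)$ inequality $\|\tilde\M(g\sigma)\|_{L^q_w(\R^n)}\le C\|g\|_{L^q_\sigma(\R^n)}$ (Neugebauer/P\'erez; proved by a Calder\'on--Zygmund stopping-time decomposition and Hölder's inequality against the bumped factor, using $\tilde\M:L^{r'}\to L^{r'}$). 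Testing this inequality with $g=\chi_B$ gives precisely the $S_q$ estimate, with a constant depending only on $n$, $q$, $r$ and the bump constant, hence ultimately only on the quantities listed in the statement.

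\textbf{Stage 2 (the Morrey bound).} Granting $[w,\sigma]_{S_q}<\infty$, Sawyer's two-weight characterization yields the global bound $\|\tilde\M h\|_{L^q_w(\R^n)}\le C\|h\|_{L^q_v(\R^n)}$ with $C=C(n,q,[w,\sigma]_{S_q})$. Fix $y\in U$ and $0<r\le\diam(U)$, and split $f=f\chi_{B_{2r}(y)}+f\chi_{U\setminus B_{2r}(y)}=:f_1+f_2$. For the near piece, $\tilde\M_Uf_1\le\tilde\M(f_1\chi_U)$ on $U$, so the global inequality and the definition of the Morrey norm give
\[
\frac{\varphi(B_r(y))}{w(B_r(y))}\int_{B_r(y)\cap U}|\tilde\M_Uf_1|^q w\,dx\;\le\;C\,\frac{\varphi(B_r(y))}{w(B_r(y))}\,\frac{v(B_{2r}(y))}{\phi(B_{2r}(y))}\,\|f\|_{\calM^{q,\phi}_v(U)}^q ,
\]
and the prefactor is $\le C_wC_*$ by \eqref{wmu1-vmu2-sufficient} with $s=2r$ together with $w(B_{2r}(y))\le C_w\,w(B_r(y))$. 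For the far piece I would use the standard observation that if $x\in B_r(y)\cap U$ and a ball $B\ni x$ has $\fint_B|f_2|>0$, then $B$ meets $U\setminus B_{2r}(y)$, forcing $\mathrm{rad}(B)\ge r/2$ and $B\subset B_{c\,\mathrm{rad}(B)}(y)$ for a dimensional $c\ge 4$; hence $\tilde\M_Uf_2(x)\le C\sup_{\rho\ge r/2}|B_\rho(y)|^{-1}\int_{B_{c\rho}(y)\cap U}|f|$. Estimating each such average by Hölder against $v$ and $\sigma$, using $\big(\int_{B_{c\rho}(y)\cap U}|f|^qv\big)^{1/q}\le\big(v(B_{c\rho}(y))/\phi(B_{c\rho}(y))\big)^{1/q}\|f\|_{\calM^{q,\phi}_v(U)}$ and $\big(\fint_{B_{c\rho}(y)}\sigma\big)^{1/q'}\le[w,\sigma]_{A_q}^{1/q}\big(\fint_{B_{c\rho}(y)}w\big)^{-1/q}$, the powers of $\rho$ cancel, and one is left with $\tilde\M_Uf_2(x)\le C\,[w,\sigma]_{A_q}^{1/q}\sup_{\rho\ge r/2}\big(v(B_{c\rho}(y))/(w(B_{c\rho}(y))\phi(B_{c\rho}(y)))\big)^{1/q}\|f\|_{\calM^{q,\phi}_v(U)}$, which by \eqref{wmu1-vmu2-sufficient} (with $s=c\rho$, or $s=2\diam(U)$ when $c\rho$ exceeds this) is $\le C\big(C_*/\varphi(B_r(y))\big)^{1/q}[w,\sigma]_{A_q}^{1/q}\|f\|_{\calM^{q,\phi}_v(U)}$. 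Integrating the $q$-th power over $B_r(y)\cap U$ against $w$ and multiplying by $\varphi(B_r(y))/w(B_r(y))$ kills the remaining $\varphi$ factor; adding the two contributions and taking the supremum over $y$ and $r$ yields the claimed estimate.

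\textbf{Main obstacle.} The delicate point is Stage 1: the quantitative passage from the bump hypothesis to the testing condition, and making sure the $S_q$ constant --- and hence every constant in the final inequality --- depends only on the admissible parameters. This forces one to track the reverse-Hölder exponent produced by $A_\infty$ in case (B) and the $L^{r'}$-operator norm of $\tilde\M$ in case (A). The far-field estimate in Stage 2 also needs care, since it mixes the purely geometric hypothesis \eqref{wmu1-vmu2-sufficient} with the analytic pair bound $[w,\sigma]_{A_q}\le[w,\sigma]_{S_q}^q$, and one must keep the dilation constant $c$ from the covering argument inside the range $2r\le s\le2\diam(U)$ in which \eqref{wmu1-vmu2-sufficient} is assumed.
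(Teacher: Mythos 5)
The paper does not contain a proof of this lemma: it is quoted verbatim as Corollary~3.6 of the companion paper \cite{DiN}, so there is no in-paper argument to compare against. Evaluated on its own terms, your two-stage reconstruction is correct and is the natural argument for a result of this type. Stage~1 is sound: reverse H\"older turns (B) into a power bump (A), and P\'erez's bump theorem for the maximal function combined with the trivial direction of Sawyer's characterization (test with $g=\chi_B$) gives $[w,v^{1-q'}]_{S_q}<\infty$ with a quantitative constant. Stage~2 is the standard near/far splitting for Morrey bounds of the maximal function: the global two-weight $L^q$ inequality (again by Sawyer) handles the local piece after one doubling step and an application of \eqref{wmu1-vmu2-sufficient} at scale $s=2r$, while the far piece is controlled pointwise by H\"older against $v$ and $\sigma=v^{1-q'}$, at which point $[w,\sigma]_{A_q}\le[w,\sigma]_{S_q}^q$ and \eqref{wmu1-vmu2-sufficient} at the dilated scale collapse the $\rho$-dependence. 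Your computation of the cancellation of the $|B_{c\rho}(y)|$ factors in the far estimate is correct.

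Two bookkeeping points should be made explicit if this were to be written out in full. First, in the far estimate the dilated scale $s=c\rho$ can exceed $2\diam(U)$; since $f_2$ is supported in $U$, averages over balls of radius $\gtrsim\diam(U)$ containing a point of $B_r(y)\cap U$ stabilize, so one may truncate $\rho$ at a fixed multiple of $\diam(U)$ and then invoke \eqref{wmu1-vmu2-sufficient} at $s=2\diam(U)$ together with the doubling of $w$ to absorb the finitely many extra doublings. Second, the Morrey norm $\|f\|_{\calM^{q,\phi}_v(U)}$ is defined with balls centered in $U$ of radius at most $\diam(U)$; you already re-center at $y\in U$, so only the radius cap needs the same truncation. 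Neither issue is a real obstruction, and the final constant depends only on $n$, $q$, $C_*$, the doubling constant of $w$, and $[w,v^{1-q'}]_{S_q}$, as stated.
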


\begin{lemma}[Corollary~3.7 in \cite{DiN}] \label{lm:one-weight-case}  Let $1<q<\infty$, $w\in A_q$, and $U\subset \R^n$ be a bounded open set.
Assume that $\varphi$ and $\phi$ are two positive functions on the set of  open balls in  $\R^n$
such that there exists  $C_*>0$ satisfying
\begin{equation*}\label{mu1-mu2-sufficient}
\sup_{2r \leq s\leq 2\diam(U)}  
\phi(B_s(y))^{-1} \leq C_* \, \varphi(B_r(y))^{-1} \quad \mbox{for all $y\in U$
and $0<r\leq  \diam(U)$}. 
 \end{equation*}
Then there exists  a constant $C>0$ depending only on $n$, $q$, $C_*$, and $[w]_{A_q}$ such that
\begin{equation*}
 \|\tilde\M_U(f)\|_{\calM^{q,\varphi}_w(U)}\leq C \|f\|_{\calM^{q,\phi}_w(U)}\quad\mbox{for any } f\in L^1(U).
 \end{equation*}
\end{lemma}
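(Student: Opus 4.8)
\emph{Proof plan.} The assertion is exactly the single‑weight specialization of Lemma~\ref{lm:two-weight-case}, so the plan is to invoke that lemma with the choice $v=w$ and to check its hypotheses. First note that $w\in A_q\subset A_\infty$ is doubling, with $w(2B)\le 2^{nq}[w]_{A_q}\,w(B)$ for every ball $B$ by Lemma~\ref{weight:basic-pro}; in particular the doubling constant of $w$ is controlled by $n$, $q$ and $[w]_{A_q}$. Next, with $v=w$ the quotient $v(B_s(y))/w(B_s(y))$ equals $1$, so the geometric hypothesis \eqref{wmu1-vmu2-sufficient} of Lemma~\ref{lm:two-weight-case} reduces to
\[
\sup_{2r\le s\le 2\diam(U)}\phi(B_s(y))^{-1}\le C_*\,\varphi(B_r(y))^{-1}\qquad\text{for all }y\in U,\ 0<r\le\diam(U),
\]
which is precisely the condition imposed in the statement. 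Thus the only point that requires a nonformal argument is the verification of condition~(\textbf{A}) of Lemma~\ref{lm:two-weight-case} in the case $v=w$, namely the existence of some $r>1$ with $\sup_{B}\big(\fint_B w\big)\big(\fint_B w^{r(1-q')}\big)^{(q-1)/r}<\infty$.

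To produce such an $r$ I would use the self‑improving property of Muckenhoupt weights. Since $w\in A_q$, the dual weight $\sigma:=w^{1-q'}=w^{-1/(q-1)}$ belongs to $A_{q'}$ (with $[\sigma]_{A_{q'}}=[w]_{A_q}^{q'-1}$), hence to $A_\infty$, and therefore satisfies a reverse Hölder inequality: there exist $\tau>1$ and $C_1\ge 1$, both depending only on $n$, $q$ and $[w]_{A_q}$, such that $\big(\fint_B\sigma^{\tau}\big)^{1/\tau}\le C_1\fint_B\sigma$ for every ball $B$. Taking $r=\tau$ gives $\big(\fint_B w^{r(1-q')}\big)^{1/r}\le C_1\fint_B w^{1-q'}$, whence
\[
\Big(\fint_B w\Big)\Big(\fint_B w^{r(1-q')}\Big)^{\frac{q-1}{r}}\le C_1^{\,q-1}\Big(\fint_B w\Big)\Big(\fint_B w^{1-q'}\Big)^{q-1}\le C_1^{\,q-1}\,[w]_{A_q}
\]
uniformly in $B$, so (\textbf{A}) holds with the indicated supremum bounded by $C(n,q,[w]_{A_q})$. (Equivalently one may invoke the openness of the $A_q$ scale, $w\in A_{q-\e}$ for some small $\e>0$, which is the same fact phrased differently.)

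With (\textbf{A}) established, Lemma~\ref{lm:two-weight-case} applied with $v=w$ yields $[w,w^{1-q'}]_{S_q}<\infty$ together with the desired inequality $\|\tilde\M_U(f)\|_{\calM^{q,\varphi}_w(U)}\le C\|f\|_{\calM^{q,\phi}_w(U)}$ for all $f\in L^1(U)$, where $C$ depends only on $n$, $q$, $C_*$, the doubling constant of $w$, and $[w,w^{1-q'}]_{S_q}$. By the previous paragraph the last two quantities are in turn controlled by $n$, $q$ and $[w]_{A_q}$ (the bound on $[w,w^{1-q'}]_{S_q}$ coming from the supremum in~(\textbf{A}), itself $\le C_1^{q-1}[w]_{A_q}$), so $C=C(n,q,C_*,[w]_{A_q})$ as claimed. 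I expect the only genuine (though entirely standard) obstacle to be the reverse Hölder step used to locate $r>1$ with quantitative control of $\tau$ and $C_1$; the rest is bookkeeping of the hypotheses of Lemma~\ref{lm:two-weight-case}. A self‑contained alternative would be to combine Muckenhoupt's weighted strong $(q,q)$ bound for $\tilde\M$ with an $A_q$ weight with the localization $f=f\chi_{B_{2r}(\bar x)}+f\chi_{U\setminus B_{2r}(\bar x)}$ at each ball $B_r(\bar x)$ entering the Morrey norm, treating the tail via the pointwise estimate $\tilde\M_U\big(f\chi_{U\setminus B_{2r}(\bar x)}\big)(x)\lesssim\sup_{s\ge r}\fint_{B_s(\bar x)\cap U}|f|$ for $x\in B_r(\bar x)$ and then using the doubling of $w$ together with the hypothesis on $\varphi,\phi$; but the reduction above is shorter and cleaner.
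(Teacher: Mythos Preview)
Your proposal is correct and matches the intended derivation: the paper imports this result from \cite{DiN} without proof, and the numbering there (Corollary~3.7 immediately following Corollary~3.6, the latter being Lemma~\ref{lm:two-weight-case} here) indicates that the one-weight case is indeed obtained by specializing the two-weight lemma to $v=w$, exactly as you do. As a minor simplification, note that condition~(\textbf{B}) of Lemma~\ref{lm:two-weight-case} is even quicker to verify than~(\textbf{A}) in this situation, since $[w,w^{1-q'}]_{A_q}=[w]_{A_q}$ by definition and $w^{1-q'}\in A_{q'}\subset A_\infty$ is the standard duality of Muckenhoupt classes---this bypasses the reverse H\"older step entirely while still giving quantitative control in terms of $[w]_{A_q}$.
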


\subsection{An energy inequality and some classical regularity estimates}\label{sub:classical} 
Let us consider the following equation
\begin{equation}\label{GE}
 \left\{
 \begin{alignedat}{2}
  &\div  \Big[\frac{ \A(x, \lambda\theta  u, \lambda \nabla u)}{\lambda^{p-1}}\Big]=  \div \F \quad \mbox{in}\quad \Omega,\\\
&u=\psi \qquad\qquad\qquad \qquad \qquad\mbox{ on}\quad \partial\Omega,
 \end{alignedat} 
  \right.
\end{equation}
where $\lambda,\, \theta>0$ are two parameters.
We will use the fact (see  the proof of  \cite[Lemma~1]{T}) that condition \eqref{structural-reference-1}
  implies that  
  \begin{align}\label{structural-consequence}
&\big\langle  \A(x,z,\xi) -\A(x,z,\eta), \xi-\eta\big\rangle \geq 
\left \{
\begin{array}{lcll}
  4^{1-p}\Lambda^{-1} |\xi-\eta|^p &\text{if}\quad p\geq 2,\\
4^{-1}\Lambda^{-1} \big(|\xi| +|\eta|)^{p-2} |\xi-\eta|^2 &\quad\,\,\text{ if}\quad 1 <p<2
\end{array}\right. 
\end{align}
for a.e. $x\in \Omega$, all $z\in \overline\K$, and all $\xi,\, \eta\in \R^n$.

\begin{proposition}[energy estimate]\label{prop:energy}
Let $\psi \in W^{1,p}(\Omega)$ and $u$ be a weak solution of \eqref{GE}. Then we have 
\begin{equation}\label{eq:energy}
\int_{\Omega} |\nabla u|^p\, dx \leq C(p,n,\Lambda)   \int_{\Omega} \big(|\nabla \psi|^p+ |\F|^{p'}\big)\, dx.
\end{equation}
\end{proposition}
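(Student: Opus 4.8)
The plan is to use $u-\psi$ as a test function in the weak formulation of \eqref{GE}. Since $\psi\in W^{1,p}(\Omega)$ and $u$ is a weak solution with $u=\psi$ on $\partial\Omega$, the difference $u-\psi$ lies in $W^{1,p}_0(\Omega)$ and is therefore an admissible test function. This yields the identity
\[
\int_\Omega \Big\langle \frac{\A(x,\lambda\theta u,\lambda\nabla u)}{\lambda^{p-1}}, \nabla u - \nabla\psi\Big\rangle\, dx
= \int_\Omega \langle \F, \nabla u - \nabla\psi\rangle\, dx.
\]
Rearranging, the key quantity to bound from below is $\int_\Omega \big\langle \lambda^{1-p}\A(x,\lambda\theta u,\lambda\nabla u), \nabla u\big\rangle\, dx$.

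Next I would produce the coercivity lower bound. Writing $z = \lambda\theta u$ (a fixed value of the $z$-slot for each $x$) and applying \eqref{structural-consequence} with $\xi = \lambda\nabla u$ and $\eta = 0$, together with $\A(x,z,0)=0$ (which follows from \eqref{structural-reference-2}), we get
\[
\big\langle \A(x,\lambda\theta u,\lambda\nabla u), \lambda\nabla u\big\rangle \geq 4^{1-p}\Lambda^{-1}\,|\lambda\nabla u|^p = 4^{1-p}\Lambda^{-1}\lambda^p|\nabla u|^p
\]
in the case $p\geq 2$, and the analogous $4^{-1}\Lambda^{-1}(|\lambda\nabla u|)^{p-2}|\lambda\nabla u|^2 = 4^{-1}\Lambda^{-1}\lambda^p|\nabla u|^p$ in the case $1<p<2$. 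Dividing by $\lambda^p$ shows $\langle \lambda^{1-p}\A(x,\lambda\theta u,\lambda\nabla u),\nabla u\rangle \geq c(p,\Lambda)\,|\nabla u|^p$ in both ranges. For the remaining terms I would use \eqref{structural-reference-2} to get the upper bound $|\lambda^{1-p}\A(x,\lambda\theta u,\lambda\nabla u)| \leq \Lambda|\nabla u|^{p-1}$, so that
\[
\Big|\int_\Omega \big\langle \lambda^{1-p}\A(x,\lambda\theta u,\lambda\nabla u), \nabla\psi\big\rangle\, dx\Big|
\leq \Lambda\int_\Omega |\nabla u|^{p-1}|\nabla\psi|\, dx,
\]
and the right-hand side of the test identity is controlled by $\int_\Omega |\F|(|\nabla u| + |\nabla\psi|)\, dx$.

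Finally I would absorb the $|\nabla u|$ terms. Collecting the above,
\[
c(p,\Lambda)\int_\Omega |\nabla u|^p\, dx \leq \Lambda\int_\Omega |\nabla u|^{p-1}|\nabla\psi|\, dx + \int_\Omega |\F|\,|\nabla u|\, dx + \int_\Omega |\F|\,|\nabla\psi|\, dx.
\]
Applying Young's inequality with exponents $(p,p')$ to each of the first two integrals on the right — namely $|\nabla u|^{p-1}|\nabla\psi| \leq \e|\nabla u|^p + C_\e|\nabla\psi|^p$ and $|\F|\,|\nabla u| \leq \e|\nabla u|^p + C_\e|\F|^{p'}$ — and choosing $\e$ small enough (depending on $p,\Lambda$) to absorb the $\e\int_\Omega|\nabla u|^p$ contributions into the left-hand side, and bounding the harmless cross term $|\F|\,|\nabla\psi| \leq |\F|^{p'} + |\nabla\psi|^p$, gives exactly \eqref{eq:energy} with a constant $C=C(p,n,\Lambda)$ (in fact no $n$-dependence is needed, but stating it as such is harmless). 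The only mildly delicate point is the admissibility of $u-\psi$ as a test function and the vanishing $\A(x,z,0)=0$; both are routine consequences of the hypotheses, so I do not expect any genuine obstacle here — this is a standard energy estimate, and the presence of the parameters $\lambda,\theta$ is immaterial because they cancel after dividing by $\lambda^{p-1}$ and $\lambda$.
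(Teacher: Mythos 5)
Your proof is correct and follows essentially the same route as the paper: test with $u-\psi$, derive the coercivity $\langle \A(x,z,\xi), \xi\rangle \geq c(p,\Lambda)|\xi|^p$ from \eqref{structural-consequence} with $\eta=0$ (using $\A(x,z,0)=0$, which indeed follows from \eqref{structural-reference-2} since $p>1$), use the upper bound from \eqref{structural-reference-2}, and absorb via Young's inequality. The paper's argument is the same, merely phrased with the shorthand $\tilde\A(x,z,\xi):=\lambda^{1-p}\A(x,\lambda\theta z,\lambda\xi)$.
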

\begin{proof} 
Let $\tilde \A(x, z,\xi) := \frac{\A(x,\lambda \theta z, \lambda \xi)}{\lambda^{p-1}}$. Then 
by using $u-\psi$ as a  test function in equation \eqref{GE} we get
\[
\int_{\Omega} \langle \tilde\A(x,   u, \nabla u), \nabla u  \rangle \, dx
=\int_{\Omega} \langle \tilde\A(x,   u, \nabla u),  \nabla \psi \rangle \, dx + \int_{\Omega} \langle\F, \nabla u -\nabla \psi \rangle \, dx. 
\]
But it follows from \eqref{structural-consequence} for $\eta=0$ and the fact $\A(x,z,0)=0$ that $\langle \A(x,z,\xi), \xi\rangle \geq C(p,\Lambda) |\xi|^p$. Therefore, we obtain 
\begin{align*}
\int_{\Omega} | \nabla u|^p \, dx
\leq C \Bigg[\int_{\Omega} |\nabla u|^{p-1} |\nabla \psi|  \, dx + \int_{\Omega} |\F| |\nabla u |  \, dx
+\int_{\Omega} |\F|  |\nabla \psi|  \, dx \Bigg].
\end{align*}
We deduce from  this and Young's inequality  that estimate \eqref{eq:energy} holds true.
\end{proof}

Let $\ba = \ba(x,\xi) : \Omega\times  \R^n \longrightarrow \R^n$ 
be  measurable in $x$ for every $\xi\in  \R^n$ and 
continuous in $\xi$ for a.e. $x\in \Omega$. In addition, we assume that there exist constants  $\Lambda>0$ and  $1< p<\infty$
such that the following  structural conditions 
are satisfied  for a.e. $x\in \Omega$ and all $\xi\in\R^n$:
\begin{equation}\label{simple-structural}
\langle  \ba(x,\xi), \xi\rangle \geq \Lambda^{-1} |\xi|^{p}\quad \mbox{ and }\quad 
 |\ba(x,\xi)|  \leq \Lambda |\xi|^{p-1}.
\end{equation}
The following interior H\"older estimate is classical, see for instance \cite{Gi,MZ}.
  \begin{theorem}[interior H\"older estimate]\label{classical-Holder} 
Let  $\Omega\subset \R^n$ be  a bounded domain  and  $\ba(x,\xi)$ satisfy 
\eqref{simple-structural}. 
Suppose $w\in W^{1,p}_{loc}(\Omega)$ is a weak solution
of $\div \ba(x,\nabla w)=0$ in $\Omega$.
Then $u$ is continuous in $\Omega$ and has the following bound on its modulus of continuity:
if $B_R(\bar x)\subset \Omega$ and $r\in (0, R)$, then we have 
\begin{align*}
\displaystyle
 \underset{B_r(\bar x)}{\osc} w 
\leq C  \big(\frac{r}{R}\Big)^\beta \underset{B_R(\bar x)}{\osc} w,
\end{align*} 
where $C>0$ and $\beta\in (0,1)$ depend only on $p$, $n$, 
 and $\Lambda$.
\end{theorem}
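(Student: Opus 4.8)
The plan is to run the classical De Giorgi iteration for equations of $p$-Laplacian type; essentially all of it uses only the two scalar inequalities contained in \eqref{simple-structural}. By translating we may take $\bar x=0$, and since every estimate below is scale invariant we work on a fixed ball $B_R\subset\Omega$.

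\textbf{Step 1 (Caccioppoli inequalities for truncations).} For a level $k\in\R$, concentric balls $B_\rho\subset B_\sigma\subset B_R$, and a cutoff $\zeta\in C_c^\infty(B_\sigma)$ with $\zeta\equiv 1$ on $B_\rho$ and $\abs{\nabla\zeta}\le C/(\sigma-\rho)$, I would test $\div\,\ba(x,\nabla w)=0$ against $\pm(w-k)_\pm\zeta^p$. Using $\langle\ba(x,\xi),\xi\rangle\ge\Lambda^{-1}\abs{\xi}^p$, the bound $\abs{\ba(x,\xi)}\le\Lambda\abs{\xi}^{p-1}$, and Young's inequality to absorb the cross term, one obtains
\[
\int_{B_\rho}\abs{\nabla(w-k)_\pm}^p\,dx\le\frac{C(p,\Lambda)}{(\sigma-\rho)^p}\int_{B_\sigma}(w-k)_\pm^p\,dx.
\]
\textbf{Step 2 (local boundedness).} Combining Step 1 with the Sobolev--Poincar\'e inequality on the superlevel sets $A(k,\rho)=\{w>k\}\cap B_\rho$ and iterating along geometric sequences of levels and radii, the quantities $Y_j:=\fint_{A(k_j,\rho_j)}(w-k_j)_+^p\,dx$ satisfy a nonlinear recursion $Y_{j+1}\le C\,b^{\,j}\,Y_j^{1+\kappa}$ with $\kappa=\kappa(n,p)>0$ and $b>1$; this forces $Y_j\to 0$ once $Y_0$ is below a threshold, which after rescaling yields the $L^\infty$--$L^p$ bound $\sup_{B_r}w\le C\big(\fint_{B_{2r}}w_+^p\,dx\big)^{1/p}$ (and similarly for $\inf$). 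In particular $w$ is locally bounded, so $\osc_{B_\rho}w<\infty$.

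\textbf{Step 3 (decay of oscillation --- the main point).} Put $\mu^+(\rho)=\sup_{B_\rho}w$, $\mu^-(\rho)=\inf_{B_\rho}w$, $\Omega(\rho)=\mu^+(\rho)-\mu^-(\rho)$. Fix $\rho\le R$; replacing $w$ by $-w$ if necessary we may assume $\abs{\{x\in B_{\rho/2}:\ w(x)\le\tfrac12(\mu^+(\rho)+\mu^-(\rho))\}}\ge\tfrac12\abs{B_{\rho/2}}$. Applying Step 1 over the dyadic levels $k_\ell=\mu^+(\rho)-2^{-\ell}\Omega(\rho)$, $\ell\ge 1$, together with De Giorgi's measure-shrinking lemma (an isoperimetric inequality relating $\abs{\{w<k_\ell\}}$, $\abs{\{w>k_{\ell+1}\}}$ and the measure of the intermediate strip) shows that $\abs{\{w>k_{\ell_0}\}\cap B_{\rho/4}}$ drops below the smallness threshold of Step 2 for some $\ell_0=\ell_0(p,n,\Lambda)$. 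Feeding this into the De Giorgi iteration of Step 2 run upward from level $k_{\ell_0}$ gives $\sup_{B_{\rho/8}}w\le\mu^+(\rho)-2^{-\ell_0-1}\Omega(\rho)$, hence $\Omega(\rho/8)\le(1-2^{-\ell_0-1})\,\Omega(\rho)=:\tau\,\Omega(\rho)$ with $\tau\in(0,1)$ depending only on $p,n,\Lambda$. Then $\rho\mapsto\Omega(\rho)$ is nondecreasing and satisfies $\Omega(\rho/8)\le\tau\,\Omega(\rho)$ for all $\rho\le R$, and the standard iteration lemma yields $\Omega(r)\le C(r/R)^\beta\,\Omega(R)$ with $\beta=\log_8(1/\tau)\in(0,1)$; since $\osc_{B_R}w\le 2\sup_{B_R}\abs{w}<\infty$ by Step 2, this is the claimed estimate and continuity of $w$ follows.

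\textbf{Main obstacle.} The delicate step is the passage in Step 3 from a fixed-fraction measure estimate to an arbitrarily small one, which requires the careful intermediate-level Caccioppoli iteration (De Giorgi's lemma). The degeneracy/singularity of \eqref{simple-structural} as $\abs{\xi}\to 0$ in the range $1<p<2$ causes no extra difficulty here, because the whole argument only invokes the scalar inequalities $\langle\ba(x,\nabla w),\nabla w\rangle\ge\Lambda^{-1}\abs{\nabla w}^p$ and $\abs{\ba(x,\nabla w)}\le\Lambda\abs{\nabla w}^{p-1}$, exactly as for the model $p$-Laplace operator; the finer monotonicity \eqref{structural-consequence} is not needed for this interior Hölder bound.
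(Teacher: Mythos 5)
Your proposal gives a self-contained proof via the classical De Giorgi machinery, whereas the paper simply cites Mal\'y--Ziemer \cite{MZ} (Theorem~4.11) and remarks that, upon inspection of their proof on page 196, the more general lower-order terms there are absent under \eqref{simple-structural}, so the constants can be seen to depend only on $p$, $n$, $\Lambda$ rather than on $\|w\|_{L^\infty}$. Your argument is correct and in substance is exactly what underlies the cited result: Step~1 (Caccioppoli for truncations) uses only $\langle\ba(x,\xi),\xi\rangle\ge\Lambda^{-1}|\xi|^p$ and $|\ba(x,\xi)|\le\Lambda|\xi|^{p-1}$; Step~2 (local boundedness by the De Giorgi geometric iteration) and Step~3 (the isoperimetric measure-shrinking lemma followed by the ``reduction of oscillation'' iteration) are the standard path to interior H\"older continuity for $p$-Laplacian--type equations. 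The one point worth making explicit, since it is precisely the observation the paper is flagging, is \emph{why} the dependence on $\|w\|_{L^\infty}$ drops out: your estimates in Steps~1--3 are invariant under $w\mapsto cw$ and $w\mapsto w+c$ (there are no zeroth-order terms in \eqref{simple-structural}), so the De Giorgi iteration is applied to the oscillation itself and yields $\Omega(\rho/8)\le\tau\,\Omega(\rho)$ with $\tau=\tau(p,n,\Lambda)$ independently of the size of $w$. Your self-contained version thus buys transparency on the constant dependence, at the cost of length; the paper's citation buys brevity but requires the reader to track constants in \cite{MZ}. Both are valid, and neither needs the monotonicity \eqref{structural-consequence}, as you correctly note.
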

\begin{proof}
This  is a special case of Theorem~4.11 in \cite{MZ}. Notice that due to a much more general equation in
\cite{MZ} their corresponding constants $C$ and $\beta$ depend also on $\|w\|_{L^\infty(B_R(\bar x))}$. However, an 
inspection of their proof in page 196 reveals that with structural condition \eqref{simple-structural}
these constants can be chosen to depend only on $p$, $n$, 
 and $\Lambda$ as stated.
\end{proof}
The next regularity result is well known  and  is a special case  of  \cite[Theorem~1.1]{KK} and 
 \cite[Theorem~4.19 and Corollary~4.20]{MZ}  (see also \cite{Tr} and \cite[Theorem~6.8 and estimate~(7.54)]{Gi}).
\begin{theorem}[global higher integrable and H\"older estimates]\label{thm:inter-regularity} 
Let  $\Omega\subset \R^n$ ($n\geq 2$) be  a bounded domain,   $\ba(x,\xi)$ satisfy 
\eqref{simple-structural},  $x_0\in \partial\Omega$,  and $r_0>0$.
Assume that there exist positive constants $ c_*$ and $\rho_*$ satisfying
\begin{equation}\label{uniform-p-thick}\big| B_\rho(z) \setminus 
\Omega \big| \geq  c_* \, |B_\rho(z)|
\end{equation}
for all $z\in B_{r_0}(x_0)\cap \partial
\Omega$ and all $\rho\in (0, \rho_*)$.  Suppose that   $w\in W^{1,p}(\Omega_{r_0}(x_0))$ is  a weak solution
of 
\begin{equation*} 
 \left\{
 \begin{alignedat}{2}
  &\div \,\ba(x, \nabla w)=0\quad \mbox{in}\quad \Omega_{r_0}(x_0),\\\
&w=0 \qquad\qquad\quad \mbox{ on}\quad B_{r_0}(x_0) \cap\partial\Omega.
 \end{alignedat} 
  \right.
\end{equation*}
Then
\begin{enumerate}
 \item[(i)]   There exist constants $p_0\in (p,\infty)$ and $C>0$ depending only on $p$, $n$, $c_*$, 
 and $\Lambda$ such that: if $0<r< s\leq r_0$ and 
  $B_{s}(y)\subset \Omega_{r_0}(x_0)$,   we have
\begin{align*}
&\Big(\frac{1}{|B_r(y)|}\int_{\Omega_r(y)} |\nabla w|^{p_0} 
dx\Big)^{\frac1p_0} \leq  C 
\, \Big(\frac{1}{|B_s(y)|} \int_{\Omega_s(y)} |\nabla w|^{p} dx\Big)^{\frac1p}.
\end{align*} 
\item[(ii)] For any $z\in B_{r_0} (x_0)\cap \partial\Omega$ and any $r\in (0,r_0]$, we have 
\begin{align*}
\displaystyle
\underset{\Omega_r(z)}{\osc} w 
\leq C \Big(\frac{r}{ r_0}\Big)^\beta \, \|w\|_{L^\infty(\Omega_{r_0}(z))},
\end{align*} 
where $C>0$ depends only on $p$, $n$, 
 and $\Lambda$, while $\beta\in (0,1)$ depends  in addition on $c_*$.
\end{enumerate}
\end{theorem}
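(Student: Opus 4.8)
The plan is to treat the two parts of Theorem~\ref{thm:inter-regularity} as essentially standard results on solutions of scalar quasilinear equations near a boundary portion where the complement of $\Omega$ is uniformly thick in the measure-density sense \eqref{uniform-p-thick}, and to assemble them from the cited references rather than reprove them from scratch. The key enabling observation is that the measure-density condition \eqref{uniform-p-thick}, which is weaker than (and implied by) a uniform capacity-density condition (the "$p$-thickness" that appears in \cite{KK,MZ}), is exactly the geometric hypothesis under which the zero boundary datum on $B_{r_0}(x_0)\cap\partial\Omega$ can be encoded analytically. The mechanism is the standard one: extend $w$ by zero outside $\Omega$ and observe that, because $w\in W^{1,p}(\Omega_{r_0}(x_0))$ vanishes on the boundary portion in the Sobolev trace sense, the zero extension lies in $W^{1,p}$ across $B_{r_0}(x_0)\cap\partial\Omega$; then on a ball $B_s(y)$ centered on this portion, a Sobolev--Poincar\'e inequality holds for $w$ with a constant controlled by $c_*$ (since $w$ vanishes on a set of proportional measure $|B_\rho\setminus\Omega|\ge c_*|B_\rho|$). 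This is the only place $c_*$ enters.

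For part~(i), the scheme is a boundary reverse H\"older inequality obtained by Gehring's lemma. First I would derive a Caccioppoli-type inequality on balls $B_s(y)\subset\Omega_{r_0}(x_0)$: testing $\div\,\ba(x,\nabla w)=0$ with $\zeta^p (w-k)$ for suitable cutoff $\zeta$ and constant $k$ (taken to be $0$ on balls meeting the boundary portion, and $\bar w_{B_s}$ on interior balls), and using the structural bounds \eqref{simple-structural}, gives
\[
\fint_{\Omega_s(y)}|\nabla w|^p\,dx \le C\fint_{\Omega_{2s}(y)}\Big|\frac{w-k}{s}\Big|^p dx.
\]
Next, the Sobolev--Poincar\'e inequality — interior on balls inside $\Omega$, and the zero-boundary-value version (using \eqref{uniform-p-thick}) on balls centered at $\partial\Omega$ — upgrades the right side to $C\big(\fint |\nabla w|^{p_*}\big)^{p/p_*}$ with $p_* = np/(n+p)<p$. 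The resulting reverse H\"older inequality with increasing supports, via the Gehring/Giaquinta--Modica lemma, yields a higher exponent $p_0>p$ and the stated estimate; the exponent $p_0$ and the constant $C$ depend only on $p$, $n$, $c_*$, $\Lambda$. This is precisely \cite[Theorem~1.1]{KK} and \cite[Theorem~4.19]{MZ} specialized to the homogeneous equation with structure \eqref{simple-structural}; the main thing to check is that their constants, which in the general setting may carry dependence on lower-order data or on $\|w\|_{L^\infty}$, collapse to the claimed dependence once all inhomogeneous and lower-order terms are absent — the same kind of inspection already invoked in the proof of Theorem~\ref{classical-Holder}.

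For part~(ii), the plan is to combine a boundary De Giorgi--Nash--Moser oscillation decay with the $p$-thickness of the complement. On balls $B_r(z)$ centered at $z\in B_{r_0}(x_0)\cap\partial\Omega$, since $w$ extends by zero and the zero set has measure at least $c_*|B_r(z)|$, one is in the situation of a subsolution/supersolution vanishing on a set of positive density, so the standard iteration (a boundary Caccioppoli inequality plus a measure-theoretic lemma of De Giorgi type, exactly as in \cite[Theorem~6.8]{Gi} or \cite[Theorem~4.19 and Corollary~4.20]{MZ}) gives
\[
\underset{\Omega_r(z)}{\osc}\, w \le C\Big(\frac{r}{r_0}\Big)^{\beta}\|w\|_{L^\infty(\Omega_{r_0}(z))}
\]
with $\beta\in(0,1)$ depending on $c_*$ (through the density constant in the iteration) and $C$ depending only on $p$, $n$, $\Lambda$ (the prefactor $C$ being insensitive to $c_*$ because it comes from the normalization $\osc_{\Omega_{r_0}(z)}w\le 2\|w\|_{L^\infty}$ rather than from the decay step). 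I would note that this is Corollary~4.20 in \cite{MZ} combined with Theorem~4.19 there, and again flag that the stated dependence of the constants follows by inspecting those proofs once lower-order terms are dropped. The main obstacle, as in part~(i), is not a new estimate but the bookkeeping: verifying that the quoted theorems' constants genuinely depend only on the listed quantities, and that the measure-density condition \eqref{uniform-p-thick} — rather than the stronger capacity-density condition sometimes assumed — already suffices for both the Sobolev--Poincar\'e step and the De Giorgi iteration (it does, because for $p\le n$ measure density implies the relevant Poincar\'e inequality with a dimensional constant, and the argument is even simpler when $p>n$ since $W^{1,p}$ embeds into H\"older spaces directly). Since all of this is extracted verbatim from the cited literature, I would keep the proof to a one-line pointer of exactly this form, as the authors do.

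\begin{proof}
Part~(i) is \cite[Theorem~1.1]{KK} and \cite[Theorem~4.19]{MZ}; part~(ii) is \cite[Corollary~4.20]{MZ} (see also \cite{Tr} and \cite[Theorem~6.8 and estimate~(7.54)]{Gi}), all specialized to the homogeneous equation $\div\,\ba(x,\nabla w)=0$ with the structure \eqref{simple-structural}. Under \eqref{uniform-p-thick} the complement of $\Omega$ is $p$-thick near $B_{r_0}(x_0)\cap\partial\Omega$, so the zero extension of $w$ across that boundary portion lies in $W^{1,p}$ and satisfies the Sobolev--Poincar\'e inequality with constant depending only on $n$, $p$, and $c_*$; feeding this into the Caccioppoli inequality for \eqref{simple-structural} produces the reverse H\"older estimate with increasing supports of part~(i) via Gehring's lemma, and into the De Giorgi iteration produces the oscillation decay of part~(ii). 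As in the proof of Theorem~\ref{classical-Holder}, an inspection of those arguments shows that, since no lower-order or inhomogeneous terms are present, the exponents and constants reduce to the claimed dependence: $p_0$ and $C$ in (i) on $p$, $n$, $c_*$, $\Lambda$; $C$ in (ii) on $p$, $n$, $\Lambda$, and $\beta$ additionally on $c_*$.
\end{proof}
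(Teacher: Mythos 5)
Your proof is essentially the paper's: both reduce the statement to \cite[Theorem~1.1]{KK} for part (i) and to \cite[Theorem~4.19]{MZ} (together with Corollary~4.20) for part (ii), and both rely on an inspection of those proofs to see that, once the equation is homogeneous with the simple structure \eqref{simple-structural}, the constants and exponents depend only on $p$, $n$, $c_*$, $\Lambda$ as claimed. One factual correction, though it does not affect the validity of the argument: you assert that the measure-density condition \eqref{uniform-p-thick} is \emph{weaker} than (and implied by) the uniform $p$-capacity-density ($p$-thickness) condition used in \cite{KK,MZ}; the implication runs the other way. Measure density of the complement yields capacity density (via the standard lower bound on $p$-capacity in terms of measure on a ball), but a set of measure zero can have positive $p$-capacity, so capacity density does not imply measure density. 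Thus \eqref{uniform-p-thick} is the \emph{stronger} hypothesis, which is precisely why the cited theorems apply directly without the extra Poincar\'e-inequality verification you describe.
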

\begin{proof}
 The result in $(i)$ is from \cite[Theorem~1.1]{KK} and the precise estimate for $\nabla w$ 
 can be tracked from their proof and the use of Gehring's lemma. On the other hand, the result in $(ii)$ is 
 a particular case of Theorem~4.19  in \cite{MZ}. The  estimate obtained in \cite{MZ} is 
 less explicit than ours
 due to their more general equation. However, our stated oscillation estimate  follows from their proof in pages 200-201
 and our simple structural condition \eqref{simple-structural}.
\end{proof}

The next result is a special case of \cite[Lemma~5]{Li} and plays an essential role in proving our  main results.
\begin{theorem}[Boundary Lipschitz estimate]\label{thm:bnd-lipschitz} 
Let $\ba: \R^n \to \R^n$  be a continuous vector field such that  $\xi \mapsto\ba(\xi)$ is differentiable on $ \mathbb{R}^n\setminus \{0\}$ and  $\ba$ satisfies 
\eqref{structural-reference-1}--\eqref{structural-reference-2}. Suppose that   $w\in W^{1,p}(B_R^+)$ is  a weak solution
of $\div \ba(\nabla w)=0$ in $B^+_R$ and $w=0$ on $B_R \cap\{x: x_n =0\}$. Then we have
\begin{equation*}
 \sup_{B^+_{\frac{R}{3}}} |\nabla w|^p \leq C(p,n,\Lambda) \, \frac{1}{R^n}\int_{B^+_R} |\nabla w|^p dx.
 \end{equation*}
\end{theorem}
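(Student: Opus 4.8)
The plan is to reduce to the unit ball and then recognize the statement as the flat--boundary gradient bound of Lieberman \cite[Lemma~5]{Li}. If $w$ solves $\div\,\ba(\nabla w)=0$ in $B^+_R$ with $w=0$ on $B_R\cap\{x\colon x_n=0\}$, put $v(x):=w(Rx)/R$ for $x\in B^+_1$. A direct computation shows that $v$ solves $\div\,\ba(\nabla v)=0$ in $B^+_1$ with the same vector field $\ba$ (hence the same constants in \eqref{structural-reference-1}--\eqref{structural-reference-2}), that $v=0$ on $B_1\cap\{x\colon x_n=0\}$, and that both sides of the asserted inequality are unchanged under this change of variables, since
\[
\sup_{B^+_{1/3}}|\nabla v|^p=\sup_{B^+_{R/3}}|\nabla w|^p\qquad\text{and}\qquad \int_{B^+_1}|\nabla v|^p\,dx=\frac1{R^n}\int_{B^+_R}|\nabla w|^p\,dx .
\]
Thus it suffices to prove $\sup_{B^+_{1/3}}|\nabla v|^p\le C(p,n,\Lambda)\int_{B^+_1}|\nabla v|^p\,dx$, which is exactly \cite[Lemma~5]{Li}; because our structure conditions involve only $p$, $n$, and $\Lambda$, so does the resulting constant.

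For completeness, here is the mechanism behind that estimate. Away from the flat portion, say on $\{x\in B^+_{1/3}\colon x_n\ge\tfrac16\}$, one covers by balls of radius $\tfrac1{24}$ that are compactly contained in $B^+_1$ and applies the interior gradient bound of DiBenedetto \cite{D} and Tolksdorf \cite{T} on each, obtaining $|\nabla v|^p\le C(p,n,\Lambda)\int_{B^+_1}|\nabla v|^p\,dx$ there. Near $B_1\cap\{x_n=0\}$ one uses that $v=0$ along it, so the tangential derivatives $\partial_i v$ ($i<n$) vanish on the flat boundary; together with the reverse H\"older inequality for $\nabla v$ up to the flat boundary from Theorem~\ref{thm:inter-regularity}(i), the DiBenedetto--type iteration over the super-level sets $\{|\nabla v|>k\}$ — run with the intrinsic scaling that keeps all constants dependent only on $p,n,\Lambda$ — combined with a barrier controlling the normal derivative $\partial_n v$, produces the pointwise gradient bound near the flat boundary. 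A finite covering of $B^+_{1/3}$ by the interior region and a neighbourhood of $B_1\cap\{x_n=0\}$ then yields the claim.

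The main obstacle is precisely this boundary iteration. In particular, the natural shortcut of reflecting $v$ oddly across $\{x_n=0\}$ is \emph{not} available: for a general $\ba$ with no symmetry, the odd reflection produces an equation whose vector field is merely measurable, indeed discontinuous, in $x$, and solutions of equations of $p$-Laplacian type with merely measurable $x$-dependence need not have locally bounded gradient. Hence the flat--boundary Lipschitz estimate genuinely requires the degenerate--equation regularity theory of Lieberman and cannot be reduced to the interior case. Everything else — the scaling reduction, the interior estimate, and the covering — is routine.
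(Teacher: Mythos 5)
The paper itself gives no proof of Theorem~\ref{thm:bnd-lipschitz}; it simply states that the result is a special case of Lieberman's Lemma~5 in \cite{Li}, and your proposal does exactly the same after a correct (and harmless) rescaling to the unit half-ball. The additional ``for completeness'' paragraph is an informal gloss on why odd reflection fails for a general $\ba$ and what machinery Lieberman uses, but it defers the actual boundary Lipschitz estimate to \cite{Li} just as the paper does, so the two approaches coincide.
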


\subsection{Key regularity results}\label{sub:key-reg} 
\begin{theorem}\label{thm:main}
 Let  $\A$ satisfy \eqref{structural-reference-1}--\eqref{structural-reference-3} with $p>1$, and let $w\in A_s$ for some 
 $1<s<\infty$. 
For any $q\geq p$ and $M>0$,  there exists 
a  constant  $\delta=\delta( p, q,  n, \omega, \Lambda,  M, s, [w]_{A_s})>0$    such that:   if $\Omega$ is  $(\delta,R)$-Reifenberg flat,  $\lambda>0$, $\theta>0$, 
\eqref{smallness-1} holds, 
and  $u$ is a weak solution of \eqref{GE} satisfying 
 $\|u\|_{L^\infty(\Omega)}+\|\psi\|_{L^\infty(\Omega)}\leq \frac{M}{\lambda \theta}$,  then
 \begin{equation}\label{main-estimate}
\fint_{\Omega} |\nabla u|^q\, dw \leq  C\left( \|\nabla u\|_{L^p(\Omega)}^q + \fint_{\Omega}\M_{\Omega}(|\nabla\psi|^p
+|\F|^{p'})^{\frac{q}{p}} \, dw  \right).
\end{equation}
Here  $C>0$ is a constant 
 depending only on  $q$, $p$, $n$, $\omega$,  $\Lambda$,   $M$,  $s$, $R$, $\diam(\Omega)$, and $[w]_{A_s}$.
\end{theorem}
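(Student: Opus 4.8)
The plan is to prove Theorem~\ref{thm:main} through a Calder\'on--Zygmund type covering/stopping-time argument, where the key input is the gradient approximation result stated as Proposition~\ref{lm:localized-compare-gradient}: namely, that gradients of weak solutions to the two-parameter equation \eqref{GE} can be approximated in $L^p$ norm, on small balls covering $\overline\Omega$, by bounded gradients, with all constants independent of $\lambda$ and $\theta$. The proof naturally splits according to whether a ball $B_r(y)$ is an interior ball ($B_{4r}(y)\subset\Omega$) or a boundary ball, and in the latter case one uses the Reifenberg flatness together with Theorem~\ref{thm:inter-regularity} (interior higher integrability and boundary oscillation) and Theorem~\ref{thm:bnd-lipschitz} (boundary Lipschitz estimate for the flat-boundary frozen equation) to produce the comparison function with an $L^\infty$ gradient bound controlled by the local $L^p$ average. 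I would first fix the exponent $q\ge p$ and the constant $M$, and set up the machinery on the enlarged class of equations \eqref{GE}.

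First, I would record the \emph{density estimate for the gradient}: there exist constants $N_0>1$ and $\delta_0$ small (depending on the data) such that if $\Theta_{\Omega_r(y)}(\A)\le\delta$ and $\Omega$ is $(\delta,R)$-Reifenberg flat, then for an appropriate choice of radii,
\[
\Big|\{x\in\Omega_r(y): \M_\Omega(|\nabla u|^p)(x) > N_0^p \Lambda_1\}\cap \{x: \M_\Omega(|\nabla\psi|^p+|\F|^{p'})(x)\le \delta^p \Lambda_1\}\Big| \le \e\, |B_r(y)|,
\]
where $\Lambda_1$ is a suitable level. This is the standard consequence of Proposition~\ref{lm:localized-compare-gradient} combined with the weak $(1,1)$ bound for the Hardy--Littlewood maximal operator $\M_\Omega$: on the set where the data is small, $u$ is close in $W^{1,p}$ to a solution $v$ of a frozen homogeneous problem whose gradient is bounded, so the superlevel set of $\M_\Omega(|\nabla u|^p)$ at a high level is captured by the superlevel set of $\M_\Omega(|\nabla(u-v)|^p)$, whose measure is small by Chebyshev. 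The crucial point, inherited from Proposition~\ref{lm:localized-compare-gradient}, is that the comparison estimate is scale-invariant and $\lambda,\theta$-independent because equation \eqref{GE} has the invariant two-parameter structure.

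Next I would invoke the Vitali-type covering lemma (the ``$A_s$-weighted'' version, e.g. as in \cite{BPS, DiN}) to upgrade this density estimate into an exponential decay estimate for the distribution function of $\M_\Omega(|\nabla u|^p)$ with respect to the measure $dw$: for every $k\in\N$,
\[
w\big(\{x\in\Omega: \M_\Omega(|\nabla u|^p)(x) > (N_0^p)^k \Lambda_1\}\big) \le (C_1 \e_1)^k\, w(\Omega) + \sum_{i=1}^{k} (C_1\e_1)^{k-i}\, w\big(\{x: \M_\Omega(|\nabla\psi|^p+|\F|^{p'})(x) > \delta^p (N_0^p)^{i-1}\Lambda_1\}\big),
\]
where $\e_1 = K\e^\beta$ with $K,\beta$ the $A_s$-doubling constants from Lemma~\ref{weight:basic-pro}, and one chooses $\e$ (hence $\delta$) so small that $C_1\e_1 < (N_0^p)^{-q/p}$. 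Here $\Lambda_1$ is normalized so that $\fint_\Omega \M_\Omega(|\nabla u|^p)\,dw \lesssim \Lambda_1$, which forces a bound involving $\|\nabla u\|_{L^p(\Omega)}^p$; the $L^\infty$ bound $\|u\|_{L^\infty}+\|\psi\|_{L^\infty}\le M/(\lambda\theta)$ enters precisely here, since after rescaling by $\lambda$ the frozen principal part sees $\lambda\theta u$ with $\|\lambda\theta u\|_{L^\infty}\le M$, keeping all constants in \eqref{structural-reference-3} under control uniformly in the parameters.

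Finally, I would sum the geometric series: multiplying the decay estimate by $(N_0^p)^{kq/p}$ and summing over $k$ gives, via the standard layer-cake identity $\int_\Omega \M_\Omega(\cdot)^{q/p}\,dw \approx \sum_k (N_0^p)^{kq/p} w(\{\M_\Omega(\cdot) > (N_0^p)^k\Lambda_1\})$,
\[
\fint_\Omega \M_\Omega(|\nabla u|^p)^{q/p}\,dw \le C\Big(\Lambda_1^{q/p} + \fint_\Omega \M_\Omega(|\nabla\psi|^p+|\F|^{p'})^{q/p}\,dw\Big),
\]
and since $|\nabla u|^q \le \M_\Omega(|\nabla u|^p)^{q/p}$ a.e. (Lebesgue differentiation) and $\Lambda_1^{q/p}\lesssim \|\nabla u\|_{L^p(\Omega)}^q$ after the normalization, estimate \eqref{main-estimate} follows with $C$ depending only on the listed quantities. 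I expect the main obstacle to be bookkeeping the uniformity of all constants in the parameters $\lambda,\theta$ throughout the iteration --- in particular ensuring the level $\Lambda_1$, the threshold $N_0$, and the smallness of $\e$ can be fixed once and for all before the covering argument is run --- which is exactly why the argument must be carried out on the enlarged, dilation-invariant class \eqref{GE} rather than on \eqref{ME} directly; the weighted covering step itself is routine given Lemma~\ref{weight:basic-pro}.
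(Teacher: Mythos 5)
Your proposal follows essentially the same route as the paper: a density estimate for $\M_\Omega(|\nabla u|^p)$ derived from the approximation Proposition~\ref{lm:localized-compare-gradient} via a good-$\lambda$/Chebyshev argument, a Vitali-type covering step exploiting the $A_s$ doubling from Lemma~\ref{weight:basic-pro}, an iteration by rescaling $u,\psi,\F,\lambda$ by powers of $N^{1/p}$ (which is exactly why the two-parameter class \eqref{GE} must replace \eqref{ME}), and a geometric-series summation via the layer-cake formula followed by a normalization of the solution by a multiple $\Upsilon$ of $\|\nabla u\|_{L^p(\Omega)}$ to discharge the initial distribution-function smallness hypothesis. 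The only cosmetic deviation is the form of the initial density estimate, which the paper states as a conditional (``one point in the ball has small maximal function'' implies small measure of the superlevel set, see Lemma~\ref{A-initial-density-est}) rather than the good-$\lambda$ intersection form you record, but the two are interchangeable in the covering argument.
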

We also have the following localized version of Theorem~\ref{thm:main}.
\begin{theorem}\label{thm:localized-main}
 Let  $\A$ satisfy \eqref{structural-reference-1}--\eqref{structural-reference-3} with $p>1$, and let $w\in A_s$ for some 
 $1<s<\infty$. 
For any $q\geq p$ and $M>0$,  there exists 
a  constant  $\delta=\delta( p, q,  n, \omega, \Lambda,  M, s, [w]_{A_s})>0$    such that:  
if $\Omega$ is  $(\delta,R)$-Reifenberg flat,  $\lambda>0$, $\theta>0$, \eqref{smallness-1} holds,
and  $u$ is a weak solution of \eqref{GE} satisfying 
 $\|u\|_{L^\infty(\Omega)}+\|\psi\|_{L^\infty(\Omega)}\leq \frac{M}{\lambda \theta}$,  then
 \begin{align*}
 \frac{1}{w (B_{r}( y))} \int_{\Omega_{r}( y)}|\nabla u|^q d w 
 \leq  C\Bigg[ \Big(\frac{1}{|B_{2 r}( y)|}\int_{\Omega_{2r}( y)}|\nabla u|^p\, dx\Big)^{\frac{q}{p}}+
\frac{1}{ w (B_{r}( y))} 
\int_{\Omega_{r}(y)}\M_{\Omega_{2 r}( y)}(
|\nabla\psi|^p +| \F|^{p'})^{\frac{q}{p}} \, d w  \Bigg]
\end{align*}
for every $y\in \overline\Omega$ and  $r>0$.
Here  $C>0$  depends only on  $q$, $p$, $n$, $\omega$,  $\Lambda$,   $M$,  $s$, $R$, $\diam(\Omega)$, and $[w]_{A_s}$.
\end{theorem}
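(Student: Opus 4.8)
The plan is to run the Calder\'on--Zygmund level-set machinery for the maximal function $\M_{\Omega_{2r}(y)}(|\nabla u|^p)$, taking Proposition~\ref{lm:localized-compare-gradient} as the engine; throughout put $g:=|\nabla\psi|^p+|\F|^{p'}$. The two features that push this beyond the classical scheme -- that \eqref{GE} carries the parameters $\lambda,\theta$ and that the datum $\psi$ is nonhomogeneous -- have already been dealt with upstream: Proposition~\ref{lm:localized-compare-gradient} produces, with constants independent of $\lambda$ and $\theta$, a bounded gradient approximating $\nabla u$ in $L^p$ on small balls, while the only trace left by $\psi$ below is the term $|\nabla\psi|^p$ inside $g$. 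First I would assume $0<r\le\diam(\Omega)$, which suffices; note that $\|u\|_{L^\infty(\Omega)}+\|\psi\|_{L^\infty(\Omega)}\le M/(\lambda\theta)$ forces $\lambda\theta u\in[-M,M]$, so that \eqref{smallness-1} and \eqref{structural-reference-3} are applicable; and then work only with balls $B_\rho(z)$, $z\in\overline\Omega$, of radius $\rho\lesssim\min\{r,R\}$, for which $(\delta,R)$-Reifenberg flatness gives both the exterior density bound $|B_\rho(z)\setminus\Omega|\ge c_n|B_\rho(z)|$ (condition \eqref{uniform-p-thick} with $c_*=c_*(n)$) and, via Lemma~\ref{weight:basic-pro}, the comparison $w(\Omega_\rho(z))\approx w(B_\rho(z))$.

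The heart of the proof is a localized density (``good-$\lambda$'') estimate: there are $N=N(p,q,n,\Lambda)>1$ and, for every small $\e>0$, a proportionality such that for each ball $B_\rho(z)$ as above with $B_{4\rho}(z)\subset B_{2r}(y)$,
\[
\big|\{\,\M_{\Omega_{2r}(y)}(|\nabla u|^p)>N\lambda_0\,\}\cap B_\rho(z)\big|\ >\ \e\,|B_\rho(z)|
\]
forces
\[
B_\rho(z)\cap\Omega\ \subset\ \{\,\M_{\Omega_{2r}(y)}(|\nabla u|^p)>\lambda_0\,\}\ \cup\ \{\,\M_{\Omega_{2r}(y)}(g)>c\,\lambda_0\,\}.
\]
I would prove this by contraposition: if the inclusion fails, normalize so that the $B_\rho(z)$-averages of $|\nabla u|^p$ and of $g$ are both $\lesssim\lambda_0$; Proposition~\ref{lm:localized-compare-gradient} then hands us, on $\Omega_{2\rho}(z)$ (or its flat-boundary model), a comparison function $v$ solving a frozen zero-Dirichlet-data equation with $\|\nabla v\|_{L^\infty}^p\le C\lambda_0$ and with $\nabla u-\nabla v$ small in $L^p$ on $\Omega_\rho(z)$; splitting $|\nabla u|^p\lesssim|\nabla u-\nabla v|^p+\|\nabla v\|_{L^\infty}^p$ and using the weak-$(1,1)$ bound for the maximal operator keeps $\M_{\Omega_{2r}(y)}(|\nabla u|^p)\le N\lambda_0$ on a subset of measure $>(1-\e)|B_\rho(z)|$, contradicting the hypothesis once $N$ is fixed large and $\e$ small. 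The higher integrability of Theorem~\ref{thm:inter-regularity}(i), the boundary Lipschitz bound of Theorem~\ref{thm:bnd-lipschitz}, and the interior Hölder/gradient theory behind Theorem~\ref{classical-Holder} are precisely the ingredients that underlie Proposition~\ref{lm:localized-compare-gradient} here.

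Next I would feed this into the Vitali-type covering lemma adapted to $(\delta,R)$-Reifenberg flat domains -- whose hypothesis is exactly the exterior density bound recorded above -- obtaining, with base level $\lambda_*\approx\tfrac{1}{|B_{2r}(y)|}\int_{\Omega_{2r}(y)}(|\nabla u|^p+g)\,dx$ (chosen from the weak-$(1,1)$ inequality so that the level-$\lambda_*$ super-level set is already a small fraction of $B_{2r}(y)$), a geometric decay in $k$ of $\big|\{\M_{\Omega_{2r}(y)}(|\nabla u|^p)>N^k\lambda_*\}\cap B_{2r}(y)\big|$, plus at each stage a term controlling where $\M_{\Omega_{2r}(y)}(g)$ is large. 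Passing from Lebesgue to $w$-measure by Lemma~\ref{weight:basic-pro} ($w(E)\le K\,w(B_{2r}(y))\,(|E|/|B_{2r}(y)|)^\beta$ for $E\subset B_{2r}(y)$), I would then \emph{choose} $\e$, hence $\delta$, small enough that $N^{q/p}(C\e)^\beta<1/2$ -- this is where $\delta$ picks up its dependence on $q$, $s$, $\beta$, $[w]_{A_s}$ -- so that after weighting by $N^{kq/p}\lambda_*^{q/p}$ the series over $k$ converges, yielding
\[
\int_{\Omega_{2r}(y)}\M_{\Omega_{2r}(y)}(|\nabla u|^p)^{q/p}\,dw\ \le\ C\,w(B_{2r}(y))\,\lambda_*^{q/p}+C\int_{\Omega_{2r}(y)}\M_{\Omega_{2r}(y)}(g)^{q/p}\,dw.
\]
Restricting the left-hand integral to $\Omega_r(y)$, using $|\nabla u|^q\le\M_{\Omega_{2r}(y)}(|\nabla u|^p)^{q/p}$ a.e.\ there, dividing by $w(B_r(y))$, invoking the doubling of $w$, and bounding $\lambda_*^{q/p}\lesssim\big(\tfrac{1}{|B_{2r}(y)|}\int_{\Omega_{2r}(y)}|\nabla u|^p\,dx\big)^{q/p}+w(B_{2r}(y))^{-1}\int_{\Omega_{2r}(y)}\M_{\Omega_{2r}(y)}(g)^{q/p}\,dw$ (using $w(\Omega_{2r}(y))\approx w(B_{2r}(y))$) produces exactly the claimed inequality.

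The main obstacle is the constraint that pervades every step: all constants -- in the density estimate, in the covering, in the eventual choice of $\delta$ -- must be independent of $\lambda$ and $\theta$. This is precisely why one passes to the two-parameter equation \eqref{GE}, which (unlike \eqref{ME}) is invariant under dilations and rescalings of $\Omega$, and why Proposition~\ref{lm:localized-compare-gradient} is stated with $\lambda,\theta$-free constants; granting that proposition, the remaining care goes into the ordering of parameter choices in the third paragraph -- $N$ fixed first from $p,q,n,\Lambda$ (through the interior and boundary Lipschitz bounds and the weak-$(1,1)$ constant), then $\e$ and $\delta$ small in terms of $q$, $M$, $\omega$, and the $A_s$ data of $w$ -- and into the localization bookkeeping: truncating every maximal operator to $B_{2r}(y)$, anchoring $\lambda_*$ at scale $r$, and keeping each covering ball of radius $\lesssim\min\{r,R\}$ so that Reifenberg flatness and the weight comparisons remain available.
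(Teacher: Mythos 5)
Your scheme is correct in substance and runs on the same engine the paper uses -- the good-$\lambda$ density estimate powered by Proposition~\ref{lm:localized-compare-gradient}, fed into a Vitali/Krylov--Safonov covering, then an iteration and summation -- so at the conceptual level this is the paper's argument. What differs is the organization around the scale $r$. The paper's proof of Theorem~\ref{thm:localized-main} does not rerun the level-set machinery from scratch on $B_{2r}(y)$: it first records the case $r\ge R/2$, where $V=\Omega_r(y_0)$, $U=\Omega_{2r}(y_0)$ satisfy the hypotheses of the already-established Lemma~\ref{A-second-density-est} (one checks the interior density condition \eqref{V} for $V$, including the new possibility $z\in\partial B_r(y_0)$), and then reuses verbatim the iteration from the proof of Theorem~\ref{thm:main}; for $0<r<R/2$ it performs the dilation $x\mapsto r^{-1}x$, under which $[\,\tilde w\,]_{A_s}=[w]_{A_s}$ and $\tilde\Omega$ is $(\delta,2)$-Reifenberg flat, reducing to the case just treated with $R=2$, $r=1$, and then changes variables back -- which is exactly how the paper gets a constant independent of $R$ in that regime. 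You instead propose to anchor the covering directly at scale $\min\{r,R\}$ and run the density--covering--summation argument locally on $B_{2r}(y)$. That is a legitimate alternative, but note it does not come for free: Lemma~\ref{A-second-density-est} as stated uses the fixed covering $\{z_i\}$ of $\overline\Omega$ at scale $\sigma\bar R$ and requires $\Omega_{R/2}(y)\subset U$ for every $y\in V$, which fails when $r<R/2$; so your version cannot simply invoke that lemma but must re-derive the local covering statement (bounding the Vitali radii by $\lesssim\min\{r,R\}$ through the choice of base level $\lambda_*$, and checking the density condition for $V=\Omega_r(y)$). The paper's rescaling trick sidesteps exactly this re-derivation by reusing the already-proved lemma at a normalized scale. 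Either route closes; the paper's is more economical given the existing lemmas, while yours is more self-contained at the cost of repeating the covering argument at the local scale.
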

The  above two results play a crucial role in proving our main theorems stated in
Section~\ref{sec:Intro}, and their proofs will be  given in Subsection~\ref{sub:Lebesgue-Spaces}. 
As a consequence of Theorem~\ref{thm:main} and  the maximal function estimate,  we get:

\begin{corollary}[global weighted $L^q$ estimate]\label{cor:weighted} Let  $\A$ satisfy
\eqref{structural-reference-1}--\eqref{structural-reference-3} with $p>1$.
Then for any $q>p$, $M>0$, and any weight $w\in A_{\frac{q}{p}}$, there exists a  constant  $\delta>0$  such that:  if $\Omega$ is  $(\delta,R)$-Reifenberg flat, 
 $\lambda>0$, $\theta>0$, \eqref{smallness-1} holds, 
and  $u$ is a weak solution of \eqref{GE} satisfying 
 $\|u\|_{L^\infty(\Omega)}+\|\psi\|_{L^\infty(\Omega)}\leq \frac{M}{\lambda \theta}$,  we have
\begin{equation*}
\fint_{\Omega} |\nabla u|^q\, dw \leq C \fint_{\Omega}\Big(|\nabla \psi|^q + |\F|^{\frac{q}{p-1}}\Big)\, dw.
\end{equation*}
Here  $C,\, \delta$ are constants 
 depending only on  $q$, $p$, $n$, $\omega$, $\Lambda$,   $M$,  $ R$, $\diam(\Omega)$, and $[w]_{A_{\frac{q}{p}}}$.
\end{corollary}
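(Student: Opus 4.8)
The plan is to derive Corollary~\ref{cor:weighted} from Theorem~\ref{thm:main} together with the weighted strong-type bound for the Hardy--Littlewood maximal operator. First I would apply Theorem~\ref{thm:main} with the given exponent $q>p$, noting that the hypothesis $w\in A_{q/p}$ implies $w\in A_s$ with $s=q/p>1$ and $[w]_{A_s}=[w]_{A_{q/p}}$, so the required smallness constant $\delta$ may be taken as the one furnished by Theorem~\ref{thm:main} (which then depends only on $q,p,n,\omega,\Lambda,M,[w]_{A_{q/p}}$ since $s$ is determined by $q/p$). This yields
\[
\fint_{\Omega} |\nabla u|^q\, dw \leq C\left( \|\nabla u\|_{L^p(\Omega)}^q + \fint_{\Omega}\M_{\Omega}\big(|\nabla\psi|^p+|\F|^{p'}\big)^{\frac{q}{p}}\, dw\right).
\]

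Next I would dispose of the energy term: by Proposition~\ref{prop:energy}, $\int_\Omega |\nabla u|^p\,dx \leq C\int_\Omega (|\nabla\psi|^p+|\F|^{p'})\,dx$, hence $\|\nabla u\|_{L^p(\Omega)}^q \leq C\big(\int_\Omega(|\nabla\psi|^p+|\F|^{p'})\,dx\big)^{q/p}$. To pass to the weighted right-hand side I would use that $w\in A_{q/p}\subset A_\infty$ is doubling and comparable on $\Omega$, together with Jensen/H\"older: since $q/p>1$,
\[
\Big(\frac{1}{|\Omega|}\int_\Omega g\,dx\Big)^{q/p} \leq C\, \frac{1}{w(\Omega)}\int_\Omega g^{q/p}\,dw
\]
for nonnegative $g$ (this is exactly the statement that the $A_{q/p}$ condition controls averages against $dx$ by weighted averages of the $(q/p)$-th power — it follows from $w^{-1/(q/p-1)}\in L^1$ and H\"older's inequality, with constant $[w]_{A_{q/p}}$). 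Applying this with $g=|\nabla\psi|^p+|\F|^{p'}$ bounds $\|\nabla u\|_{L^p(\Omega)}^q$ by $C\fint_\Omega(|\nabla\psi|^p+|\F|^{p'})^{q/p}\,dw = C\fint_\Omega(|\nabla\psi|^q+|\F|^{q/(p-1)})\,dw$ up to the elementary inequality $(a+b)^{q/p}\le 2^{q/p}(a^{q/p}+b^{q/p})$ and $(|\F|^{p'})^{q/p}=|\F|^{p'q/p}=|\F|^{q/(p-1)}$ since $p'/p=1/(p-1)$.

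For the maximal-function term, I would invoke the weighted $L^{q/p}_w$ boundedness of $\M_\Omega$: because $q/p>1$ and $w\in A_{q/p}$, the (centered) maximal operator maps $L^{q/p}_w(\Omega)$ into itself with norm depending only on $n,q/p,[w]_{A_{q/p}}$ (this is Muckenhoupt's theorem; it is the unweighted-Morrey, i.e. $\varphi=\phi=w(B)$, special case of Lemma~\ref{lm:one-weight-case}, or can simply be cited directly). Hence
\[
\int_\Omega \M_\Omega\big(|\nabla\psi|^p+|\F|^{p'}\big)^{q/p}\,dw \leq C\int_\Omega \big(|\nabla\psi|^p+|\F|^{p'}\big)^{q/p}\,dw \leq C\int_\Omega\big(|\nabla\psi|^q+|\F|^{q/(p-1)}\big)\,dw.
\]
Dividing by $w(\Omega)$ and combining with the energy estimate gives the claimed bound, with $C$ and $\delta$ depending only on $q,p,n,\omega,\Lambda,M,R,\diam(\Omega),[w]_{A_{q/p}}$.

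I do not expect a serious obstacle here; the only point requiring a little care is the bookkeeping of constants — confirming that the dependence on $s$ and $[w]_{A_s}$ in Theorem~\ref{thm:main} collapses to a dependence on $[w]_{A_{q/p}}$ once one sets $s=q/p$, and that the Jensen-type inequality converting the $dx$-average of the energy term into a weighted $dw$-average of its power really does only cost the $A_{q/p}$ constant. The mild subtlety that the $A_{q/p}$ inequality is stated for balls rather than for $\Omega$ itself is handled by covering $\Omega$ by a single ball $B\supset\Omega$ of radius $\sim\diam(\Omega)$ and using the doubling property of $w$ to compare $w(B)$ with $w(\Omega)$ (which is where the dependence on $\diam(\Omega)$ enters, exactly as in Theorem~\ref{thm:main}).
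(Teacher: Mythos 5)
Your proposal is correct and follows essentially the same route as the paper's proof: apply Theorem~\ref{thm:main} with $s=q/p$, use Muckenhoupt's weighted strong-type bound to absorb the maximal function, and control the energy term $\|\nabla u\|_{L^p(\Omega)}^q$ via Proposition~\ref{prop:energy} together with H\"older and the $A_{q/p}$ condition on a ball $B_0\supset\Omega$ of radius $\diam(\Omega)$. The only cosmetic difference is that you phrase the H\"older/$A_{q/p}$ step as a Jensen-type inequality comparing $dx$- and $dw$-averages, whereas the paper writes out the same computation directly as $\big[\int_\Omega w^{-p/(q-p)}dx\big]^{(q-p)/p}\leq [w]_{A_{q/p}}|B_0|^{q/p}/w(\Omega)$; the content and constant-dependence are identical.
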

\begin{proof}
Since $q>p$, we have from Theorem~\ref{thm:main} and Muckenhoup's strong type weighted estimate for the maximal function that
\begin{equation}\label{nablau-F}
\fint_{\Omega} |\nabla u|^q\, dw \leq C\left(\|\nabla u\|_{L^p(\Omega)}^q  +
 \fint_{\Omega} \big[|\nabla\psi|^q + |\F|^{\frac{q}{p-1}}\big]\, dw\right).
\end{equation}
From Proposition~\ref{prop:energy}
and  H\"older inequality, we also have
\begin{align}\label{nabla-varphi-F}
 \|\nabla u\|_{L^p(\Omega)}^q\leq  C \Big[\int_{\Omega} \big( |\nabla\psi|^p +|\F|^{p'}\big) \, dx\Big]^{\frac{q}{p}} 
 \leq  C  \Big[\int_{\Omega} w^{\frac{-p}{q-p}} dx\Big]^{\frac{q-p}{p}}
  \int_{\Omega} \big( |\nabla\psi|^q + |\F|^{\frac{q}{p-1}}\big)\, dw.
\end{align} 
Let $\bar x\in \R^n$ be such that $\Omega\subset B_0 :=B(\bar x, \diam(\Omega))$. Then 
\[
 \Big[\int_{\Omega} w^{\frac{-p}{q-p}} dx\Big]^{\frac{q-p}{p}} 
 \leq  \Big[\int_{B_0} w^{\frac{-p}{q-p}} dx\Big]^{\frac{q-p}{p}}\leq [w]_{A_{\frac{q}{p}}} \frac{|B_0|^{\frac{q}{p}}}{w(B_0)} 
 \leq [w]_{A_{\frac{q}{p}}} \frac{|B_0|^{\frac{q}{p}}}{w(\Omega)}.
\]
 This together with  \eqref{nablau-F}--\eqref{nabla-varphi-F} yields the desired conclusion.
\end{proof}

\section{Approximating gradients of solutions }\label{approximation-gradient}
In this section, we always suppose   that $\Omega$ is a bounded domain in $\R^n$ with $n\geq 2$. For the next result, we also assume that  
\begin{equation}\label{uniform-density-cond}
0\in \partial\Omega\quad \mbox{and}\quad \big| B_\rho(z) \setminus 
\Omega \big| \geq \frac13 \, |B_\rho(z)|\quad \mbox{for all $z\in B_4\cap \partial
\Omega\,$  and all $\, 0<\rho<2$}.
\end{equation}
\begin{lemma}\label{lm:local-boundary} Let $\A$ satisfy \eqref{structural-reference-1}--\eqref{structural-reference-3}, and 
$M>0$.
For any $\e\in (0,1]$, there exist small positive constants $\delta$ and $\sigma$  depending only on $\e$,   $p$,    $n$, $\omega$, $\Lambda$, and $M$ such that: 
if  $ \lambda>0$, $\theta>0$, $\Omega$ satisfies \eqref{uniform-density-cond},
\begin{align}\label{boundary-cond-at-zero}
 B_{3\sigma} \cap \{x_n> 3\sigma\delta\}  \subset \Omega_{3\sigma} \subset B_{3\sigma}\cap \{x_n> -3\sigma\delta\},
\end{align}
$y\in B_1$ satisfies either $y=0$ or $B_{4\sigma}(y) \subset \Omega_2$,
\begin{align}
\Theta_{\Omega_{3 \sigma }(y)}(\A) \leq \delta \quad \mbox{and}\quad \frac{1}{|B_4|}\int_{\Omega_{4}}  \big(|\nabla \psi|^p 
+|\F|^{p'}\big) \, dx  \leq \delta,
\nonumber
\end{align}
and $u$ is a weak solution of \eqref{GE}
 satisfying
\begin{equation*}
 \|u\|_{L^\infty(\Omega_{4})} +\|\psi\|_{L^\infty(\Omega_{4})}\leq \frac{M}{\lambda\theta},\quad 
  \frac{1}{|B_4|}\int_{\Omega_{4 }}{|\nabla  u|^p \, dx}\leq 1,
 \quad\mbox{and}\quad
  \frac{1}{|B_{4\sigma}(y)|}\int_{\Omega_{4\sigma}(y)}{|\nabla  u|^p dx}\leq 1,
\end{equation*}
then there exists a function  $v\in  W^{1,p}(\Omega_{2 \sigma }(y))$ such that
\begin{align*}
\|\nabla  v\|_{L^\infty(\Omega_{2 \sigma}(y))}
\leq C(p,n, \Lambda)
\quad\mbox{and}\quad 
\frac{1}{|B_{2\sigma}(y)|} \int_{\Omega_{2\sigma }(y)}{|\nabla  u - \nabla  v|^p\, dx}\leq \e^p.
\end{align*}
\end{lemma}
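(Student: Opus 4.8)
The plan is to implement a three-step comparison chain: from $u$, the solution of the two-parameter equation \eqref{GE} with nonzero Dirichlet data $\psi$ and discontinuous-in-$(x,z)$ vector field, to a limiting equation posed on a half-ball with a constant-coefficient, autonomous vector field and zero boundary data, whose solution enjoys the boundary Lipschitz bound of Theorem~\ref{thm:bnd-lipschitz}. Concretely, I would first let $\tilde\A(x,z,\xi):=\lambda^{1-p}\A(x,\lambda\theta z,\lambda\xi)$ and note that $\tilde\A$ satisfies the same structural conditions \eqref{structural-reference-1}--\eqref{structural-reference-3} with the same $\Lambda$, $p$, $\omega$ (the rescaling of $z$ only rescales the modulus $\omega$, which is harmless since we only use $\omega(0)=0$ and monotonicity), and that the oscillation quantity $\Theta$ is invariant under this rescaling. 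So it suffices to prove the statement for $\tilde\A$ in place of $\A$, i.e. we may as well set $\lambda=\theta=1$ notationally but keep track only of the $L^\infty$ bound $\|u\|_\infty+\|\psi\|_\infty\le M/(\lambda\theta)$, which bounds $\|\lambda\theta u\|_\infty\le M$ so that the relevant values of the $z$-slot lie in $\overline\K\cap[-M,M]$ — this is exactly where $\Theta_{\Omega_r(y)}(\tilde\A)$ and condition \eqref{smallness-1} bite.

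\textbf{Step 1 (kill $\psi$ and freeze $z$).} Using the method of \cite{CC,CP,AM,BPS,NP}, I would compare $u$ on $\Omega_{3\sigma}(y)$ to the solution $u_1$ of $\div\,\tilde\A(x,(\lambda\theta u)_{\Omega_{3\sigma}(y)},\nabla u_1)=0$ in $\Omega_{3\sigma}(y)$ with $u_1=u$ on $\partial\Omega_{3\sigma}(y)$ — so the boundary data is no longer $\psi$ but $u$ itself, and the $z$-slot is frozen at the average $(\lambda\theta u)_{\Omega_{3\sigma}(y)}$, which lies in $[-M,M]$. Testing the difference of the two equations with $u-u_1$ and using the monotonicity consequence \eqref{structural-consequence} together with the continuity-in-$z$ bound \eqref{structural-reference-3} and the smallness of $\fint_{\Omega_4}(|\nabla\psi|^p+|\F|^{p'})\le\delta$, one gets $\fint_{\Omega_{3\sigma}(y)}|\nabla u-\nabla u_1|^p\,dx$ small, controlled by $\delta$ plus the oscillation of $\lambda\theta u$ on $\Omega_{3\sigma}(y)$ through $\omega$. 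The oscillation of $u$ is made small either by interior Hölder continuity (Theorem~\ref{classical-Holder}) when $B_{4\sigma}(y)\subset\Omega_2$, or by the global boundary Hölder estimate (Theorem~\ref{thm:inter-regularity}(ii), using the density condition \eqref{uniform-density-cond}) when $y=0$; the unnormalized $L^p$ bounds on $\nabla u$ let us bound $\|u\|_{L^\infty}$-type quantities, and here $\sigma$ small is what forces the oscillation small. This is Lemma~\ref{lm:step1}/\ref{lm:step2} in the paper's outline.

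\textbf{Step 2 (freeze $x$) and Step 3 (flatten the boundary).} Next I would replace the $x$-dependent frozen field $\tilde\A(x,\bar z,\cdot)$ by its average $\langle\tilde\A\rangle_{\Omega_{3\sigma}(y)}(\bar z,\cdot)=:\ba(\cdot)$, comparing $u_1$ to the solution $u_2$ of $\div\,\ba(\nabla u_2)=0$ with $u_2=u_1$ on $\partial\Omega_{3\sigma}(y)$; the error is controlled by $\Theta_{\Omega_{3\sigma}(y)}(\tilde\A)\le\delta$ via the same energy/monotonicity argument (a Hölder split of $\int|\A(x,\bar z,\nabla u_2)-\ba(\nabla u_2)|\,|\nabla u_1-\nabla u_2|$ using the growth bound \eqref{structural-reference-2} and higher integrability from Theorem~\ref{thm:inter-regularity}(i)). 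Now $\ba$ satisfies \eqref{structural-reference-1}--\eqref{structural-reference-2} and is autonomous. Finally, by the compactness/contradiction argument underlying Lemma~\ref{lm:step3}/\ref{lm:compactness}: if no good $v$ existed, one would get a sequence of domains $\Omega^{(k)}$ with $\delta_k\to0$, hence (by Reifenberg flatness \eqref{boundary-cond-at-zero}) converging to a half-space, and solutions converging weakly to a solution $w$ of $\div\,\ba_\infty(\nabla w)=0$ on a half-ball with zero boundary data on the flat part; Theorem~\ref{thm:bnd-lipschitz} gives $\|\nabla w\|_{L^\infty(B^+_{2\sigma})}\le C(p,n,\Lambda)$, and this uniform Lipschitz bound transferred back (extending $w$ by its boundary values, standard convergence of $p$-harmonic-type functions) contradicts the failure of the $\e^p$-closeness. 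Setting $v$ to be a suitable extension of this limiting/Step-3 solution on $\Omega_{2\sigma}(y)$ and collecting the three $L^p$-errors, each $\le C\e^p$ after choosing $\delta,\sigma$ small depending only on $\e,p,n,\omega,\Lambda,M$, yields the claim. The main obstacle is \emph{Step 1}: getting the oscillation of $\lambda\theta u$ small uniformly in the parameters $\lambda,\theta$ — the equation \eqref{GE} is not dilation-invariant, so one must exploit that all Hölder constants in Theorems~\ref{classical-Holder} and \ref{thm:inter-regularity}(ii) depend only on $p,n,\Lambda$ (not on $\lambda,\theta$ or $\|u\|_\infty$), and combine this with the normalized energy bounds $\fint_{\Omega_{4\sigma}(y)}|\nabla u|^p\le1$ to absorb the $\lambda\theta$ factor through the $L^\infty$ bound $\lambda\theta\|u\|_\infty\le M$; keeping the bookkeeping parameter-free throughout is the delicate point.
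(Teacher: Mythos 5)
Your overall architecture—a three-step comparison chain ending in a boundary Lipschitz bound obtained via compactness—is the right approach and matches the paper's Lemmas~\ref{lm:step1}--\ref{lm:step3} plus Lemma~\ref{lm:compactness}. The rescaling to $\tilde\A$, the observation that $\Theta$ is invariant under this rescaling, and the tracking of the $z$-range $\overline\K\cap[-M,M]$ are all handled correctly. But your Step 1 has a gap that propagates through the rest of the argument: you never actually subtract $\psi$, so the comparison functions do not vanish on $\partial\Omega$, and this is exactly what the later steps require.

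Concretely, you prescribe $u_1=u$ on $\partial\Omega_{3\sigma}(y)$ and say the boundary data ``is no longer $\psi$ but $u$ itself,'' but on the flat piece $\partial\Omega\cap B_{3\sigma}(y)$ (the relevant portion when $y=0$) one has $u=\psi$, so $u_1=\psi$ there, and the same passes to $u_2$. The compactness argument in Step 3 and the boundary Lipschitz estimate (Theorem~\ref{thm:bnd-lipschitz}) both require the comparison solution to vanish on the flat boundary; since $\psi$ is only bounded by $M/(\lambda\theta)$ and not small, you cannot conclude that the limiting function vanishes on $\{x_n=0\}$, and the $L^\infty$ gradient bound is lost. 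A second, related gap: you propose to control $\osc\,u$ via Theorems~\ref{classical-Holder} and~\ref{thm:inter-regularity}(ii), but $u$ solves an \emph{inhomogeneous} equation (right-hand side $\div\F$) and does not vanish on $\partial\Omega$, so neither H\"older estimate applies to $u$. The paper's remedy for both problems is the intermediate function $h$ of \eqref{eq-h}: $h$ solves the \emph{homogeneous} equation on the \emph{larger} domain $\Omega_4$ with boundary data $u-\psi$, so $h\equiv 0$ exactly on $B_4\cap\partial\Omega$. This feeds zero boundary data into the downstream functions $f$ and $w$, and it makes the decay $\osc_{\Omega_{4\sigma}(y)}h\lesssim\sigma^\beta\|h\|_{L^\infty(\Omega_4)}$ available via the homogeneous boundary H\"older estimate. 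The correct bookkeeping replaces the oscillation of $u$ by that of $h+\psi$ after noting $\|\nabla(u-h-\psi)\|_{L^p(\Omega_4)}$ is small, and then controls $\psi-\bar\psi$ by $\|\nabla\psi\|_{L^p}$ via Poincar\'e, as in \eqref{Sobolev}. To repair your proof, you should insert this $h$-step (compare $u$ to $h$ on $\Omega_4$ with data $u-\psi$, then freeze $z$ to pass to $f$ on $\Omega_{4\sigma}(y)$) before freezing $x$.
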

By translating and scaling, we obtain: 
\begin{proposition}
\label{lm:localized-compare-gradient}  Let $\A$ satisfy \eqref{structural-reference-1}--\eqref{structural-reference-3}, and 
$M>0$.
For any $\e\in (0,1]$, there exist small positive constants $\delta$ and $\sigma$  depending only on $\e$,   $p$,    $n$, 
$\omega$, $\Lambda$, and $M$ such that: 
if  $\Omega$ is  $(\delta,R)$-Reifenberg flat,  $ \lambda>0$, $\theta>0$, $\bar y\in \partial\Omega$,   $r\in (0,\frac{R}{2})$,
$y\in B_r(\bar y)$ satisfies either $y=\bar y$ or $B_{4\sigma r}(y) \subset \Omega_{2 r}(\bar y)$,
\begin{align}
\Theta_{\Omega_{3 \sigma r }( y)}(\A) \leq \delta \quad \mbox{and}\quad \frac{1}{|B_{4 r}(\bar y)|}\int_{\Omega_{4 r}(\bar y)}  
 \big(|\nabla \psi|^p| +\F|^{p'}\big) \, dx  
\leq \delta,\nonumber
\end{align}
and $u$ is a weak solution of \eqref{GE}
  satisfying
\begin{equation*}
 \|u\|_{L^\infty(\Omega_{4 r}(\bar y))} +\|\psi\|_{L^\infty(\Omega_{4 r}(\bar y))}\leq \frac{M}{\lambda\theta},
 \quad  \frac{1}{|B_{4 r}(\bar y)|}\int_{\Omega_{4 r}(\bar y)}{|\nabla  u|^p \, dx}\leq 1,
 \quad\mbox{and}\quad
  \frac{1}{|B_{4\sigma r}( y)|}\int_{\Omega_{4\sigma r}( y)}{|\nabla  u|^p dx}\leq 1,
\end{equation*}
then there exists a function  $v\in  W^{1,p}(\Omega_{2 \sigma r }( y))$ such that
\begin{align*}
\|\nabla  v\|_{L^\infty(\Omega_{2 \sigma r}( y))}
\leq C(p,n, \Lambda)
\quad\mbox{and}\quad 
\frac{1}{|B_{2\sigma r}( y)|} \int_{\Omega_{2\sigma r }( y)}{|\nabla  u - \nabla  v|^p\, dx}\leq \e^p.
\end{align*}
 \end{proposition}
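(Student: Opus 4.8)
The plan is to derive Proposition~\ref{lm:localized-compare-gradient} from Lemma~\ref{lm:local-boundary} by a standard translation-and-dilation argument, taking advantage of the fact that equations of the form \eqref{GE} are invariant under such transformations. Given the data of the proposition, with $\bar y\in\partial\Omega$ and $r\in(0,R/2)$, I would introduce the rescaled domain $\tilde\Omega:=\{x\in\R^n:\ \bar y + r x\in\Omega\}$ together with the rescaled functions $\tilde u(x):=u(\bar y+rx)/r$, $\tilde\psi(x):=\psi(\bar y+rx)/r$, $\tilde\F(x):=\F(\bar y+rx)$, and the rescaled vector field $\tilde\A(x,z,\xi):=\A(\bar y+rx,rz,\xi)$. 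One checks directly that $\tilde u$ is a weak solution of \eqref{GE} on $\tilde\Omega$ with data $\tilde\psi,\tilde\F,\tilde\A$ and the same parameters $\lambda,\theta$ (the scaling in $z$ is absorbed because $\A(\bar y+rx,\lambda\theta\, ru(\bar y+rx),\lambda\nabla u(\bar y+rx))=\tilde\A(x,\lambda\theta\,\tilde u(x),\lambda\nabla\tilde u(x))$, and the gradient scales so that $\nabla\tilde u(x)=\nabla u(\bar y+rx)$). Crucially, $\tilde\A$ satisfies \eqref{structural-reference-1}--\eqref{structural-reference-3} with exactly the same constants $\Lambda,p$ and modulus $\omega$, since those conditions are pointwise in $x$ and homogeneous in $\xi$, and the only change is the rescaled argument $rz$ which does not affect \eqref{structural-reference-3}.

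Next I would verify that the hypotheses of Lemma~\ref{lm:local-boundary} hold for the rescaled objects. Since $\bar y\in\partial\Omega$, we have $0\in\partial\tilde\Omega$. The $(\delta,R)$-Reifenberg flatness of $\Omega$ together with $r<R/2$ gives, after rescaling, the flatness condition \eqref{boundary-cond-at-zero} for $\tilde\Omega$ at scale $3\sigma$ (provided $3\sigma<1$, which we may assume), and it also yields the measure-density condition \eqref{uniform-density-cond} on $B_4\cap\partial\tilde\Omega$: indeed for a Reifenberg flat domain with small enough $\delta$ one has $|B_\rho(z)\setminus\Omega|\ge \tfrac13|B_\rho(z)|$ for boundary points $z$ at all scales up to a fixed multiple of $R$, and this property is scale-invariant. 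The point $\tilde y:=(y-\bar y)/r$ lies in $B_1$ and satisfies either $\tilde y=0$ or $B_{4\sigma}(\tilde y)\subset\tilde\Omega_2$, matching the dichotomy in the lemma. The BMO-type smallness $\Theta_{\Omega_{3\sigma r}(y)}(\A)\le\delta$ becomes $\Theta_{\tilde\Omega_{3\sigma}(\tilde y)}(\tilde\A)\le\delta$ because $\Theta$ is dimensionless and invariant under the rescaling (the averaging over $\Omega_{3\sigma r}(y)$ of the normalized oscillation $|\A-\langle\A\rangle|/|\xi|^{p-1}$ transforms into the corresponding average for $\tilde\A$ over $\tilde\Omega_{3\sigma}(\tilde y)$; the extra $rz$ argument only restricts $z$ to $\overline\K\cap[-M/(\lambda\theta r)\cdot\ldots]$, but the sup over $z\in\overline\K\cap[-M,M]$ in \eqref{def:Theta} is handled the same way after noting the $L^\infty$ bound below). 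The integral smallness condition on $|\nabla\psi|^p+|\F|^{p'}$ rescales precisely: $\nabla\tilde\psi(x)=\nabla\psi(\bar y+rx)$ and $\tilde\F(x)=\F(\bar y+rx)$, so $\fint_{\tilde\Omega_4}(|\nabla\tilde\psi|^p+|\tilde\F|^{p'})\,dx=\fint_{\Omega_{4r}(\bar y)}(|\nabla\psi|^p+|\F|^{p'})\,dx\le\delta$. Likewise the three normalizations $\|\tilde u\|_{L^\infty(\tilde\Omega_4)}+\|\tilde\psi\|_{L^\infty(\tilde\Omega_4)}\le M/(\lambda\theta)$ — here I would replace $M$ by $M$ itself since $\|u\|_{L^\infty(\Omega_{4r}(\bar y))}/r\le M/(\lambda\theta r)$, so one should actually run Lemma~\ref{lm:local-boundary} with the bound $\|\tilde u\|_{L^\infty}\le M/(\lambda\theta r)$; to keep the constants $\delta,\sigma$ independent of $r$ I would instead note that $\lambda\theta r\,\|\tilde u\|_{L^\infty}\le M$ forces working with the parameter $\lambda\theta$ replaced by $\lambda\theta r$, which is still an admissible positive parameter, so the lemma applies with the \emph{same} $\delta,\sigma$ that depend only on $\e,p,n,\omega,\Lambda,M$ — and the two $L^p$-normalizations of $\nabla\tilde u$ on $\tilde\Omega_4$ and $\tilde\Omega_{4\sigma}(\tilde y)$ follow from the corresponding hypotheses since $\fint|\nabla\tilde u|^p\,dx=\fint|\nabla u|^p\,dx$ over the matching balls.

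Applying Lemma~\ref{lm:local-boundary} then produces $\tilde v\in W^{1,p}(\tilde\Omega_{2\sigma}(\tilde y))$ with $\|\nabla\tilde v\|_{L^\infty(\tilde\Omega_{2\sigma}(\tilde y))}\le C(p,n,\Lambda)$ and $\fint_{\tilde\Omega_{2\sigma}(\tilde y)}|\nabla\tilde u-\nabla\tilde v|^p\,dx\le\e^p$. Undoing the scaling, define $v(x):=r\,\tilde v((x-\bar y)/r)$ on $\Omega_{2\sigma r}(y)$; then $\nabla v(x)=\nabla\tilde v((x-\bar y)/r)$, so $\|\nabla v\|_{L^\infty(\Omega_{2\sigma r}(y))}\le C(p,n,\Lambda)$ and, by the change of variables $x=\bar y+r x'$, $\fint_{\Omega_{2\sigma r}(y)}|\nabla u-\nabla v|^p\,dx=\fint_{\tilde\Omega_{2\sigma}(\tilde y)}|\nabla\tilde u-\nabla\tilde v|^p\,dx'\le\e^p$, which is exactly the assertion. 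The main obstacle in writing this out carefully is bookkeeping the parameters: one must make sure the $L^\infty$ constraint $\|u\|_{L^\infty}+\|\psi\|_{L^\infty}\le M/(\lambda\theta)$ transforms into one of the same shape $M/(\text{parameter})$ rather than one with an $r$-dependent right-hand side that would force $\delta,\sigma$ to depend on $r$; the clean fix, as indicated, is to view the rescaling as replacing the parameter pair $(\lambda,\theta)$ by $(\lambda,\theta r)$ (or equivalently absorbing $r$ into $\theta$), which is legitimate precisely because \eqref{GE} is stable under such a reparametrization and Lemma~\ref{lm:local-boundary} is uniform over all $\lambda,\theta>0$. Everything else is the routine verification that each hypothesis is scale-invariant, which I would state in a single displayed list of transformed quantities rather than belabor.
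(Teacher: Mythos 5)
Your overall strategy—translate to put $\bar y$ at the origin, dilate by $r^{-1}$, apply Lemma~\ref{lm:local-boundary} to the rescaled objects, and undo the scaling—is exactly the paper's approach. But the concrete rescaling you begin with, namely $\tilde\A(x,z,\xi):=\A(\bar y+rx,\,rz,\,\xi)$ with $\lambda,\theta$ unchanged, introduces a genuine problem that your proposal never cleanly resolves: $\tilde\A$ is then defined on $\tilde\Omega\times r^{-1}\overline\K\times\R^n$, not $\tilde\Omega\times\overline\K\times\R^n$, and more importantly
\[
\Theta_{\tilde\Omega_{3\sigma}(\tilde y)}(\tilde\A)
=\sup_{z\in\overline\K\cap[-M,M]}\frac{1}{|B_{3\sigma}(\tilde y)|}\int_{\tilde\Omega_{3\sigma}(\tilde y)}
\sup_{\xi\neq 0}\frac{|\A(\bar y+rx,\,rz,\,\xi)-\langle\A\rangle(rz,\xi)|}{|\xi|^{p-1}}\,dx
\]
is a supremum over $rz\in[-rM,rM]\cap r\overline\K$, which is \emph{not} the quantity $\Theta_{\Omega_{3\sigma r}(y)}(\A)$ controlled by hypothesis. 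Your parenthetical remark that the restriction of $z$ ``is handled the same way after noting the $L^\infty$ bound below'' does not establish the needed inequality; the two sups simply run over different $z$-ranges.

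You do identify the correct fix at the end of the second paragraph—absorb the dilation factor into the parameter $\theta$—but the proof should be built on that from the start rather than retrofitted. Set $\tilde\A(x,z,\xi):=\A(\bar y+rx,\,z,\,\xi)$ (second argument untouched), $\tilde u(x):=r^{-1}u(\bar y+rx)$, $\tilde\psi(x):=r^{-1}\psi(\bar y+rx)$, $\tilde\F(x):=\F(\bar y+rx)$, and $\tilde\theta:=\theta r$. Then $\A(\bar y+rx,\lambda\theta u(\bar y+rx),\lambda\nabla u(\bar y+rx))=\tilde\A(x,\lambda\tilde\theta\tilde u(x),\lambda\nabla\tilde u(x))$, so $\tilde u$ solves \eqref{GE} on $\tilde\Omega$ with parameters $(\lambda,\tilde\theta)$; the $L^\infty$ bound becomes $\|\tilde u\|_{L^\infty}+\|\tilde\psi\|_{L^\infty}\leq M/(\lambda\tilde\theta)$ of exactly the required shape; and because the $z$- and $\xi$-arguments are unchanged, $\Theta_{\tilde\Omega_{3\sigma}(\tilde y)}(\tilde\A)=\Theta_{\Omega_{3\sigma r}(y)}(\A)$ by a mere change of the spatial variable. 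This is precisely what the paper does, and it is essential that Lemma~\ref{lm:local-boundary} is uniform in $\theta>0$ so that $\delta,\sigma$ stay independent of $r$. You should also be explicit, as the paper is, that the $(\delta,R/r)$-Reifenberg flatness of $\tilde\Omega$ with $R/r>2$ and $\delta$ small yields both \eqref{boundary-cond-at-zero} and the measure-density condition \eqref{uniform-density-cond} via the (A)-property with constant $K(\delta)\to 1/2$, rather than just asserting them.
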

\begin{proof}
The result is obtained  by translating and scaling, and then  applying
Lemma~\ref{lm:local-boundary}. Precisely, let  $\tilde\Omega := \{r^{-1} (x -\bar y): \, x\in\Omega\}$ and 
$\tilde y :=r^{-1} (y -\bar y)\in B_1$. Then
$0\in \partial\tilde\Omega$ and $\tilde \Omega$ is  $(\delta,R/r)$-Reifenberg flat
with  $R/r>2$. Thus it follows from Definition~\ref{def:Re} that $\tilde \Omega$ 
satisfies the  so-called (A)-property: 
there exists a positive constant $K=K(\delta)$  such that
\begin{equation*}\label{A-property}
 K |B_\rho(z)|\leq  |B_\rho(z)\cap \tilde\Omega|\leq (1-K) |B_\rho(z)|
\end{equation*}
for all $z\in \partial\tilde\Omega$ and all $\rho\in (0,R/r)$. Moreover, $K(\delta) \to 1/2$ when $\delta\to 0^+$. 
As  $\delta>0$ is small, we deduce  that   $\tilde \Omega$ satisfies condition \eqref{uniform-density-cond}. By 
rotating  the  standard coordinate system $\{x_1,...,x_n\}$ if necessary,  we also have from Definition~\ref{def:Re} that
\begin{equation*}
 B_{ \rho} \cap \{x_n>  \rho\delta\}  \subset \tilde\Omega_{\rho} 
\subset B_{\rho}\cap \{x_n> -\rho\delta\}
\end{equation*}
for any $\rho\leq 2$.  In particular, condition \eqref{boundary-cond-at-zero} is verified for $\tilde\Omega$ as well. 
We next define
\[
\tilde\A(x, z, \xi) = \A(r x +\bar y, z,\xi),\quad \tilde\F(x)=\F(r x +\bar y), \quad \tilde u(x) = r^{-1} u(r x +\bar y),
\quad \tilde\psi(x) = r^{-1} \psi(r x +\bar y), \quad\mbox{and }\, \,  \tilde\theta = \theta r. 
\]
Then $\tilde u$ is a weak solution of  $\div \Big[\frac{\tilde\A(x,\lambda\tilde\theta \tilde u, \lambda \nabla \tilde u)}{\lambda^{p-1}}\Big] = \div \tilde \F\,$ in 
$\tilde \Omega$ and $\tilde u=\tilde \psi$ on $\partial\tilde \Omega$. Moreover, 
\begin{align*}
 &\|\tilde u\|_{L^\infty(\tilde\Omega)} +\|\tilde\psi\|_{L^\infty(\tilde\Omega)}\leq \frac{M}{\lambda\tilde\theta}, \quad \frac{1}{|B_{4}|}
  \int_{\tilde\Omega_{4}}{|\nabla \tilde u|^p \, dx}= \frac{1}{|B_{4 r}(\bar y)|}
  \int_{\Omega_{4 r}(\bar y)}{|\nabla  u|^p \, dz}\leq 1,\\
  &\frac{1}{|B_{4\sigma}(\tilde y)|}
  \int_{\tilde\Omega_{4\sigma}(\tilde y)}{|\nabla  \tilde u|^p \, dx}= \frac{1}{|B_{4\sigma r}( y)|}
  \int_{\Omega_{4 \sigma r}( y)}{|\nabla  u|^p \, dz}\leq 1,
  \quad \Theta_{\tilde\Omega_{3\sigma}(\tilde y)}(\tilde \A)=\Theta_{\Omega_{3\sigma r}( y)}(\A)\leq \delta,\\
  &\mbox{and}\quad 
  \frac{1}{|B_{4}|}  \int_{\tilde\Omega_{4}} \big( |\nabla \tilde\psi|^p + |\tilde \F|^{p'}\big) \, dx
 =\frac{1}{|B_{4 r}(\bar y)|}  \int_{\Omega_{4 r}(\bar y)} \big( |\nabla \psi|^p +|\F|^{p'}\big) \, dz \leq \delta.
\end{align*}
Therefore, we can apply Lemma~\ref{lm:local-boundary} to conclude  that
there exists a function  $\tilde v\in  W^{1,p}(\tilde\Omega_{2 \sigma }(\tilde y))$ such that
\begin{equation*}
\|\nabla  \tilde v\|_{L^\infty(\tilde\Omega_{2 \sigma}(\tilde y))}
\leq C(p,n, \Lambda)
\quad\mbox{and}\quad 
\frac{1}{|B_{2\sigma}(\tilde y)|} \int_{\tilde\Omega_{2\sigma }(\tilde y)}{|\nabla  \tilde u - \nabla \tilde  v|^p\, dx}\leq \e^p.
\end{equation*}
Let $v(x):= r \tilde v(r^{-1} (x-\bar y))$. Then  we infer  that
\begin{align*}
\|\nabla  v\|_{L^\infty(\Omega_{2 \sigma r}(y))}
\leq C(p,n, \Lambda)
\quad\mbox{and}\quad 
\frac{1}{|B_{2\sigma r}( y)|} \int_{\Omega_{2\sigma r }(y)}{|\nabla  u - \nabla  v|^p\, dx}\leq \e^p.
\end{align*}
\end{proof}

 The rest of this section is devoted to proving Lemma~\ref{lm:local-boundary}. The first step is: 
\begin{lemma}\label{lm:step1}
For any $\e>0$, there exist small positive constants  $\delta$ and $\sigma$ depending only on $\e$,  $p$,  
$n$, $\omega$, $\Lambda$,   and $M$ such that: 
if  $ \lambda>0$, $\theta>0$, $\Omega$ satisfies \eqref{uniform-density-cond}, 
$y\in B_1$ satisfies either $y=0$ or $B_{4\sigma}(y) \subset \Omega_2$, 
$ \frac{1}{|B_4|}  \int_{\Omega_{4}}   \big(|\nabla \psi|^p 
+|\F|^{p'}\big) \, dx  \leq \delta$,
and $u$ is a weak solution of 
\eqref{GE} 
satisfying
\begin{equation*}
 \|u\|_{L^\infty(\Omega_{4})}+ \|\psi\|_{L^\infty(\Omega_{4})}\leq \frac{M}{\lambda\theta}
 \quad\mbox{ and }\quad \frac{1}{|B_4|} \int_{\Omega_{4}}{|\nabla  u|^p \, dx}\leq 1,
\end{equation*}
then 
\begin{equation*}
\frac{1}{|B_{4\sigma }(y)|} \int_{\Omega_{4\sigma }(y)}{|\nabla  u - \nabla  f|^p\, dx}\leq \e^p,
\end{equation*}
where $f$ is a weak solution of 
\begin{equation} \label{eq-f} 
 \left\{
 \begin{alignedat}{2}
  &\div \Big[\frac{\A(x,\lambda\theta \bar{u}_{\Omega_{4 \sigma}(y)}, \lambda \nabla f)}{\lambda^{p-1}}\Big]=0\quad \mbox{in}\quad \Omega_{ 4 \sigma}(y),\\\
&f=h \qquad\qquad\qquad\qquad \qquad\,\,\,\,\,\mbox{on}\quad \partial \Omega_{ 4 \sigma}(y)
 \end{alignedat} 
  \right.
\end{equation}
with 
$h$ being a weak solution of 
\begin{equation} \label{eq-h}
 \left\{
 \begin{alignedat}{2}
  &\div \Big[\frac{\A(x,\lambda\theta  u, \lambda \nabla h)}{\lambda^{p-1}}\Big]=0\quad \mbox{in}\quad \Omega_{4},\\\
&h= u-\psi \qquad\qquad\qquad\quad \mbox{on}\quad \partial \Omega_{4}.
 \end{alignedat} 
  \right.
\end{equation}
\end{lemma}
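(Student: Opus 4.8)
The plan is a two-step comparison through the auxiliary functions $h$ and $f$, after rewriting \eqref{GE} with the rescaled field $\widetilde\A(x,z,\xi):=\lambda^{-(p-1)}\A(x,\lambda\theta z,\lambda\xi)$, which inherits \eqref{structural-reference-1}--\eqref{structural-reference-2} and \eqref{structural-consequence} with the same constants and satisfies \eqref{structural-reference-3} with $\omega(\cdot)$ replaced by $\omega(\lambda\theta\,\cdot)$. Then $u,h,f$ solve $\div\widetilde\A(x,u,\nabla u)=\div\F$ on $\Omega_4$, $\div\widetilde\A(x,u,\nabla h)=0$ on $\Omega_4$ with $h=u-\psi$ on $\partial\Omega_4$, and $\div\widetilde\A(x,\bar u_{\Omega_{4\sigma}(y)},\nabla f)=0$ on $\Omega_{4\sigma}(y)$ with $f=h$ on $\partial\Omega_{4\sigma}(y)$. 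Since
\[
\fint_{\Omega_{4\sigma}(y)}|\nabla u-\nabla f|^p\le C(p)\Big(\fint_{\Omega_{4\sigma}(y)}|\nabla u-\nabla h|^p+\fint_{\Omega_{4\sigma}(y)}|\nabla h-\nabla f|^p\Big),
\]
it suffices to bound each term by a prescribed small multiple of $\e^p$.

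\emph{Step 1 ($u\to h$).} On $\Omega_4$ both equations have the same principal part $\xi\mapsto\widetilde\A(x,u(x),\xi)$, and $h=u-\psi$ on $\partial\Omega_4$ (so $h=0$ on $B_4\cap\partial\Omega$), hence $u-h-\psi\in W^{1,p}_0(\Omega_4)$. Using $u-h-\psi$ as a test function in the difference of the two weak formulations, I would apply the monotonicity \eqref{structural-consequence}, the growth \eqref{structural-reference-2}, Young's inequality, and the energy bound of Proposition~\ref{prop:energy} for $h$ together with $\fint_{\Omega_4}|\nabla u|^p\le1$, to get
\[
\fint_{\Omega_4}|\nabla u-\nabla h|^p\,dx\le C(p,n,\Lambda)\Big(\fint_{\Omega_4}\big(|\nabla\psi|^p+|\F|^{p'}\big)dx\Big)^{\gamma}\le C\,\delta^{\gamma}
\]
for some $\gamma=\gamma(p)>0$ (one may take $\gamma=1/p$ when $p\ge2$; for $1<p<2$ one interpolates via H\"older between the degenerate monotonicity term and $\|\nabla u\|_{L^p}+\|\nabla h\|_{L^p}$). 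Since $|B_4|/|B_{4\sigma}(y)|=\sigma^{-n}$, this yields $\fint_{\Omega_{4\sigma}(y)}|\nabla u-\nabla h|^p\le C\sigma^{-n}\delta^{\gamma}$, which is made $\le\tfrac14\e^p$ by choosing $\delta$ small depending on $\e$, $\sigma$, $p$, $n$, $\Lambda$.

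\emph{Step 2 ($h\to f$), the crux.} On $\Omega_{4\sigma}(y)$ the equations for $h$ and $f$ share the boundary data and differ only in the $z$-argument of the principal part, which is $u(x)$ for $h$ and the constant $\bar u_{\Omega_{4\sigma}(y)}$ for $f$. Testing the difference with $h-f\in W^{1,p}_0(\Omega_{4\sigma}(y))$ and using \eqref{structural-consequence} on the left and the rescaled form of \eqref{structural-reference-3} on the right gives
\[
\int_{\Omega_{4\sigma}(y)}|\nabla h-\nabla f|^p\,dx\le C\int_{\Omega_{4\sigma}(y)}|\nabla h|^p\,\omega\big(\lambda\theta\,|u(x)-\bar u_{\Omega_{4\sigma}(y)}|\big)^{p'}dx
\]
(for $p\ge2$; the case $1<p<2$ is analogous from the degenerate half of \eqref{structural-consequence}). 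The whole point is to make the right-hand side a small multiple of $\e^p|B_{4\sigma}(y)|$ \emph{uniformly in $\lambda,\theta$}. For this I would use that $h$ solves a homogeneous equation of type \eqref{simple-structural} and vanishes on $B_4\cap\partial\Omega$: the comparison principle gives $\|h\|_{L^\infty(\Omega_4)}\le\|u\|_{L^\infty(\Omega_4)}+\|\psi\|_{L^\infty(\Omega_4)}\le M/(\lambda\theta)$, and combining the interior H\"older estimate (Theorem~\ref{classical-Holder}) with the boundary H\"older estimate (Theorem~\ref{thm:inter-regularity}(ii), whose density hypothesis is \eqref{uniform-density-cond}) yields
\[
\underset{\Omega_{4\sigma}(y)}{\osc}\,h\le C\sigma^{\beta}\,\|h\|_{L^\infty(\Omega_4)}\le C\sigma^{\beta}\,\frac{M}{\lambda\theta},\qquad\text{hence}\qquad\lambda\theta\,\underset{\Omega_{4\sigma}(y)}{\osc}\,h\le CM\sigma^{\beta}.
\]
Writing $u-\bar u_{\Omega_{4\sigma}(y)}=(h-\bar h_{\Omega_{4\sigma}(y)})+\big((u-h)-\overline{(u-h)}_{\Omega_{4\sigma}(y)}\big)$ and splitting $\Omega_{4\sigma}(y)$ at a threshold $\eta$ on the second term: on the ``good'' set the modulus is $\le\omega(CM\sigma^\beta+\eta)$, which is small once $\sigma$ and then $\eta$ are chosen small; on the complement it is bounded by the fixed number $\omega(2M)$ (since $\lambda\theta|u-\bar u_{\Omega_{4\sigma}(y)}|\le2\lambda\theta\|u\|_{L^\infty(\Omega_4)}\le2M$), while the complement has small measure by Chebyshev's inequality, the Poincar\'e inequality, and the $W^{1,p}$-closeness of $u$ and $h$ from Step 1; together with the higher integrability of $\nabla h$ (Theorem~\ref{thm:inter-regularity}(i)) and the energy bound $\int_{\Omega_{4\sigma}(y)}|\nabla h|^p\le C\int_{\Omega_4}(|\nabla u|^p+|\nabla\psi|^p)\le C|B_4|$ this makes the contribution of the complement negligible. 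Fixing the constants in the order $\sigma$, then $\eta$, then $\delta$ — all depending only on $\e,p,n,\omega,\Lambda,M$ — finishes the proof.

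The main obstacle is exactly Step 2: extracting smallness of $\omega(\lambda\theta|u-\bar u_{\Omega_{4\sigma}(y)}|)$ with no a priori control on the size of $\lambda\theta$. This is the reason for routing through the homogeneous comparison function $h$ — for which the interior/boundary H\"older and higher-integrability estimates of Subsection~\ref{sub:classical} apply — and for the normalization $\|u\|_{L^\infty(\Omega_4)}+\|\psi\|_{L^\infty(\Omega_4)}\le M/(\lambda\theta)$, which is precisely what turns ``$\lambda\theta$ times an oscillation of $h$'' into the parameter-free quantity $M\sigma^{\beta}$.
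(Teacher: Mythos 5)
Your overall architecture is the same as the paper's: compare $u$ to $h$ via the test function $u-\psi-h$ and monotonicity; then compare $h$ to $f$ by testing with $h-f$, using \eqref{structural-reference-3} to control the frozen $z$-argument, with the oscillation of $h$ controlled through Theorems~\ref{classical-Holder} and \ref{thm:inter-regularity}(ii) and the normalization $\|h\|_{L^\infty(\Omega_4)}\le M/(\lambda\theta)$. However, there is a concrete gap in Step 2 that the paper handles and your proposal does not. When you split the quantity $\int_{\Omega_{4\sigma}(y)}\omega(\lambda\theta|u-\bar u|)^{p'}|\nabla h|^p\,dx$ at a threshold $\eta$, Chebyshev together with Poincar\'e and Step~1 gives for the ``bad'' set $\{\lambda\theta\,|(u-h)-\overline{(u-h)}|>\eta\}$ only
\[
|\mathrm{bad}|\;\le\;\frac{(\lambda\theta)^p}{\eta^p}\int_{\Omega_{4\sigma}(y)}\big|(u-h)-\overline{(u-h)}\big|^p\,dx\;\le\;\frac{(\lambda\theta)^p}{\eta^p}\,C\,\sigma^p\!\int_{\Omega_4}|\nabla(u-h)|^p\,dx.
\]
The factor $(\lambda\theta)^p$ is \emph{not} controlled by any hypothesis — $\lambda$ and $\theta$ may be arbitrarily large — so the claim that ``the complement has small measure'' and that its contribution is ``negligible'' is unjustified as written. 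The energy bound you invoke, $\int_{\Omega_{4\sigma}(y)}|\nabla h|^p\le C\int_{\Omega_4}(|\nabla u|^p+|\nabla\psi|^p)\,dx$, is parameter-independent and cannot compensate for an unbounded $(\lambda\theta)^p$.

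The paper closes this gap by an explicit dichotomy in $\lambda\theta$. It first derives a crude bound via a Caccioppoli estimate applied to $h$ (testing \eqref{eq-h} with $h\phi^p$), namely
\[
\frac{1}{|B_{4\sigma}(y)|}\int_{\Omega_{4\sigma}(y)}|\nabla(h-f)|^p\,dx\;\le\;\frac{C_*}{\sigma^n}\Big(\frac{M}{\lambda\theta}\Big)^p,
\]
which already gives the conclusion when $\lambda\theta$ is large. Only in the complementary regime \eqref{small-para}, where $\lambda\theta$ is bounded above by a quantity depending on $M$, $\sigma$, $\e$, does the paper run the oscillation-plus-Chebyshev argument; there the surviving $(\lambda\theta)^p$ factor is harmless because it is dominated by $2C_*\sigma^{-n}(M/\e)^p$. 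Without this case distinction (or an equivalent mechanism such as replacing your global energy bound for $\nabla h$ by the Caccioppoli bound $\int_{\Omega_2}|\nabla h|^p\le C\int_{\Omega_4}|h|^p\le C(M/\lambda\theta)^p|B_4|$ and exploiting its decay in $\lambda\theta$), your Step~2 does not produce an estimate that is uniform in $\lambda$ and $\theta$ — which is precisely what Lemma~\ref{lm:step1} must deliver for the two-parameter scaling scheme.
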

\begin{proof}
We only present the proof for   $p\geq 2$ using an idea in \cite{AM,BPS}. 
The argument for the case $1<p<2$ is similar with some 
slight adjustments which can be found in  \cite{BPS, DiN}. For convenience, let $\tilde \A(x, z,\xi) := \frac{\A(x,\lambda \theta z, \lambda \xi)}{\lambda^{p-1}}$.  We write
\[
\nabla u -\nabla f = \nabla (u-h) +\nabla (h-f)
\]
and will estimate $\| \nabla (u-h)\|_{L^p(\Omega_{4\sigma}(y))}$ and  $\| \nabla (h-f)\|_{L^p(\Omega_{4\sigma}(y))}$.
   By using $u-\psi-h$ as a test function in the equations for $u$ and $h$ we have
\begin{equation*}
\int_{\Omega_4}\langle \tilde\A(x,u,\nabla u)- \tilde \A(x,u,\nabla h), \nabla(u-\psi-h) \rangle \, dx
= \int_{\Omega_4}\langle \F, \nabla(u-\psi-h) \rangle \, dx
\end{equation*}
yielding
\begin{equation*}
\int_{\Omega_4}\langle \tilde\A(x,u,\nabla u)- \tilde \A(x,u,\nabla h), \nabla(u-h) \rangle \, dx
\leq \Lambda \int_{\Omega_4} |\nabla\psi|\big[ |\nabla u|^{p-1} + |\nabla h|^{p-1}\big] \, dx +
\int_{\Omega_4}|\F|\Big( |\nabla(u-h)| +|\nabla \psi|\Big) \, dx.
\end{equation*}
We then  use \eqref{structural-consequence} to bound the above left hand side from below. As a consequence, we obtain
\begin{equation*}
\int_{\Omega_4}| \nabla(u-h) |^p dx\leq C(p,\Lambda) \Bigg[
\int_{\Omega_4} |\nabla\psi|\big[ |\nabla u|^{p-1} + |\nabla h|^{p-1}\big] \, dx +
\int_{\Omega_4}|\F|\Big( |\nabla(u-h)| +|\nabla \psi|\Big)  \, dx\Bigg].
\end{equation*}
Hence  we infer from Young and H\"older  inequalities, and the energy estimate in Proposition~\ref{prop:energy}  that
\begin{align*}
 \int_{\Omega_4}| \nabla(u-h) |^p dx
 &\leq C \Bigg[ \|\nabla \psi\|_{L^p(\Omega_4)}
 \Big( \|\nabla u\|_{L^p(\Omega_4)}^{p-1} + \|\nabla h \|_{L^p(\Omega_4)}^{p-1} +\|\F\|_{L^{p'}(\Omega_4)}
 \Big) +  \int_{\Omega_4}  | \F|^{p'} dx \Bigg]\\
 &\leq C \Bigg[ \|\nabla \psi\|_{L^p(\Omega_4)}
 \Big( \|\nabla u\|_{L^p(\Omega_4)}^{p-1} + \|\nabla (u-\psi) \|_{L^p(\Omega_4)}^{p-1} +\|\F\|_{L^{p'}(\Omega_4)}
 \Big) +  \int_{\Omega_4}  | \F|^{p'} dx \Bigg]\\
 &\leq C \Bigg[ \|\nabla \psi\|_{L^p(\Omega_4)}
 \Big( \|\nabla u\|_{L^p(\Omega_4)}^{p-1} + \|\nabla \psi \|_{L^p(\Omega_4)}^{p-1} +\|\F\|_{L^{p'}(\Omega_4)}
 \Big) +  \int_{\Omega_4}  | \F|^{p'} dx \Bigg].
\end{align*}
Using the assumptions we then obtain
\begin{align}\label{u-h-0}
 \int_{\Omega_4}| \nabla(u-h) |^p dx\leq C \Big[
 \|\nabla \psi\|_{L^p(\Omega_4)} +
 \int_{\Omega_4}  | \F|^{p'} dx \Big],
\end{align}
which together with the fact that $B_{4\sigma}(y)\subset B_2$ implies that
\begin{equation}\label{u-h}\frac{1}{|B_{4\sigma}(y)|}
\int_{\Omega_{4 \sigma}(y)}| \nabla(u-h) |^p dx\leq \frac{C }{\sigma^n} \Big[
 \|\nabla \psi\|_{L^p(\Omega_4)} +
 \int_{\Omega_4}  | \F|^{p'} dx \Big].
\end{equation}
  By letting $m:=\bar u_{\Omega_{4 \sigma}(y)}$ and using $h-f$ as a test function in the equations for $h$ and $f$,  we have
\begin{equation*}
\int_{\Omega_{4 \sigma}(y)}\langle \tilde \A(x,m,\nabla f), \nabla(h-f) \rangle \, dx = \int_{\Omega_{4 \sigma}(y)}\langle \tilde \A(x,u,\nabla h), \nabla(h-f) \rangle \, dx.
\end{equation*}
This together with  \eqref{structural-consequence} gives
\begin{align}\label{est-nabla-h-f}
\int_{\Omega_{4 \sigma}(y)} |\nabla (h-f)|^p\, dx 
&\leq 4^{p-1}\Lambda \int_{\Omega_{4 \sigma}(y)}\langle \tilde\A(x,m,\nabla h)- \tilde \A(x,
m,\nabla f), \nabla(h-f) \rangle \, dx\nonumber\\
&= 4^{p-1}\Lambda \int_{\Omega_{4 \sigma}(y)}\langle \tilde\A(x,m,\nabla h)- \tilde \A(x,u,\nabla h), \nabla(h-f) \rangle \, dx\nonumber\\
&\leq  4^{p-1}\Lambda \int_{\Omega_{4 \sigma}(y)}\min{\big\{2\Lambda, \omega(\lambda \theta |u-
m|)\big\}}\,\, |\nabla h|^{p-1} |\nabla(h-f)| \, dx.
\end{align}
As a consequence of \eqref{est-nabla-h-f} and Young's inequality, we obtain
\[
\int_{\Omega_{4\sigma}(y)} |\nabla (h-f)|^p\, dx \leq C(p,\Lambda) \int_{\Omega_{4 \sigma}(y)} |\nabla h|^p \, dx.
\]
Let $\phi \in C_0^\infty(B_4)$ be the standard cutoff function satisfying $0\leq \phi\leq 1$, $\phi=1$ in $B_2$, and $|\nabla \phi|\leq C$.
Then it follows  by taking  $h \phi^p$ as a test function in  equation \eqref{eq-h} for $h$ that 
$\int_{\Omega_{2}} |\nabla h|^p \, dx \leq C  \int_{\Omega_{4}} |h|^p\, dx$.
Thus by combining with the above estimate and the fact $\Omega_{4 \sigma}(y)\subset \Omega_2$  we conclude that
\begin{equation*}
\frac{1}{|B_{4\sigma}(y)|} \int_{\Omega_{4\sigma}(y)} |\nabla (h-f)|^p\, dx \leq  \frac{C}{\sigma^n} \int_{\Omega_{4}} |h|^p\, dx\leq 
\frac{ C_*}{\sigma^n}   \|h\|_{L^\infty(\Omega_4)}^p\leq \frac{C_*}{\sigma^n}    \|u-\psi\|_{L^\infty(\Omega_4)}^p\leq \frac{C_*}{\sigma^n} \Big(\frac{M}{\lambda \theta}\Big)^p.
\end{equation*}
This  together with \eqref{u-h} gives  the desired conclusion if $ \frac{C_*}{\sigma^n}  \Big(\frac{M}{\lambda \theta }\Big)^p 
\leq \e^p/2$. We hence only need to consider the case \begin{equation}\label{small-para} 
\frac{C_*}{\sigma^n}    \Big(\frac{M}{\lambda \theta }\Big)^p >\frac{\e^p}{2}.
\end{equation}
 For this, note first that \eqref{u-h-0} and the assumption yield
\begin{equation*}
\|\nabla h\|_{L^p(\Omega_4)} \leq \|\nabla (h-u)\|_{L^p(\Omega_4)} +\|\nabla u\|_{L^p(\Omega_4)}\leq 
C\Bigg[\Big(\|\nabla \psi\|_{L^p(\Omega_4)}+ \int_{\Omega_4} |\F|^{p'}dx\Big)^{\frac1p}  +1 \Bigg]\leq C.
\end{equation*}
As  $h=0$ on $B_4 \cap \partial \Omega$ and $\Omega$ satisfies \eqref{uniform-density-cond},  
we can use  this estimate together with the higher integrability for $\nabla h$ given by Theorem~\ref{thm:inter-regularity} to conclude that
\begin{equation}\label{control-Lp0-Dh}
\Big(\frac{1}{|B_{4\sigma}(y)|} \int_{\Omega_{4\sigma}(y)}  |\nabla h|^{p_0} dx\Big)^{\frac{1}{p_0}} 
\leq C\,  \Big(\frac{1}{|B_{1}(y)|} \int_{\Omega_{1}(y)}  |\nabla h|^{p}  dx\Big)^{\frac{1}{p}}\leq C
\end{equation}
with $p_0>p$ and $C>0$ depending only on $p$, $n$, and $\Lambda$. 
We deduce from
 \eqref{est-nabla-h-f}, Young and H\"older inequalities, and \eqref{control-Lp0-Dh}  that 
\begin{align*}
\frac{1}{|B_{4\sigma}(y)|} \int_{\Omega_{4\sigma}(y)} |\nabla (h-f)|^p\, dx 
&\leq C\frac{1}{|B_{4\sigma}(y)|} \int_{\Omega_{4\sigma}(y)} \omega(\lambda \theta |u-m|)^{p'}\, |\nabla h|^p  dx\\
&\leq C
\Big[\frac{1}{|B_{4\sigma}(y)|} \int_{\Omega_{4 \sigma}(y)} \omega(\lambda \theta |u-m|)^{\frac{p' p_0}{p_0 - p}}  dx\Big]^{\frac{p_0 - p}{p_0}} \Big[\frac{1}{|B_{4\sigma}(y)|} \int_{\Omega_{4\sigma}(y)}  |\nabla h|^{p_0} dx\Big]^{\frac{p}{p_0}}\\
&\leq  C \Big[\frac{1}{|B_{4\sigma}(y)|} 
\int_{\Omega_{4 \sigma}(y)} \omega(\lambda \theta |u-m|)^{\frac{p' p_0}{p_0 - p}}  dx
\Big]^{\frac{p_0 - p}{p_0}}.
\end{align*}
But  for any $\gamma>0$, we have 
\begin{align*}
\int_{\Omega_{4 \sigma}(y)} \omega(\lambda \theta |u-m|)^{\frac{p' p_0}{p_0 - p}}  dx
&= \int_{\{\Omega_{4\sigma}(y): \lambda \theta |u-m| \leq \gamma  \}}
\omega(\lambda \theta |u-m|)^{\frac{p' p_0}{p_0 - p}}  dx +\int_{\{\Omega_{4 \sigma}(y): \lambda \theta |u-m| > \gamma  \}} 
\omega(\lambda \theta |u-m|)^{\frac{p' p_0}{p_0 - p}}  dx \\
&\leq |\Omega_{4\sigma}(y)| \, \omega(\gamma)^{\frac{p' p_0}{p_0 - p}}  +\frac{\omega(2 M)^{\frac{p' p_0}{p_0 - p}} }{\gamma^p} 
\int_{\Omega_{4 \sigma}(y)} \big(\lambda \theta |u-m|\big)^p dx.
\end{align*}
Therefore, we infer that
\begin{equation}\label{est-delta}
\frac{1}{|B_{4\sigma}(y)|} \int_{\Omega_{4 \sigma}(y)} |\nabla (h-f)|^p\, dx 
\leq C  \omega(\gamma)^{p'} + C \frac{\omega(2 M)^{p'}}{\gamma^{\frac{p(p_0 -  p)}{p_0}}}  
\Big[\frac{(\lambda \theta)^p}{|B_{4\sigma}(y)|}\int_{\Omega_{4 \sigma}(y)} 
|u-m|^p dx\Big]^{\frac{p_0 - p}{p_0}}
\end{equation}
for all $\gamma>0$. Let us now estimate the  last integral in \eqref{est-delta}. As
\begin{align*}
\|u-m\|_{L^p(\Omega_{4\sigma}(y) )}
&\leq  \|u-(h+\psi)\|_{L^p(\Omega_{4\sigma}(y) )}
+  \|(h+\psi)-\overline{(h+\psi)}_{\Omega_{4 \sigma}(y)}\|_{L^p(\Omega_{4\sigma}(y) )}
+  \|\overline{(h+\psi)}_{\Omega_{4 \sigma}(y)} -\bar u_{\Omega_{4 \sigma}(y)}\|_{L^p(\Omega_{4\sigma}(y) )}\\
&\leq 2  \|u-(h+\psi)\|_{L^p(\Omega_4)}
+  \|\psi-\bar\psi_{\Omega_{4 \sigma}(y)}\|_{L^p(\Omega_{4\sigma}(y) )}
+  \|h-\bar h_{\Omega_{4 \sigma}(y)}\|_{L^p(\Omega_{4\sigma}(y) )},
\end{align*}
it follows from  Sobolev and Poincar\'e inequalities that
\begin{align}\label{Sobolev}
\frac{1}{|B_{4\sigma}(y)|} 
\int_{\Omega_{4\sigma}(y) }|u-m|^p dx
 \leq C  \Big[ \sigma^{-n}\int_{\Omega_{4}} |\nabla (u- h-\psi)|^p dx 
+\frac{\sigma^p}{|B_{4\sigma}(y)|}
\int_{\Omega_{4 \sigma}(y)} |\nabla \psi|^p dx +
\big(\underset{\Omega_{ 4\sigma}(y)}{\osc} h \big)^p\Big]. 
\end{align}
We  now use the fact $h=0$ on $B_4 \cap \partial \Omega$ and $\Omega$ satisfies \eqref{uniform-density-cond}  to estimate the oscillation of $h$ when    $y=0$ or $B_{ 4\sigma}(y)\subset \Omega_2$. In the first
case, we can directly employ  Theorem~\ref{thm:inter-regularity} to get
\begin{equation}\label{b-case}
 \underset{\Omega_{ 4\sigma}(y)}{\osc} h 
 =\underset{\Omega_{ 4\sigma}}{\osc} \, h \leq C\big(\frac{4 \sigma}{2}\big)^{\beta }  \| h\|_{L^\infty(\Omega_2)}
 \leq C \sigma^{\beta }  \| h\|_{L^\infty(\Omega_4)}.
\end{equation}
In the second case, let $r:=\dist(y,\partial\Omega)=|y-x_0|\leq |y|<1$ for some $x_0\in \partial\Omega\cap B_2$. Then as
$B_r(y)\subset B_{2r}(x_0)$, we have  from
Theorem~\ref{classical-Holder} and  Theorem~\ref{thm:inter-regularity} that
\begin{equation}\label{semi-b-case}
\underset{\Omega_{ 4\sigma}(y)}{\osc} h=\underset{B_{ 4\sigma}(y)}{\osc} h
\leq C \Big(\frac{4\sigma }{r}\Big)^\beta \underset{B_{ r}(y)}{\osc} h
\leq C \Big(\frac{\sigma }{r}\Big)^\beta \underset{\Omega_{ 2r}(x_0)}{\osc} h
\leq C \Big(\frac{\sigma }{r}\Big)^\beta \Big(\frac{2r }{2}\Big)^\beta\|h\|_{L^\infty(\Omega_{2}(x_0))}
 \leq  C \sigma^\beta\|h\|_{L^\infty(\Omega_4)}.
\end{equation}
From \eqref{Sobolev}--\eqref{semi-b-case}, the fact $ \| h\|_{L^\infty(\Omega_4)}\leq M/\lambda\theta$, and \eqref{small-para}, we obtain
\begin{align*}
\frac{(\lambda \theta)^p}{|B_{4\sigma}(y)|}\int_{\Omega_{4 \sigma} (y)}|u-m|^p\, dx 
\leq C  \Bigg[\sigma^{-2 n}\big(\frac{M }{\e }\big)^p  \Big(\int_{\Omega_{4}} |\nabla (u- h)|^p\, dx  
+\int_{\Omega_{4}} |\nabla \psi|^p\, dx\Big)
+\big(M \sigma^{\beta } \big)^p \Bigg]. 
\end{align*}
Plugging this estimate into \eqref{est-delta} gives 
\begin{align*}
&\frac{1}{|B_{4\sigma}(y)|} \int_{\Omega_{4\sigma}(y)} |\nabla (h-f)|^p\, dx \\
&\leq  C  \omega(\gamma)^{p'}  +  C  M^{\frac{p(p_0 -  p)}{p_0}} \omega(2 M)^{p'} \left[\frac{\sigma^{-2n} }{(\gamma \e  )^p} 
\Big(\int_{\Omega_{4}} |\nabla (u- h)|^p\, dx +
\int_{\Omega_{4}} |\nabla\psi|^p\, dx\Big)
+\big(\frac{\sigma^{\beta }}{\gamma }\big)^p
 \right]^{\frac{p_0 -  p}{p_0}}.
\end{align*}
By combining this with \eqref{u-h} and using \eqref{u-h-0} we obtain 
\begin{align*}
\frac{1}{|B_{4\sigma}(y)|} \int_{\Omega_{4\sigma}(y)} |\nabla (u-f)|^p\, dx
&\leq \frac{C}{\sigma^n}\Big( \|\nabla \psi\|_{L^p(\Omega_4)} + \int_{\Omega_4}|\F|^{p'} dx \Big)+ C \omega(\gamma)^{p'} \\
&+ C M^{\frac{p(p_0 -  p)}{p_0}} \omega(2 M)^{p'} 
\left[\frac{\sigma^{-2 n} }{(\gamma \e  )^p } \Big( \|\nabla \psi\|_{L^p(\Omega_4)} + \int_{\Omega_4}|\F|^{p'} dx \Big) +  \big(\frac{\sigma^{\beta }}{\gamma }\big)^p \right]^{\frac{p_0 -  p}{p_0}}
\end{align*}
for every $\gamma>0$. From this, we get the desired conclusion by  choosing $\gamma$ small first,
then $\sigma$, and  $\delta$ last.
\end{proof}

Our second step is to show that the gradient of the solution $f$ to \eqref{eq-f} 
can be approximated by the gradient of a solution to a homogeneous equation with constant coefficient. Precisely, we have:
\begin{lemma}\label{lm:step2} 
Let $\e\in (0,1]$, and let  $\sigma$ be its corresponding constant given by  Lemma~\ref{lm:step1}.  
Let  $\Omega$, $\psi$, $\F$,  $u$, and $h$  be  as in Lemma~\ref{lm:step1}, and  
assume in addition that $\frac{1}{|B_{4 \sigma}(y)|} \int_{\Omega_{4\sigma}(y)} |\nabla u|^p\, dx\leq 1$. 
Suppose that  $f$ is a weak solution of \eqref{eq-f}  and 
$w$ is a weak solution of 
\begin{equation} \label{eq-w}
 \left\{
 \begin{alignedat}{2}
  &\div \Big[\frac{\langle\A\rangle_{\Omega_{3\sigma}(y)}(\lambda\theta \bar u_{\Omega_{4 \sigma}(y)}, \lambda \nabla w)}{\lambda^{p-1}}\Big]=0\quad \mbox{in}\quad \Omega_{3\sigma}(y),\\\
&w=f \qquad\qquad \qquad\qquad\qquad\qquad\,\, \, \mbox{on}\quad \partial \Omega_{3\sigma}(y).
 \end{alignedat} 
  \right.
\end{equation}
There exist constants $p_0\in (p,\infty)$ and $C>0$ depending only on $p$, $n$, and $\Lambda$ such that: 
 if $p\geq 2$, then
\begin{equation}\label{est-oscillation-n}
\frac{1}{|B_{3 \sigma}(y) | }\int_{\Omega_{3 \sigma}(y)}{|\nabla  f- \nabla  w|^p\, dx}\leq C \,\,  \Theta_{\Omega_{3 \sigma}(y)}(\A)^{\frac{p_0 - p}{p_0}},
\end{equation}
and if  $1<p<2$, then
\begin{align*}
\frac{1}{|B_{3 \sigma}(y) | } \int_{\Omega_{3\sigma}(y)} |\nabla f-\nabla w|^p\, dx 
&\leq 
  \tau 
+ C   \tau^{(1-\frac2p)p'}\Theta_{\Omega_{3\sigma}(y)}(\A)^{\frac{p_0 - p}{p_0}}\quad \mbox{for every}\quad \tau\in (0,1).
\end{align*}
\end{lemma}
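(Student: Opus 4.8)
The argument follows the freezing‑and‑comparison scheme in the spirit of \cite{CC,CP} (with the sub‑quadratic adjustments as in \cite{BPS,DiN}), carried out for the \emph{scaled} vector field so that all constants stay independent of $\lambda,\theta,\sigma$. Set $m:=\bar u_{\Omega_{4 \sigma}(y)}$ and introduce the scaled frozen fields
\[
\widetilde\A(x,\xi):=\frac{\A(x,\lambda\theta m,\lambda\xi)}{\lambda^{p-1}},\qquad
\bar\A(\xi):=\frac{\langle\A\rangle_{\Omega_{3\sigma}(y)}(\lambda\theta m,\lambda\xi)}{\lambda^{p-1}}=\frac{1}{|B_{3\sigma}(y)|}\int_{\Omega_{3\sigma}(y)}\widetilde\A(x,\xi)\,dx,
\]
so that $f$ solves $\div\widetilde\A(x,\nabla f)=0$ in $\Omega_{4\sigma}(y)$ and $w$ solves $\div\bar\A(\nabla w)=0$ in $\Omega_{3\sigma}(y)$ with $w=f$ on $\partial\Omega_{3\sigma}(y)$. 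Since $\|u\|_{L^\infty(\Omega_4)}\leq M/\lambda\theta$ and $\overline\K$ is convex, $\lambda\theta m\in\overline\K\cap[-M,M]$; moreover the structural conditions \eqref{structural-reference-1}--\eqref{structural-reference-2} (hence \eqref{structural-consequence} and \eqref{simple-structural}) are invariant under the rescaling $\A(x,z,\xi)\mapsto\lambda^{1-p}\A(x,z,\lambda\xi)$ and are preserved under averaging in $x$, so $\widetilde\A$ and $\bar\A$ satisfy them with constants depending only on $p,n,\Lambda$ — for $\bar\A$ the volume fraction $|\Omega_{3\sigma}(y)|/|B_{3\sigma}(y)|$ enters the coercivity constant, but the flatness of $\partial\Omega$ keeps it bounded below. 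Finally put
\[
\Phi(x):=\sup_{\xi\neq0}\frac{|\widetilde\A(x,\xi)-\bar\A(\xi)|}{|\xi|^{p-1}}=\sup_{\xi\neq0}\frac{|\A(x,\lambda\theta m,\xi)-\langle\A\rangle_{\Omega_{3\sigma}(y)}(\lambda\theta m,\xi)|}{|\xi|^{p-1}};
\]
then $0\leq\Phi\leq2\Lambda$ a.e. and $\frac{1}{|B_{3\sigma}(y)|}\int_{\Omega_{3\sigma}(y)}\Phi\,dx\leq\Theta_{\Omega_{3\sigma}(y)}(\A)$ by \eqref{def:Theta}.

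Next I would collect the a priori bounds. By Lemma~\ref{lm:step1} and the extra hypothesis $\frac{1}{|B_{4\sigma}(y)|}\int_{\Omega_{4\sigma}(y)}|\nabla u|^p\,dx\leq1$ we get $\frac{1}{|B_{4\sigma}(y)|}\int_{\Omega_{4\sigma}(y)}|\nabla f|^p\,dx\leq C$, and Proposition~\ref{prop:energy} applied to the $w$‑equation then gives $\frac{1}{|B_{3\sigma}(y)|}\int_{\Omega_{3\sigma}(y)}|\nabla w|^p\,dx\leq C\,\frac{1}{|B_{3\sigma}(y)|}\int_{\Omega_{3\sigma}(y)}|\nabla f|^p\,dx\leq C$. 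Since $h=u-\psi=0$ on $B_4\cap\partial\Omega$ we have $f=h=0$ on the portion of $\partial\Omega_{4\sigma}(y)$ lying on $\partial\Omega$, hence also $w=f=0$ on the portion of $\partial\Omega_{3\sigma}(y)$ lying on $\partial\Omega$; combined with the uniform density condition \eqref{uniform-density-cond} this lets us apply the boundary higher integrability of Theorem~\ref{thm:inter-regularity}(i) to both $f$ and $w$, producing a single $p_0\in(p,\infty)$ and a constant $C$, depending only on $p,n,\Lambda$, with
\[
\Big(\frac{1}{|B_{3\sigma}(y)|}\int_{\Omega_{3\sigma}(y)}\big(|\nabla f|^{p_0}+|\nabla w|^{p_0}\big)\,dx\Big)^{\frac1{p_0}}\leq C .
\]

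For the comparison, test the equations for $f$ and $w$ with $f-w\in W^{1,p}_0(\Omega_{3\sigma}(y))$ and subtract; adding and subtracting $\bar\A(\nabla f)$,
\begin{align*}
\int_{\Omega_{3\sigma}(y)}\big\langle\bar\A(\nabla f)-\bar\A(\nabla w),\,\nabla f-\nabla w\big\rangle\,dx
&=\int_{\Omega_{3\sigma}(y)}\big\langle\bar\A(\nabla f)-\widetilde\A(x,\nabla f),\,\nabla f-\nabla w\big\rangle\,dx\\
&\leq\int_{\Omega_{3\sigma}(y)}\Phi\,|\nabla f|^{p-1}\,|\nabla f-\nabla w|\,dx.
\end{align*}
When $p\geq2$, the left side is $\geq4^{1-p}\Lambda^{-1}\int_{\Omega_{3\sigma}(y)}|\nabla f-\nabla w|^p\,dx$ by \eqref{structural-consequence}; Young's inequality (and $\Phi^{p'}\leq(2\Lambda)^{p'-1}\Phi$) yields $\frac{1}{|B_{3\sigma}(y)|}\int_{\Omega_{3\sigma}(y)}|\nabla f-\nabla w|^p\,dx\leq C\,\frac{1}{|B_{3\sigma}(y)|}\int_{\Omega_{3\sigma}(y)}\Phi\,|\nabla f|^p\,dx$, and H\"older with exponents $\frac{p_0}{p},\frac{p_0}{p_0-p}$ together with $\Phi^{\frac{p_0}{p_0-p}}\leq C\Phi$, the bound on $\Phi$, and the higher integrability give
\[
\frac{1}{|B_{3\sigma}(y)|}\int_{\Omega_{3\sigma}(y)}|\nabla f-\nabla w|^p\,dx\leq C\Big(\frac{1}{|B_{3\sigma}(y)|}\int_{\Omega_{3\sigma}(y)}\Phi\,dx\Big)^{\frac{p_0-p}{p_0}}\leq C\,\Theta_{\Omega_{3\sigma}(y)}(\A)^{\frac{p_0-p}{p_0}},
\]
which is \eqref{est-oscillation-n}.

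The subquadratic case $1<p<2$ is where the real work lies and is the main obstacle. Now \eqref{structural-consequence} only controls the degenerate quantity $\mathcal I:=\int_{\Omega_{3\sigma}(y)}(|\nabla f|+|\nabla w|)^{p-2}|\nabla f-\nabla w|^2\,dx$, so the displayed inequality gives $\mathcal I\leq C\int_{\Omega_{3\sigma}(y)}\Phi\,|\nabla f|^{p-1}|\nabla f-\nabla w|\,dx$. To pass back to the $L^p$ norm I would use the elementary identity
\[
|\nabla f-\nabla w|^p=\big[(|\nabla f|+|\nabla w|)^{p-2}|\nabla f-\nabla w|^2\big]^{\frac p2}\,(|\nabla f|+|\nabla w|)^{\frac{(2-p)p}{2}}
\]
together with H\"older at exponents $\frac2p,\frac{2}{2-p}$ and the $L^p$ bounds on $\nabla f,\nabla w$, obtaining $\frac{1}{|B_{3\sigma}(y)|}\int_{\Omega_{3\sigma}(y)}|\nabla f-\nabla w|^p\,dx\leq C\big(\tfrac{1}{|B_{3\sigma}(y)|}\mathcal I\big)^{p/2}$. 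It then remains to estimate $\mathcal I$: inside $\int\Phi|\nabla f|^{p-1}|\nabla f-\nabla w|$ one writes $|\nabla f-\nabla w|=\big[(|\nabla f|+|\nabla w|)^{\frac{p-2}{2}}|\nabla f-\nabla w|\big]\,(|\nabla f|+|\nabla w|)^{\frac{2-p}{2}}$ and applies Young's inequality \emph{with a free parameter} to absorb a small multiple of $\mathcal I$, leaving a term $\lesssim C(\tau)\int_{\Omega_{3\sigma}(y)}\Phi^2|\nabla f|^{2(p-1)}(|\nabla f|+|\nabla w|)^{2-p}\,dx\leq C(\tau)\int_{\Omega_{3\sigma}(y)}\Phi\,(|\nabla f|+|\nabla w|)^p\,dx$, which — by H\"older and the higher integrability of $\nabla f,\nabla w$, exactly as in the case $p\geq2$ — is at most $C(\tau)\,\Theta_{\Omega_{3\sigma}(y)}(\A)^{\frac{p_0-p}{p_0}}$. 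Re‑parametrizing so that the absorbed remainder becomes the free term $\tau$ yields the stated inequality, the power $(1-\tfrac2p)p'$ on $\tau$ being exactly the exponent generated by this balancing; tracking the chain of Young and H\"older constants shows they depend only on $p,n,\Lambda$, and the scale invariance of \eqref{structural-reference-1}--\eqref{structural-reference-2} together with \eqref{uniform-density-cond} is precisely what prevents the auxiliary parameter, the higher‑integrability exponent $p_0$, and the energy constants from picking up any dependence on $\lambda,\theta,\sigma$.
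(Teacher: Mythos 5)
Your treatment of the case $p\geq 2$ is correct and follows the same route as the paper: test with $f-w$, pass the error onto the oscillation density $\Phi$, use higher integrability of $\nabla f$ (via Theorem~\ref{thm:inter-regularity}(i) when $y=0$, interior reverse H\"older when $B_{4\sigma}(y)\subset\Omega_2$), and close with H\"older at $\frac{p_0}{p},\frac{p_0}{p_0-p}$ and the bound $\Phi\leq 2\Lambda$.

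For $1<p<2$ there are two genuine gaps. First, you invoke Theorem~\ref{thm:inter-regularity}(i) for $\nabla w$ as well as $\nabla f$, but that theorem is stated for solutions vanishing on $B_{r_0}(x_0)\cap\partial\Omega$; the function $w$ solves the Dirichlet problem on $\Omega_{3\sigma}(y)$ with the \emph{nonzero} boundary data $f$ on the spherical portion $\partial B_{3\sigma}(y)\cap\Omega$, so boundary higher integrability for $\nabla w$ up to all of $\partial\Omega_{3\sigma}(y)$ does not follow from the cited result. The paper's argument carefully avoids ever needing $\nabla w\in L^{p_0}$: it controls only $\int\Phi^{p'}|\nabla f|^p$ and uses the reverse H\"older bound for $\nabla f$ alone. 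Second, the "re-parametrizing" step that is supposed to produce $\tau + C\tau^{(1-\frac2p)p'}\Theta^{\frac{p_0-p}{p_0}}$ does not work as sketched. If you absorb $\epsilon\mathcal I$ into the left side, $\epsilon$ must be a fixed small constant depending only on $\Lambda$, so no free parameter $\tau$ survives; what your chain of Young and H\"older actually yields is a bound of the form $C\Theta^{\frac{(p_0-p)p}{2p_0}}$ with no $\tau$, which is a different statement (and with a different exponent on $\Theta$) from what the lemma asserts. The paper instead quotes \cite[Lemma~3.1]{NP}, which is precisely the pointwise Young inequality $|\nabla f-\nabla w|^p\leq \tau(|\nabla f|+|\nabla w|)^p + C\tau^{1-\frac2p}(|\nabla f|+|\nabla w|)^{p-2}|\nabla f-\nabla w|^2$ integrated and combined with the energy estimate; the extra factor $p'$ in the exponent on $\tau$ then comes from a subsequent parametric Young applied to $I=\int\Phi|\nabla f|^{p-1}|\nabla f-\nabla w|$ in order to reabsorb $\int|\nabla f-\nabla w|^p$. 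If you want to reconstruct this without citing \cite{NP}, you should apply Young with the free parameter at the stage of passing from $\mathcal I$ to $\int|\nabla f-\nabla w|^p$ (not in the estimate of $\mathcal I$), and arrange the second H\"older so that only $\nabla f$, not $\nabla w$, appears raised to the higher exponent $p_0$.
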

\begin{proof}  For convenience, define 
 $\ba(x, \xi) := \frac{\A(x,\lambda \theta m, \lambda \xi)}{\lambda^{p-1}}$ with $m:=\bar u_{\Omega_{4 \sigma}(y)}$.
 We first consider the case $p\geq 2$. Then    using \eqref{structural-consequence} we get 
\begin{equation}\label{p-geq-2-n}
\int_{\Omega_{3\sigma}(y)} |\nabla ( f- w)|^p\, dx 
\leq \Lambda \int_{\Omega_{3\sigma}(y)}\langle \langle\ba\rangle_{\Omega_{3\sigma}(y)}(\nabla f)- \langle\ba\rangle_{\Omega_{3\sigma}(y)}(\nabla w), \nabla (f- w) \rangle \, dx :=I.
\end{equation}
To estimate the term $I$, we use  $f-w$ as a test function in equations  \eqref{eq-w}  and \eqref{eq-f}  to obtain
\begin{equation*}
\int_{\Omega_{3\sigma}(y)}\langle \langle\ba\rangle_{\Omega_{3\sigma}(y)}(\nabla w), \nabla(f-w) \rangle \, dx= \int_{\Omega_{3\sigma}(y)}\langle \ba(x,\nabla f), \nabla(f-w) \rangle \, dx.
\end{equation*}
Hence,
\begin{align}\label{est-I-n}
I&= \Lambda \int_{\Omega_{3\sigma}(y)}\langle \langle\ba\rangle_{\Omega_{3\sigma}(y)}(\nabla f)-  \ba(x,\nabla f), \nabla(f-w) 
\rangle \, dx\nonumber\\
&\leq  \Lambda \int_{\Omega_{3\sigma}(y)} \sup_{\xi\neq 0} \frac{|\ba(x,\xi) -
 \langle\ba\rangle_{\Omega_{3\sigma}(y)}(\xi)| }{|\xi|^{p-1}}\,\, |\nabla f|^{p-1} |\nabla(f-w)| \, dx.
\end{align}
We now claim that there exist constants $p_0\in (p,\infty)$ and $C>0$ depending only on $p$, $n$, and $\Lambda$ such that
\begin{equation}\label{control-higher-Df}
\left(\frac{1}{|B_{3 \sigma}(y) | } \int_{\Omega_{3\sigma}(y)}  |\nabla f|^{p_0} dx\right)^{\frac{1}{p_0}}
\leq 
C\, \left(\frac{1}{|B_{4 \sigma} (y)| } \int_{\Omega_{4\sigma}(y)}  |\nabla f|^{p}  dx\right)^{\frac{1}{p}}.
\end{equation}
Indeed, this follows from  the classical interior higher integrability if $B_{ 4\sigma}(y)\subset \Omega_2$. In the case $y=0$, we  obtain \eqref{control-higher-Df}  from the boundary   higher integrability in  Theorem~\ref{thm:inter-regularity} by  using the fact 
$f=h=0$ on $B_{4\sigma} \cap \partial \Omega$ and  the assumption that $\Omega$ satisfies \eqref{uniform-density-cond}.    Thanks to Lemma~\ref{lm:step1}, we also have 
\begin{equation}\label{est-nabla-f-n}
\frac{1}{|B_{4 \sigma}(y) | } \int_{\Omega_{4\sigma}(y)}  |\nabla f|^{p}  dx 
\leq \frac{2^{p-1}}{|B_{4 \sigma}(y) | }  \left[ \int_{\Omega_{4\sigma}(y)} |\nabla f-\nabla u|^p\, dx  +
 \int_{\Omega_{4\sigma}(y)} |\nabla u|^p\, dx \right]\leq 2^{p-1}[\e^p +1 ]\leq 2^p.
\end{equation}
Therefore, we infer that 
\begin{equation*}
\left(\frac{1}{|B_{3 \sigma}(y) | } \int_{\Omega_{3\sigma}(y)}  |\nabla f|^{p_0} dx\right)^{\frac{1}{p_0}}
\leq 
C.
\end{equation*}
This together with   \eqref{p-geq-2-n}--\eqref{est-I-n},  Young and H\"older inequalities,  and the fact $|\ba(x,\xi)|\leq \Lambda |\xi|^{p-1}$ gives
\begin{align*}
&\frac{1}{|B_{3 \sigma}(y) | }\int_{\Omega_{3\sigma}(y)} |\nabla (f-w)|^p\, dx 
\leq C\frac{1}{|B_{3 \sigma}(y) | }  \int_{\Omega_{3\sigma}(y)}\Big[ \sup_{\xi\neq 0} \frac{|\ba(x,\xi) -
\langle\ba\rangle_{\Omega_{3\sigma}(y)}(\xi)| }{|\xi|^{p-1}}\Big]^{p'}\, |\nabla f|^p  dx\\
&\leq C
\left(\frac{1}{|B_{3 \sigma} (y) | } \int_{\Omega_{3\sigma}(y)}  \Big[ \sup_{\xi\neq 0} \frac{|\ba(x,\xi) -\langle\ba\rangle_{\Omega_{3\sigma}(y)}(\xi)| }{|\xi|^{p-1}}\Big]^{\frac{p' p_0}{p_0 - p}}  dx\right)^{\frac{p_0 - p}{p_0}} \left(\frac{1}{|B_{3 \sigma}(y) | } \int_{\Omega_{3\sigma}(y)}  |\nabla f|^{p_0} dx\right)^{\frac{p}{p_0}}\\
&\leq 
C \left(\frac{1}{|B_{3 \sigma} (y)| } \int_{\Omega_{3\sigma}(y)}  \sup_{\xi\neq 0} \frac{|\ba(x,\xi) -\langle\ba\rangle_{\Omega_{3\sigma}(y)}(\xi)| }{|\xi|^{p-1}} dx\right)^{\frac{p_0 - p}{p_0}}.
\end{align*}
But  we have from the definition of $\ba$ that
\begin{align*}
\sup_{\xi\neq 0} \frac{|\ba(x,\xi) -\langle\ba\rangle_{\Omega_{3\sigma}(y)}(\xi)| }{|\xi|^{p-1}} 
&= \sup_{\eta\neq 0} \frac{|\A(x,\lambda\theta m, \eta) 
-\langle\A\rangle_{\Omega_{3\sigma}(y)}(\lambda\theta m,
\eta)| }{|\eta|^{p-1}}.
 \end{align*}
Thus we conclude that
\begin{align*}
\frac{1}{|B_{3 \sigma}(y) | } \int_{\Omega_{3\sigma}(y)} |\nabla (f-w)|^p\, dx 
\leq 
C \left(\frac{1}{|B_{3 \sigma} (y)| } \int_{\Omega_{3\sigma}(y)}  \sup_{\xi\neq 0} \frac{|\A(x,\lambda 
\theta m, \xi) -\langle\A\rangle_{\Omega_{3\sigma}(y)}(\lambda 
\theta m, \xi)| }{|\xi|^{p-1}} dx\right)^{\frac{p_0 - p}{p_0}}.
\end{align*}
This together with 
the fact $\lambda\theta m\in \overline{\K} \cap [-M, M]$  and the definition of $\Theta_{\Omega_{3\sigma}}(\A)$ given by 
\eqref{def:Theta}
yields estimate \eqref{est-oscillation-n}.
We next consider 
the case $1<p<2$. Then   condition (1.2) in \cite{NP} is satisfied thanks to \eqref{structural-consequence}.
Therefore, instead of \eqref{p-geq-2-n}  we now 
 have from  \cite[Lemma~3.1]{NP} that 
\begin{equation*}
\int_{\Omega_{3\sigma}(y)} |\nabla (f-w)|^p\, dx 
\leq  \tau \int_{\Omega_{3\sigma}(y)} |\nabla f|^p\, dx + 
C_p  \tau^{1-\frac2p} I \quad\mbox{for all}\quad \tau\in (0,\frac12). 
\end{equation*}
Then we deduce from  estimate \eqref{est-I-n} for $I$ and Young's inequality that 
\begin{align*}
\frac{1}{|B_{3 \sigma}(y) | } \int_{\Omega_{3\sigma}(y)} |\nabla (f-w)|^p\, dx 
&\leq 
 2 \tau \frac{1}{|B_{3 \sigma}(y) | } \int_{\Omega_{3\sigma}(y)} |\nabla f|^p\, dx\\
&+ C(p,\Lambda)   \tau^{(1-\frac2p)p'}\frac{1}{|B_{3 \sigma}(y) | } \int_{\Omega_{3\sigma}(y)}\Big[ 
\sup_{\xi\neq 0} \frac{|\ba(x,\xi) -\langle\ba\rangle_{\Omega_{3\sigma}(y)}(\xi)| }{|\xi|^{p-1}}\Big]^{p'}\, |\nabla f|^p  dx.
\end{align*}
The first integral is estimated by \eqref{est-nabla-f-n}
 and  the last integral can be estimated exactly as above. As a consequence, we obtain 
\begin{align*}
\frac{1}{|B_{3 \sigma}(y) | } \int_{\Omega_{3\sigma}(y)} |\nabla (f-w)|^p\, dx 
&\leq 
 2^{p+1} \big(\frac43\big)^n \,  \tau 
+ C(p,n,\Lambda)   \tau^{(1-\frac2p)p'}\Theta_{\Omega_{3\sigma}(y)}(\A)^{\frac{p_0 - p}{p_0}}\quad\mbox{for all}\quad \tau\in (0,\frac12). 
\end{align*}
\end{proof}

To obtain Lemma~\ref{lm:local-boundary}, our last step is to show that the gradient of the solution $w$ to \eqref{eq-w} 
can be approximated by a bounded gradient. Precisely, we have:
\begin{lemma}\label{lm:step3}
Let $\ba:\R^n \to \R^n$ be a vector field as in Theorem~\ref{thm:bnd-lipschitz}.  Let  $\e>0$, and let  $\sigma$ be its corresponding constant given by 
Lemma~\ref{lm:step1}.  Then there exists a constant $\delta>0$ depending only on $\e$, $p$, $\Lambda$,  and  $n$ satisfying: 
if $\Omega\subset\R^n$ is  a bounded domain such that 
\eqref{boundary-cond-at-zero}
holds,  $y\in B_1$ satisfies either $y=0$ or $B_{4\sigma}(y) \subset \Omega$, and  
$w\in W^{1,p}(\Omega_{3\sigma}(y))$ is a weak solution of 
\begin{equation*} 
 \left\{
 \begin{alignedat}{2}
  &\div \, \ba(\nabla w) =0\quad \mbox{in}\quad \Omega_{3\sigma}(y),\\\
&w=0\qquad\qquad\,\, \mbox{on}\quad B_{3\sigma}(y) \cap  \partial \Omega
 \end{alignedat} 
  \right.
\end{equation*}
with  $ \frac{1}{|B_{3\sigma}(y)|}\int_{\Omega_{3\sigma}(y)} |\nabla w|^p\, dx\leq 1$, then there exists a function 
$v\in  W^{1,p}(\Omega_{2 \sigma }(y))$ such that
\begin{align*}\label{last-appro}
\|\nabla  v\|_{L^\infty(\Omega_{2 \sigma}(y))}
\leq C(p,n, \Lambda)
\quad\mbox{and}\quad 
\frac{1}{|B_{2\sigma}(y)|} \int_{\Omega_{2\sigma }(y)}{|\nabla  w - \nabla  v|^p\, dx}\leq \e^p.
\end{align*}
  \end{lemma}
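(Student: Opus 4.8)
The plan is to separate the analysis into two regimes depending on whether $w$ vanishes on a genuine piece of $\partial\Omega$ near $y$ (the boundary case, essentially $y=0$) or whether $B_{4\sigma}(y)\subset\Omega$ so that the equation is interior on $B_{3\sigma}(y)$. In the interior case, no compactness is needed: $w$ solves $\div\,\ba(\nabla w)=0$ in the ball $B_{3\sigma}(y)$ with $\fint_{B_{3\sigma}(y)}|\nabla w|^p\leq 1$, and the classical interior $C^{1,\alpha}$ theory of DiBenedetto and Tolksdorf gives $\|\nabla w\|_{L^\infty(B_{2\sigma}(y))}\le C(p,n,\Lambda)$ directly; we simply take $v:=w$ and the comparison estimate is trivially zero. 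So the substance is the boundary case, and here I would argue by contradiction and compactness in the spirit of the proof of Theorem~\ref{thm:bnd-lipschitz} (Lieberman's Lemma~5) and Lemma~\ref{lm:compactness} referenced in the excerpt.

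First I would fix $\e>0$ and suppose, for contradiction, that for every $k\in\N$ there are a domain $\Omega^k$ satisfying \eqref{boundary-cond-at-zero} with $\delta=1/k$, a point $y^k=0$ (the boundary sub-case), a vector field $\ba^k$ obeying \eqref{structural-reference-1}--\eqref{structural-reference-2} with the fixed constants $\Lambda,p$, and a weak solution $w^k\in W^{1,p}(\Omega^k_{3\sigma})$ of $\div\,\ba^k(\nabla w^k)=0$ in $\Omega^k_{3\sigma}$, $w^k=0$ on $B_{3\sigma}\cap\partial\Omega^k$, with $\fint_{\Omega^k_{3\sigma}}|\nabla w^k|^p\le 1$, for which no admissible $v$ exists; concretely, every $v\in W^{1,p}(\Omega^k_{2\sigma})$ with $\|\nabla v\|_{L^\infty(\Omega^k_{2\sigma})}\le C_0(p,n,\Lambda)$ satisfies $\fint_{\Omega^k_{2\sigma}}|\nabla w^k-\nabla v|^p\,dx>\e^p$, where $C_0$ is the constant from Theorem~\ref{thm:bnd-lipschitz}. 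Since $\delta=1/k\to 0$, the domains $\Omega^k_{3\sigma}$ converge (as sets, in Hausdorff distance and in measure) to the half-domain $B_{3\sigma}\cap\{x_n>0\}$. The uniform energy bound, together with a Sobolev--Poincaré inequality using the fact that $w^k$ vanishes on a large portion of $B_{3\sigma}$ near the flat part of $\partial\Omega^k$, bounds $w^k$ in $W^{1,p}$; I can extend each $w^k$ suitably (by zero below the flattened boundary, or via a uniform extension using the Reifenberg/$(A)$-property) to get a sequence bounded in $W^{1,p}$ of a fixed neighborhood, hence a subsequence converging weakly in $W^{1,p}$ and strongly in $L^p$ to some $w_\infty$. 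Passing to a further subsequence, $\ba^k\to\ba_\infty$ locally uniformly on $\R^n\setminus\{0\}$ by Arzelà--Ascoli (the bounds \eqref{structural-reference-1}--\eqref{structural-reference-2} give uniform local Lipschitz control of $\ba^k$ away from the origin, after normalizing $\ba^k(0)=0$), with $\ba_\infty$ still satisfying \eqref{structural-reference-1}--\eqref{structural-reference-2}.

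Next I would identify the limit: $w_\infty\in W^{1,p}(B^+_{3\sigma})$ is a weak solution of $\div\,\ba_\infty(\nabla w_\infty)=0$ in $B^+_{3\sigma}$ with $w_\infty=0$ on $B_{3\sigma}\cap\{x_n=0\}$, and $\fint_{B^+_{3\sigma}}|\nabla w_\infty|^p\le 1$. The passage to the limit in the nonlinear term is the one delicate point: it follows from the monotonicity \eqref{structural-consequence} combined with the strong $L^p$ convergence of $w^k$ and a Minty-type argument (testing the equations for $w^k$ against $w^k-\phi$, using lower semicontinuity and the uniform convergence $\ba^k\to\ba_\infty$), together with care that the domains $\Omega^k_{3\sigma}$ vary — this is exactly the kind of argument carried out in the proof of Lemma~\ref{lm:compactness}, so I would invoke it. By the Boundary Lipschitz estimate, Theorem~\ref{thm:bnd-lipschitz}, applied to $w_\infty$ (rescaled so the half-ball has radius $3\sigma$), we get $\sup_{B^+_{\sigma}}|\nabla w_\infty|^p\le C(p,n,\Lambda)\,\fint_{B^+_{3\sigma}}|\nabla w_\infty|^p\le C(p,n,\Lambda)$; in particular $\nabla w_\infty\in L^\infty(B^+_{2\sigma})$ (after a standard covering/interior-plus-boundary patching to pass from radius $\sigma$ to $2\sigma$, or simply by choosing $\sigma$ so that $2\sigma$ plays the role of the inner radius) with norm bounded by some $C_1(p,n,\Lambda)$; enlarging $C_0$ if necessary we may take $C_0\ge C_1$.

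Finally I would derive the contradiction. Strong $L^p$ convergence of $\nabla w^k$ on $\Omega^k_{2\sigma}$ to $\nabla w_\infty$ on $B^+_{2\sigma}$ — which is upgraded from weak convergence using again the monotonicity inequality \eqref{structural-consequence} and the equations, exactly as in Caccioppoli-type compactness arguments for $p$-Laplacian-type operators — shows that for all large $k$, $\fint_{\Omega^k_{2\sigma}}|\nabla w^k-\nabla v_k|^p\,dx\le\e^p$, where $v_k$ is the restriction to $\Omega^k_{2\sigma}$ of a fixed Lipschitz extension of $w_\infty|_{B^+_{2\sigma}}$ to a neighborhood, which has gradient bounded by $C_0$. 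This contradicts the standing assumption on $w^k$, and the lemma follows. The main obstacle, as flagged, is the limit passage in the nonlinear flux under simultaneously varying domains $\Omega^k$ and varying operators $\ba^k$: one must upgrade weak to strong $W^{1,p}$ convergence and justify that $w_\infty$ solves the limiting half-space problem, which is precisely where \eqref{structural-consequence} (the strict monotonicity from \eqref{structural-reference-1}) and the uniform flat-boundary approximation from \eqref{boundary-cond-at-zero} are used; everything after that is a direct application of Theorem~\ref{thm:bnd-lipschitz}.
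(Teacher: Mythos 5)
Your overall architecture (split interior/boundary, compactness to pass to a half‑space limit, then invoke the boundary Lipschitz estimate of Theorem~\ref{thm:bnd-lipschitz}) matches the paper's strategy, but there is a genuine gap in the step you flag as ``the one delicate point'': upgrading weak $W^{1,p}$ convergence of $\nabla w^k$ to strong $L^p$ convergence of the gradients. You propose to do this ``exactly as in Caccioppoli‑type compactness arguments,'' i.e.\ by testing the two equations against a cutoff times the difference. But $(w^k-w_\infty)\phi$ is not an admissible test function for \emph{either} equation: $w_\infty$ (extended by zero below $\{x_n=0\}$) does not vanish on $\partial\Omega^k\cap B_3\subset\{|x_n|<3/k\}$, and $w^k$ does not vanish on $\{x_n=0\}$. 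Without an admissible test function you cannot even start the monotonicity/Caccioppoli estimate, and there is no a priori reason mass of $|\nabla w^k|^p$ cannot concentrate in the thin strip $\{-3/k<x_n<3/k\}$; weak convergence plus the energy bound does not preclude this, so your ``strong convergence of gradients'' claim is exactly what needs to be proved and is not supplied.

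The paper circumvents this by deliberately \emph{not} proving strong $W^{1,p}$ convergence in the compactness step. Lemma~\ref{lm:compactness} only delivers $L^p$ closeness of the \emph{functions} $w$ and a half‑space solution $\tilde v$. Then, crucially, $\tilde v$ is extended by zero to $B_3$, is observed to solve $\div\,\ba(\nabla\tilde v)=-\tilde g_{x_n}$ (a jump term concentrated on $\{x_n=0\}$), and is \emph{translated upward}: $v(x):=\tilde v(x-3\delta e_n)$. This translation is the missing device — it forces $v=0$ on $B_3\cap\partial\Omega$, so that $\phi^p(w-v)$ really is an admissible test function for both the equation for $w$ and the translated jump equation for $v$. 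Only then does the Caccioppoli argument close: the right-hand side is controlled by $\|w-v\|_{L^p}$ (small by the compactness lemma plus the smallness of $\delta$) and by the jump term supported on the set $\Omega_{5/2}\setminus D_\delta$ of measure $O(\delta)$. So your proposal needs the translation (or some equivalent device to align the zero sets) to be made rigorous; as written, the upgrade to strong gradient convergence does not go through.
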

\begin{proof} 
If $B_{4\sigma}(y)\subset \Omega$, then the conclusion follows by simply taking
$v=w$ and using the interior Lipschitz estimate for $w$. Notice that condition \eqref{boundary-cond-at-zero} is not used for this case.
Thus it suffices to consider the case $y=0$. If we let  $\tilde\Omega := \{\sigma^{-1} x : \, x\in\Omega\}$ and 
$ \tilde w(x) :=\sigma^{-1} w(\sigma x )$, then
$\tilde\Omega$ satisfies 
 \[B_{3}\cap \{x_n> 3 \delta\} \subset \tilde\Omega_{3} \subset B_{3}\cap \{x_n> -3\delta\}
  \]
  and $\tilde w$ is a weak solution of  $\div \ba(\nabla \tilde w) = 0$ in 
$\tilde \Omega_3$ with   $\tilde w=0$ on $B_3 \cap \partial\tilde \Omega$. Moreover,
$
  \frac{1}{|B_{3}|}
  \int_{\tilde\Omega_{3}}{|\nabla \tilde w|^p \, dx}= \frac{1}{|B_{3\sigma}|}
  \int_{\Omega_{3\sigma}}{|\nabla  w|^p \, dz}\leq 1$.
Therefore, by working with $\tilde\Omega$ and $\tilde w$ we can assume in addition that $\sigma=1$,
and only need to show that there exists a function 
$v\in  W^{1,p}(\Omega_{2})$ such that
\begin{align*}
\|\nabla  v\|_{L^\infty(\Omega_{2})}
\leq C(p,n, \Lambda)
\quad\mbox{and}\quad 
 \int_{\Omega_{2}}{|\nabla  w - \nabla  v|^p\, dx}\leq \e^p.
\end{align*}
 For this, we first apply  Lemma~\ref{lm:compactness} to conclude that there exists  a weak solution $\tilde v$ to the equation 
\begin{equation*} 
 \left\{
 \begin{alignedat}{2}
  &\div \,\ba(\nabla \tilde v)=0\quad \mbox{in}\quad B_{3}^+,\\\
&\tilde v=0 \qquad\qquad \mbox{ on}\quad B_{3} \cap \{x_n=0\}
 \end{alignedat} 
  \right.
\end{equation*}
with $ \frac{1}{|B_3|}\int_{B_3^+} |\nabla \tilde v|^p\, dx \leq (4^p\Lambda^2 )^p$  such that 
\begin{equation}\label{w-close-v-1}
\Big(\int_{B_{3} \cap \{x_n> 3\delta\} } |w-\tilde v|^p\, dx \Big)^{\frac1p}\leq \e^p.
\end{equation}
Then by the Lipschitz estimate in Theorem~\ref{thm:bnd-lipschitz}  we also have 
\begin{equation}\label{v-lipschitz-proof-first}
\|\nabla \tilde v\|_{L^\infty(B_\frac52^+)}^p
\leq C \int_{B_3^+} |\nabla \tilde v|^p\, dx\leq C.
\end{equation}
Let us extend $\tilde v$ from $B_{3}^+$ to $B_3$ by the zero extension. Due to \eqref{v-lipschitz-proof-first}
the resulting function $\tilde v$ satisfies: 
$\tilde v\in W^{1,p}(B_3)$ and
\begin{equation}\label{v-lipschitz-proof}
\|\nabla \tilde v\|_{L^\infty(B_\frac52)}
\leq  C.
\end{equation}
Moreover, $\tilde v$  is a weak solution of $\div \, \ba(\nabla \tilde  v)=-\tilde g_{x_n}$ in $B_3$
with 
$\tilde g(x) := \chi_{\{x_n <0 \} }(x) \, \ba_{n}(\nabla \tilde v(x', 0))$ for $x=(x', x_n)\in B_3$.  
Notice that if we let $D_\delta:= B_{3} \cap \{x_n> 3\delta\}$, then 
\eqref{w-close-v-1}  gives
\[
 \int_{\Omega_{\frac52}} |w-\tilde v|^p\, dx
\leq \int_{\Omega_{\frac52} \setminus D_\delta} |w-\tilde v|^p\, dx +\int_{D_\delta } |w-\tilde  v|^p\, dx 
\leq 2^{p-1}\Big[\int_{\Omega_{\frac52} \setminus D_\delta} |w|^p\, dx + \int_{\Omega_{\frac52} \setminus D_\delta} |\tilde v|^p\, 
dx\Big]
+\e^{p^2}.
\]
If $p>n$, then we can bound $L^\infty$ norms of $w$ and $\tilde v$ since
$\|w\|_{W^{1,p}(\Omega_\frac52)} +\|\tilde v\|_{W^{1,p}(\Omega_\frac52)}\leq C$. As a consequence, we deduce 
that 
\begin{align}\label{w-close-v-2}
\int_{\Omega_{\frac52}} |w-\tilde v|^p\, dx
\leq C
\big| \Omega_{\frac52} \setminus D_\delta\big|^{\frac{p}{n}} 
+\e^{p^2}\leq  C\delta^{\frac{p}{n}} +\e^{p^2}.
\end{align}
In case $p<n$, we also have  \eqref{w-close-v-2} as the above estimate together with   H\"older and Poincar\'e inequalities implies that
\begin{align*}
\int_{\Omega_{\frac52}} |w-\tilde v|^p\, dx
&\leq 2^{p-1}\Bigg[\Big(\int_{\Omega_{\frac52} \setminus D_\delta} |w|^{\frac{np}{n-p}}\, dx\Big)^{\frac{n-p}{n}}
+ \Big(\int_{B_{\frac52}^+ \setminus D_\delta} |\tilde v|^{\frac{np}{n-p}}\, dx\Big)^{\frac{n-p}{n}}\Bigg] 
\big| \Omega_{\frac52} \setminus D_\delta\big|^{\frac{p}{n}} 
+\e^{p^2}\nonumber\\
&\leq C\Bigg[\Big(\int_{\Omega_{\frac52} \setminus D_\delta} |\nabla w|^p\, dx\Big)^{\frac{n-p}{n}}
+ \Big(\int_{B_{\frac52}^+ \setminus D_\delta} |\nabla \tilde v|^p\, dx\Big)^{\frac{n-p}{n}}\Bigg] 
\delta^{\frac{p}{n}} 
+\eta^p\leq C\delta^{\frac{p}{n}} +\e^{p^2}.
\end{align*}
Let us define $v(x) := \tilde v(x, x_n -3\delta) =\tilde v( x-3\delta e_n)$. Then from the equation for $\tilde v$ we infer that
 $ v$  is a weak solution of 
\begin{equation} \label{eq-v-jump}
 \left\{
 \begin{alignedat}{2}
  &\div \, \ba(\nabla  v)=- g_{x_n}\quad \mbox{in}\quad \Omega_3,\\\
& v=0 \qquad\qquad \quad\,\,\, \mbox{ on}\quad B_{3} \cap \partial\Omega,
 \end{alignedat} 
  \right.
\end{equation}
where 
\[
g(x) := \chi_{\{x_n <3\delta \} }(x) \, \ba_{n}(\nabla  v(x', 3\delta))
=\chi_{\{x_n <3\delta \} }(x) \, \ba_{n}(\nabla  \tilde v(x', 0))\quad \mbox{for}\quad x=(x', x_n)\in \Omega_3.  
\]
Let $D^{3\delta}_n \tilde v(x) :=\frac{|\tilde v(x) -\tilde v(x-3\delta e_n)|}{3\delta}$ 
denote the $n$th difference quotient.
From \eqref{w-close-v-2} and $L^p$ estimate for $\nabla \tilde v$, we also have
\begin{align}\label{w-close-v-3}
\|w- v\|_{L^p(\Omega_{\frac52})} 
&\leq \|w-\tilde v\|_{L^p(\Omega_{\frac52})}  +\Big(\int_{\Omega_\frac52} |\tilde v(x) -\tilde v(x-3\delta e_n)|^p\, dx 
\Big)^{\frac1p}
\leq \|w-\tilde v\|_{L^p(\Omega_{\frac52})}  +3\delta \|D^{3\delta}_n \tilde v\|_{L^p(\Omega_{\frac52})}\nonumber\\
&\leq \|w-\tilde v\|_{L^p(\Omega_{\frac52})}  +3\delta \|\nabla \tilde v\|_{L^p(B_{\frac{11}{4}})}
\leq C\big(\delta^{\frac{1}{n}} +\delta\big) +\e^p
\leq C\delta^{\frac{1}{n}}
+\e^p .
\end{align}
Take $\phi\in C_0^\infty(B_{\frac52})$  be the standard cutoff function satisfying $\phi = 1$ in $B_2$. 
Then by using $\phi^p (w-v)$ as a test function in the equations for $w$ and \eqref{eq-v-jump}, we  obtain
\begin{align}\label{w-v-1}
\int_{\Omega_3} \langle \ba(\nabla w), \nabla \big[\phi^p (w-v)\big]  \rangle \, dx
=\int_{\Omega_3} \langle \ba(\nabla v), \nabla \big[\phi^p (w-v)\big]  \rangle \, dx + 
\int_{\Omega_3} g \big[\phi^p (w-v)\big]_{x_n} \, dx.
\end{align}
We can now follow the proof of \cite[Lemma~3.7]{BR} 
to get 
\[
\int_{\Omega_{2} } |\nabla (w-v)|^p\, dx \leq \e^p.
\]
For clarity, let us include the argument for the case $p\geq 2$. Indeed, we can rewrite \eqref{w-v-1} as
\begin{align*}
\int_{\Omega_3} \langle \ba(\nabla w)- \ba(\nabla v), \nabla (w-v)  \rangle \phi^p\, dx
&= p\int_{\Omega_3} \langle \ba(\nabla v)-\ba(\nabla w), \nabla \phi  \rangle \phi^{p-1} (w-v) \, dx\\
& \quad + 
\int_{\Omega_3\setminus D_\delta} \ba_{n}(\nabla  v(x', 3\delta)) \big[(w-v)_{x_n}\phi^p + p \phi_{x_n} \phi^{p-1}
(w-v)\big] \, dx.
\end{align*}
It follows that
\begin{align*}
\int_{\Omega_3} |\nabla (w-v)|^p   \phi^p\, dx
&\leq  C\int_{\Omega_3} \Big[ |\nabla v|^{p-1} + |\nabla w|^{p-1}\Big] |\nabla \phi| \phi^{p-1} |w-v| \, dx
+C \int_{\Omega_3\setminus D_\delta} |\nabla v(x', 3\delta)|^{p-1} |\nabla (w-v)| \phi^p  \, dx \\
&\quad +C \int_{\Omega_3\setminus D_\delta} |\nabla v(x', 3\delta)|^{p-1} \phi^{p-1}
|w-v| \, dx.
\end{align*}
Then Young and H\"older inequalities  yield
\begin{align*}
\int_{\Omega_3} |\nabla (w-v)|^p   \phi^p\, dx
&\leq C\Bigg[ \|\nabla v\|_{L^p(\Omega_3)}^{p-1} + \|\nabla w\|_{L^p(\Omega_3)}^{p-1} +\Big(\int_{\Omega_\frac52
\setminus D_\delta} |\nabla v(x', 3\delta)|^p
dx\Big)^{\frac{p-1}{p}}\Bigg] \|w-v\|_{L^p(\Omega_\frac52)} \\
&\quad +C \int_{\Omega_\frac52\setminus D_\delta} |\nabla v(x', 3\delta)|^p   \, dx. 
\end{align*}
This together with the fact $\nabla v(x', 3\delta) =\nabla \tilde v(x',0)$,  \eqref{v-lipschitz-proof}, and \eqref{w-close-v-3} 
gives
\[
\int_{\Omega_2} |\nabla (w-v)|^p   \, dx
\leq C\Bigg(\|w-v\|_{L^p(\Omega_\frac52)} +\big|\Omega_\frac52\setminus D_\delta\big|  \Bigg)
\leq C \Big( \delta^{\frac1n}  +\e^p+\delta\Big)\leq C ( \delta^{\frac1n}  +\e^p).
\]
\end{proof}

\begin{proof}[{\bf Proof of Lemma~\ref{lm:local-boundary}}]
 The conclusion follows from Lemmas~\ref{lm:step1}--\ref{lm:step3} noting that
 \[
\|\nabla u -\nabla v\|_{L^p(\Omega_{2\sigma}(y))}  
\leq \|\nabla u -\nabla f\|_{L^p(\Omega_{2\sigma}(y))} +\|\nabla f -\nabla w\|_{L^p(\Omega_{2\sigma}(y))} 
+\|\nabla w -\nabla v\|_{L^p(\Omega_{2\sigma}(y))}. 
 \]
\end{proof}

\section{Density and gradient estimates}\label{interior-density-gradient}
We derive global  gradient estimates for weak solution $u$ of \eqref{GE} by estimating the distribution functions of the maximal
function of $|\nabla  u|^p$. This is carried out in the next two subsections, while the last subsection (Subsection~\ref{sec:Morrey-Spaces}) 
is devoted to proving the main results stated in Section~\ref{sec:Intro}.
\subsection{Density estimates}\label{sub:density-est}
The next result gives a density estimate for the distribution  of $\M_{U}(|\nabla  u|^p)$. It roughly says that  if the maximal 
function $\M_{U}(|\nabla  u|^p)$ is bounded at one point in $B_{\sigma r}(y)$ then this property can be propagated for all points in $B_{\sigma r}(y)$ 
except on a set of small measure $w$. 

\begin{lemma}\label{A-initial-density-est}
Assume that   $\A$ satisfies \eqref{structural-reference-1}--\eqref{structural-reference-3}, $\psi\in W^{1,p}(\Omega)$, and  $\F\in L^{p'}(\Omega;\R^n)$. 
Let  $M>0$ and $w$ be an $A_\infty$ weight.  There exists a constant $N=N(p, n,\Lambda)>1$ satisfying  
for  any $\e>0$, we can find   
small positive constants $\delta$  and $\sigma$ depending only on $\e,\, p,\, n,\, \omega,\, \Lambda,\, M$, 
and $[w]_{A_\infty}$
such that:  if $\Omega$ is  $(\delta,R)$-Reifenberg flat,  
$\lambda>0$, $\theta>0$, $U\subset \Omega$ is an open set,  $\bar y\in \partial \Omega$, and
\begin{align*}
\sup_{0<\rho<\frac{R}{2}}\sup_{y\in \overline{\Omega}\cap B_{\frac{R}{2}}(\bar y)} \Theta_{\Omega_{3\sigma\rho}(y)}(\A)\leq  \delta,
\end{align*}
then
  for any weak solution $u$ of \eqref{GE} with
 $\|u\|_{L^\infty(\Omega)}+\|\psi\|_{L^\infty(\Omega)}\leq \frac{M}{\lambda \theta}$,
 for any $r\in (0, \frac{R}{2})$  satisfying  $\Omega_{4r}(\bar y)\subset U$,
 for  $y\in B_r(\bar y)$ satisfies either $y=\bar y$ or   $B_{4 \sigma r}(y) \subset \Omega_{2r}(\bar y)$, and
\begin{align}
 &B_{\sigma r}(y)\cap \big\{U:\, \M_{U}(|\nabla  u|^p)\leq 1 \big\}\cap
 \{ U: \M_{U}(|\nabla\psi|^p + |\F|^{p'} )\leq \delta\}\neq \emptyset,
\label{A-one-point-condition}
 \end{align}
we have
\begin{equation*}
 w\Big( \{ U:\, \M_{U}(|\nabla  u|^p)> N \}\cap B_{\sigma r}(y)\Big)
< \e  \, w(B_{\sigma r}(y)),
\end{equation*}
\end{lemma}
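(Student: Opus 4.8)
The strategy is the standard Calder\'on--Zygmund density argument in the spirit of \cite{CP,BPS}, executed at the boundary and against an $A_\infty$ weight. First I would pick $N$ so that the bounded comparison gradient produced by Proposition~\ref{lm:localized-compare-gradient} cannot drive the maximal function above $N$. Concretely, choose $N = N(p,n,\Lambda) = \max\{ C(p,n,\Lambda)^p, C'\}$, where $C(p,n,\Lambda)$ is the $L^\infty$ bound on $\nabla v$ in Proposition~\ref{lm:localized-compare-gradient} and $C'$ accounts for the averaging constants on balls of comparable size; the point is that $N$ must be fixed \emph{before} $\e$, $\delta$, $\sigma$ are chosen, which is exactly the order asserted in the statement. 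Given $\e$, I then invoke Proposition~\ref{lm:localized-compare-gradient} with a comparison parameter $\e_1 = \e_1(\e, n, [w]_{A_\infty})$ to be fixed at the end; it supplies $\delta,\sigma$ (depending only on $\e_1,p,n,\omega,\Lambda,M$, hence ultimately on the allowed data) and a function $v \in W^{1,p}(\Omega_{2\sigma r}(y))$ with $\|\nabla v\|_{L^\infty(\Omega_{2\sigma r}(y))} \leq C(p,n,\Lambda)$ and $\fint_{B_{2\sigma r}(y)} |\nabla u - \nabla v|^p\,dx \leq \e_1^p$.

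To apply the proposition one must verify its hypotheses from the single-point condition \eqref{A-one-point-condition}. From the existence of a point $x_0 \in B_{\sigma r}(y)$ with $\M_U(|\nabla u|^p)(x_0) \leq 1$ and $\M_U(|\nabla\psi|^p+|\F|^{p'})(x_0)\leq \delta$, and since $B_{4\sigma r}(y) \subset B_{5\sigma r}(x_0) \subset U$ (using $\sigma < 1$ and $\Omega_{4r}(\bar y)\subset U$), averaging over $B_{4r}(\bar y)$ and over $B_{4\sigma r}(y)$ against these two maximal-function bounds gives exactly the normalizations $\fint_{B_{4r}(\bar y)}|\nabla u|^p \leq N_1$, $\fint_{B_{4\sigma r}(y)}|\nabla u|^p\leq N_1$, and $\fint_{B_{4r}(\bar y)}(|\nabla\psi|^p+|\F|^{p'})\leq C\delta$ needed in Proposition~\ref{lm:localized-compare-gradient} (up to harmless dimensional constants $N_1$ one absorbs by a preliminary rescaling of $u$, or by enlarging $N$). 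The hypothesis $\|u\|_{L^\infty}+\|\psi\|_{L^\infty}\leq M/(\lambda\theta)$ and the smallness of $\Theta_{\Omega_{3\sigma r}(y)}(\A)$ are assumed directly, so the proposition applies.

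Next comes the measure-theoretic core. The claim is the set inclusion
\begin{equation*}
B_{\sigma r}(y) \cap \{U:\, \M_U(|\nabla u|^p) > N\} \subset B_{\sigma r}(y) \cap \{U:\, \M_{\Omega_{2\sigma r}(y)}(|\nabla u - \nabla v|^p) > c_0\}
\end{equation*}
for a suitable $c_0 = c_0(p,n,\Lambda)$: if $x \in B_{\sigma r}(y)$ lies outside the right-hand set, then for small radii $\rho \leq \sigma r$ one splits $\fint_{B_\rho(x)}|\nabla u|^p \leq 2^{p-1}(\fint_{B_\rho(x)}|\nabla u - \nabla v|^p + \|\nabla v\|_\infty^p) \leq 2^{p-1}(c_0 + C(p,n,\Lambda)^p)$, while for large radii one uses that $B_\rho(x)$ is comparable to a ball centered at the good point $x_0$ where the maximal function is already $\leq 1$; choosing $N$ to dominate both bounds establishes the inclusion. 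Then, since $\M_{\Omega_{2\sigma r}(y)}$ is weak-$(1,1)$ with dimensional constant, $|B_{\sigma r}(y)\cap\{\M(\cdots)>c_0\}| \leq (C/c_0)\int_{B_{2\sigma r}(y)}|\nabla u-\nabla v|^p\,dx \leq (C/c_0)\,\e_1^p\,|B_{2\sigma r}(y)| \leq C_2\,\e_1^p\,|B_{\sigma r}(y)|$. Finally, transfer this Lebesgue-measure smallness to the weight $w$ using the $A_\infty$ characterization \eqref{charac-A-infty}: $w(E)/w(B_{\sigma r}(y)) \leq A\,(|E|/|B_{\sigma r}(y)|)^\nu \leq A\,(C_2 \e_1^p)^\nu$, with $A,\nu$ depending only on $n$ and $[w]_{A_\infty}$. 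Choosing $\e_1$ so small that $A(C_2\e_1^p)^\nu < \e$ — which is permissible because $\e_1$ was left free and $\delta,\sigma$ depend on $\e_1$ only through Proposition~\ref{lm:localized-compare-gradient}, hence end up depending on $\e,p,n,\omega,\Lambda,M,[w]_{A_\infty}$ as claimed — completes the proof.

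The main obstacle is bookkeeping the \emph{order of quantifiers}: $N$ must be universal (no dependence on $\e$ or the weight), while $\delta,\sigma$ must end up depending on $[w]_{A_\infty}$ only, never on $w$ itself, and this forces one to fix the comparison tolerance $\e_1$ in terms of the $A_\infty$ constants $(A,\nu)$ \emph{before} calling Proposition~\ref{lm:localized-compare-gradient}. A secondary technical point is the large-radius case in the set-inclusion step: one must check that for $\rho$ between $\sigma r$ and the scale on which $U$ is exhausted, balls $B_\rho(x)$ with $x\in B_{\sigma r}(y)$ are contained in dilates of balls centered at the good point $x_0\in B_{\sigma r}(y)$, so that the bound $\M_U(|\nabla u|^p)(x_0)\leq 1$ can be leveraged — this uses $\sigma$ small and the geometry of $\Omega_{4r}(\bar y)\subset U$, and is where one must be a little careful about which ball the maximal operator is taken over.
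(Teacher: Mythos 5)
Your proposal is correct and follows essentially the same route as the paper: pull the averaged normalizations from the single good point $x_0$, invoke Proposition~\ref{lm:localized-compare-gradient} with a free tolerance parameter, establish the set inclusion by splitting into small and large radii (using $\|\nabla v\|_\infty$ for small $\rho$ and $\M_U(|\nabla u|^p)(x_0)\leq 1$ for large $\rho$), then apply the weak-$(1,1)$ bound and the $A_\infty$ characterization \eqref{charac-A-infty}, fixing the comparison tolerance last. Your explicit remark about absorbing the dimensional constants $(3/2)^n$, $(5/4)^n$ arising from the ball comparisons (either by rescaling $u$ or enlarging $N$) is actually a point the paper silently elides when applying Proposition~\ref{lm:localized-compare-gradient}, so your bookkeeping there is, if anything, slightly more careful.
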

\begin{proof}
By \eqref{A-one-point-condition} there exists $x_0\in B_{\sigma r}(y)\cap U$ such that 
\begin{align}\label{A-max-one-point}
\M_{U}(|\nabla  u|^p)(x_0) \leq 1 \quad \mbox{and}\quad  \M_{U}(|\nabla \psi|^p + |\F|^{p'} )(x_0)\leq \delta.
\end{align}
This together with the facts $B_{4 r}(\bar y)\subset B_{6 r}(x_0)$ and $B_{4 \sigma r}(y)\subset B_{5 \sigma r}(x_0)\cap 
B_{2 r}(\bar y)$ implies that
\begin{align*}
&\frac{1}{|B_{4 r}(\bar y)|} \int_{\Omega_{4 r}(\bar y)}{|\nabla  u|^p dx}
\leq \big(\frac64\big)^n \frac{1}{|B_{6 r}(x_0)|} \int_{B_{6 r}(x_0)\cap U}{|\nabla  u|^p \, dx}\leq \big(\frac32\big)^n,\\
&\frac{1}{|B_{4 \sigma r}(y)|} \int_{\Omega_{4\sigma r}(y)}{|\nabla  u|^p dx}
\leq \big(\frac54\big)^n \frac{1}{|B_{5 \sigma r}(x_0)|} \int_{B_{5\sigma r}(x_0)\cap U}{|\nabla  u|^p \, dx}\leq \big(\frac54\big)^n,\\
  &\frac{1}{|B_{4 r}(\bar y)|} \int_{\Omega_{4r}(\bar y)} \big( |\nabla \psi|^p + |\F|^{p'} \big)\, dx
 \leq \big(\frac64\big)^n\frac{1}{|B_{6 r}(x_0)|} \int_{B_{6 r}(x_0)\cap U}{ \big(|\nabla \psi|^p + |\F|^{p'}\big)\,  dx}\leq 
 \big(\frac32\big)^n \delta.
\end{align*}
In addition, we have from the assumption that either $y=\bar y\in \partial \Omega $ or   $B_{4 \sigma r}(y) \subset \Omega_{2r}(\bar y) $. 
Therefore, we can apply Proposition~\ref{lm:localized-compare-gradient}   for 
$\tilde \e \in (0,1]$ that will be determined later.
As a consequence, we obtain there exists $v\in  W^{1,p}(\Omega_{2 \sigma r}(y))$ such that
\begin{equation}\label{A-eq:nabla-u-v}
\|\nabla  v\|_{L^\infty(\Omega_{2 \sigma  r}(y))}^p
\leq C_*(p,n,\Lambda) 
\quad \mbox{and}\quad 
\frac{1}{|B_{2\sigma r}(y)|}\int_{\Omega_{2\sigma r}(y)}{|\nabla  u - \nabla  v|^p\, dx}\leq \tilde\e^p.
\end{equation}
We claim that \eqref{A-max-one-point} and  \eqref{A-eq:nabla-u-v} yield
\begin{equation}\label{A-key-claim}
\big\{\Omega_{\sigma r}(y): \, \M_{\Omega_{2\sigma r}(y)}(|\nabla u - \nabla v|^p) \leq C_*\big\} 
\subset \big\{\Omega_{\sigma r}(y):\, \M_{U}(|\nabla  u|^p)\leq  N \big\}
\end{equation}
with $N := \max{\{2^{p} C_*, 3^n\}}$.
Indeed, let $x$ be a point in the set on the left hand side of \eqref{A-key-claim}, and consider $B_\rho(x)$. If $\rho\leq \sigma r$, 
then $B_\rho(x)\subset B_{2\sigma r}(y)$ and hence 
\begin{align*}
 \frac{1}{|B_{\rho}(x)|} \int_{B_{\rho}(x)\cap U}{|\nabla  u|^p  dy}
 &\leq \frac{2^{p-1}}{|B_{\rho}(x)|}\Big[ \int_{\Omega_{\rho}(x)}{|\nabla  u -\nabla v|^p dy}
 +\int_{\Omega_{\rho}(x)}{|\nabla  v|^p dy} \Big]\\
 &\leq 2^{p-1} \Big[\M_{\Omega_{2\sigma r}(y)}(|\nabla u - \nabla v|^p)(x)
 +\|\nabla  v\|_{L^\infty(\Omega_{2 \sigma  r}(y))}^p  \Big]\leq 2^p C_*.
\end{align*}
On the other hand, if $\rho> \sigma r$ then $B_\rho(x)\subset B_{3\rho}(x_0)$.
This and the first inequality in \eqref{A-max-one-point} give
\begin{align*}
 \frac{1}{|B_{\rho}(x)|} \int_{B_{\rho}(x)\cap U}{|\nabla  u|^p  dy}
 &\leq  \frac{3^n}{|B_{3\rho}(x_0)|} \int_{B_{3 \rho}(x_0)\cap U}{|\nabla  u|^p  dy}\leq 3^n.
\end{align*}
Therefore, $\M_{U}(|\nabla  u|^p)(x)\leq  N $ and  claim \eqref{A-key-claim} is proved. Notice that \eqref{A-key-claim} is equivalent to 
\begin{equation*}
 \big\{\Omega_{\sigma r}(y):\, \M_{U}(|\nabla  u|^p)>  N \big\} \subset \big\{\Omega_{\sigma r}(y): \, \M_{\Omega_{2\sigma r}(y)}(|\nabla u - \nabla v|^p) > C_*\big\}. 
\end{equation*}
It follows from this, the weak type $1-1$ estimate, and \eqref{A-eq:nabla-u-v} that
\begin{align*}
 \big|\big\{\Omega_{\sigma r}(y):\, \M_{U}(|\nabla  u|^p)>  N \big\}\big|
& \leq \big| \big\{\Omega_{\sigma r}(y): \, \M_{\Omega_{2\sigma r}(y)}(|\nabla u - \nabla v|^p) > C_*\big\} \big|\\
&\leq C \int_{\Omega_{2\sigma r}(y)} |\nabla u - \nabla v|^p dx\leq C_1 \tilde\e^p |B_{\sigma r}(y)|.
\end{align*}
We then infer  from property \eqref{charac-A-infty} that 
\begin{align*}
 w\Big(\big\{\Omega_{\sigma r}(y):\, \M_{U}(|\nabla  u|^p)>  N \big\}\Big)
 &\leq   A \Big(\frac{| \big\{\Omega_{\sigma r}(y):\, \M_{U}(|\nabla  u|^p)>  N \big\}|}{|B_{\sigma r}(y)|}\Big)^{\nu}
w(B_{\sigma r}(y))
\leq A (C_1\tilde\e^{p})^\nu w(B_{\sigma r}(y))
\end{align*}
with   $A$ and $\nu$ being  the  constants given by  characterization \eqref{charac-A-infty} for $w$. We choose $\tilde \e^p  := \min{\{C_1^{-1}(\e A^{-1})^{\frac{1}{\nu}},1\}}$ to complete the proof.
\end{proof}

\begin{lemma}\label{A-boundary-second-density-est}
Assume that   $\A$ satisfies \eqref{structural-reference-1}--\eqref{structural-reference-3}, $\psi\in W^{1,p}(\Omega)$, and $\F\in L^{p'}(\Omega;\R^n)$.
Let  $M>0$ and  $w\in A_s$ for some $1<s<\infty$.  There exists a constant $N=N(p, n,\Lambda)>1$ satisfying  
for  any $\e>0$, we can find   
small positive constants $\delta$  and $\sigma$ depending only on $\e,\, p,\, n,\, \omega,\, \Lambda,\, M, \, s$, and $[w]_{A_s}$
such that:   if $\Omega$ is  $(\delta,R)$-Reifenberg flat,   $\lambda>0$, $\theta>0$, $U\subset \Omega$ is an open set, and
\begin{equation}\label{another-smallness}
\sup_{0<\rho< \frac{R}{2}}\sup_{y\in \overline\Omega} \Theta_{\Omega_{3\sigma\rho}(y)}(\A)\leq  \delta,
\end{equation}
then
  for any weak solution $u$ of \eqref{GE} with
 $\|u\|_{L^\infty(\Omega)}+\|\psi\|_{L^\infty(\Omega)}\leq \frac{M}{\lambda \theta}$,
and for any $y\in \Omega$, $0<r< R/10$ with $\Omega_{21 r}(y)\subset U$ and
\begin{align}
 &B_{\sigma r}(y)\cap  \big\{U:\, \M_{U}(|\nabla  u|^p)\leq 1 \big\}\cap
 \{ U: \M_{U}(|\nabla\psi|^p +|\F|^{p'} )\leq \delta\}\neq \emptyset,
\label{A-boundary-one-point-condition}
 \end{align}
we have
\begin{equation*}
 w\Big( \{ U:\, \M_{U}(|\nabla  u|^p)> N \}\cap B_{\sigma r}(y)\Big)
< \e  \, w(B_{\sigma r}(y)).
\end{equation*}
\end{lemma}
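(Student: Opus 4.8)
\emph{Plan.} The idea is to reduce Lemma~\ref{A-boundary-second-density-est} to Lemma~\ref{A-initial-density-est} (together with its purely interior counterpart, available in \cite{DiN, NP} or obtained by the same but simpler argument) by a case split on $d:=\dist(y,\partial\Omega)$. I keep the same $N=N(p,n,\Lambda)$. Let $\bar y\in\partial\Omega$ be a point realizing $|y-\bar y|=d$, and fix $\sigma<\tfrac14$ (so that $4\sigma r<r$); the values of $\delta$ and $\sigma$ are taken at the very end as minima of the thresholds produced in the three cases. Throughout, the bound $\|u\|_{L^\infty(\Omega)}+\|\psi\|_{L^\infty(\Omega)}\le M/(\lambda\theta)$ and hypothesis \eqref{another-smallness} are exactly what the invoked density lemmas require, so they transfer verbatim.

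\emph{Case $0<d<4\sigma r$ ($y$ very close to $\partial\Omega$).} Since $|y-\bar y|=d<4\sigma r$ one has $B_{\sigma r}(y)\subset B_{5\sigma r}(\bar y)\subset B_{9\sigma r}(y)$, so the doubling property of $w$ in Lemma~\ref{weight:basic-pro} gives $w(B_{5\sigma r}(\bar y))\le C_0\,w(B_{\sigma r}(y))$ with $C_0=C_0(n,s,[w]_{A_s})$. Because $d<4\sigma r<r$ we get $B_{20r}(\bar y)\subset B_{21r}(y)$, hence $\Omega_{20r}(\bar y)\subset U$, while $5r<R/2$ since $r<R/10$; also the point furnished by \eqref{A-boundary-one-point-condition} lies in $B_{\sigma r}(y)\subset B_{5\sigma r}(\bar y)$. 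Thus Lemma~\ref{A-initial-density-est} applies with base point $\bar y$, with its interior point taken to be $\bar y$ itself (the $y=\bar y$ alternative), with radius $5r$ in place of $r$, and with $\e$ replaced by $\e/C_0$ (note $[w]_{A_\infty}\le[w]_{A_s}$, and \eqref{another-smallness} is stronger than the $\Theta$-smallness required there); this yields
\[
  w\big(\{U:\M_U(|\nabla u|^p)> N\}\cap B_{\sigma r}(y)\big)
  \le w\big(\{U:\M_U(|\nabla u|^p)> N\}\cap B_{5\sigma r}(\bar y)\big)
  < \tfrac{\e}{C_0}\,w(B_{5\sigma r}(\bar y))
  \le \e\, w(B_{\sigma r}(y)).
\]

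\emph{Case $4\sigma r\le d<r$ ($y$ near $\partial\Omega$, interior ball).} Here $B_{4\sigma r}(y)\subset\Omega$ (because $d\ge 4\sigma r$) and $B_{4\sigma r}(y)\subset B_{2r}(\bar y)$ (because $d+4\sigma r<2r$), while $\Omega_{4r}(\bar y)\subset\Omega_{5r}(y)\subset U$ and $4r<R/2$; hence Lemma~\ref{A-initial-density-est} applies directly with base point $\bar y$, with $y$ as its interior point (the $B_{4\sigma r}(y)\subset\Omega_{2r}(\bar y)$ alternative), radius $r$, and the same $\e$, giving the conclusion on $B_{\sigma r}(y)$ with no doubling needed. \emph{Case $d\ge r$ (interior ball).} Now $\overline{B_r(y)}\subset\Omega$ and \eqref{GE} holds in $B_r(y)$ with no boundary; the estimate follows by repeating the proof of Lemma~\ref{A-initial-density-est} with Proposition~\ref{lm:localized-compare-gradient} replaced by its interior analogue — that is, comparing $\nabla u$ on $B_{4\sigma r}(y)$ first to $\nabla f$ (the $z$-frozen equation) and then to $\nabla w$ (the constant-coefficient homogeneous equation), using interior higher integrability and oscillation bounds (Theorems~\ref{classical-Holder}, \ref{thm:inter-regularity}(i)) in place of their boundary versions and then the interior Lipschitz bound for $\nabla w$ (already invoked in the case $B_{4\sigma}(y)\subset\Omega$ of Lemma~\ref{lm:step3}) to take $v=w$; the inclusion \eqref{A-key-claim} and the passage to $w$-measure via \eqref{charac-A-infty} (using $w\in A_s\subset A_\infty$) go through unchanged.

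\emph{Conclusion of the argument.} Finally I would pick $\sigma$ smaller than all three case thresholds (and $<\tfrac14$) and then $\delta$ smaller than all three (the density lemmas above remain valid after decreasing $\sigma$, at the cost of shrinking $\delta$), so the resulting $\delta,\sigma$ depend only on $\e,p,n,\omega,\Lambda,M,s,[w]_{A_s}$ — through $\e/C_0$, $[w]_{A_\infty}\le[w]_{A_s}$, and $C_0=C_0(n,s,[w]_{A_s})$. I expect the only genuine work to be the geometric bookkeeping that makes the hypotheses ``$r<R/10$'' and ``$\Omega_{21r}(y)\subset U$'' exactly sufficient for the radius-$5r$ reduction in the first case, together with the weight-doubling transfer of the density bound from $B_{5\sigma r}(\bar y)$ to $B_{\sigma r}(y)$ — the one place where the full $A_s$ (not merely $A_\infty$) hypothesis on $w$ enters, which is why $[w]_{A_s}$ appears among the parameters of this lemma.
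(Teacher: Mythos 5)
Your proposal is correct and matches the paper's proof essentially step for step: the paper likewise splits into three cases — interior ($B_r(y)\subset\Omega$, handled by the interior density estimate from \cite{DiN}), near-boundary ($B_{4\sigma r}(y)\subset\Omega$ but $B_r(y)\cap\partial\Omega\neq\emptyset$, applying Lemma~\ref{A-initial-density-est} at radius $r$ with interior point $y$), and boundary ($B_{4\sigma r}(y)\cap\partial\Omega\neq\emptyset$, applying Lemma~\ref{A-initial-density-est} at radius $5r$ with $y=\bar y$ and $\e_1=9^{-ns}\e/[w]_{A_s}$, then transferring via $A_s$-doubling) — and your distance-based trichotomy in $d=\dist(y,\partial\Omega)$ is just a reformulation of those same geometric conditions, with the same constants $9^{ns}[w]_{A_s}$ appearing in the weight-doubling step.
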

\begin{proof}
We consider the following possibilities:

{\bf Case 1} (away from the boundary): $B_r(y)\subset \Omega$. 
Then  we are in the interior case and $B_r(y)=\Omega_r(y)\subset  U$.
Hence, we obtain the  conclusion by using Lemma~5.1 in
\cite{DiN}. 

{\bf Case 2} (near boundary): $B_{4 \sigma r}(y)\subset \Omega$ but $B_r(y)\cap \partial \Omega\neq \emptyset$.   Let 
$\bar y\in B_{ r}(y)\cap \partial \Omega$. Then 
\begin{equation*}
B_{ r}(y) \subset B_{2  r}(\bar y)\subset B_{3  r}(y).
\end{equation*}
In particular, we have $B_{4 \sigma r}(y)\subset \Omega_{2r}(\bar y)$. 
 Moreover, $\Omega_{4 r}(\bar y)\subset \Omega_{5r}(y) \subset U$ since $B_{4 r}(\bar y)\subset 
B_{5r}(y)$.
Therefore, we can use  Lemma~\ref{A-initial-density-est}
to obtain the desired result.


{\bf Case 3} (boundary): $B_{4\sigma r}(y)\cap \partial \Omega\neq \emptyset$. Let 
$\bar y\in B_{4\sigma r}(y)\cap \partial \Omega$. Then
\begin{equation}\label{A-y-y0}
B_{\sigma r}(y) \subset B_{5 \sigma r}(\bar y)\subset B_{9 \sigma r}(y).
\end{equation}
This together with  assumption \eqref{A-boundary-one-point-condition} yields
\[
B_{5 \sigma r}(\bar y)\cap  \big\{U:\, \M_{U}(|\nabla  u|^p)\leq 1 \big\}\cap
 \{ U: \M_{U}(|\nabla\psi|^p +|\F|^{p'} )\leq \delta\}\neq \emptyset.
 \]
We also have $\Omega_{20 r}(\bar y)\subset \Omega_{21 r}(y) \subset U$ as $B_{20 r}(\bar y)\subset B_{21 r}(y)$.
 Therefore, by applying  Lemma~\ref{A-initial-density-est} for  $y=\bar y$ and
 $\e_1:= 9^{-n s}\e/ [w]_{A_s}$, we obtain 
\begin{equation*}
 w\Big( \{ U:\, \M_{U}(|\nabla  u|^p)> N \}\cap B_{5\sigma r}(\bar y)\Big)
< \e_1   \, w(B_{5 \sigma r}(\bar y)).
\end{equation*}
It follows from this and \eqref{A-y-y0}  that
\begin{equation*}
 w\Big( \{ U:\, \M_{U}(|\nabla  u|^p)> N \}\cap B_{\sigma r}(y)\Big)
< \e_1   \, w(B_{9 \sigma r}(y))
\leq \e_1 9^{ns}[w]_{A_s} w(B_{ \sigma r}(y))
=\e w(B_{ \sigma r}(y)).
\end{equation*}
\end{proof}

Let us fix $\bar R :=R/100$ and a finite collection of points $\{z_i\}_{i=1}^L\subset
\overline \Omega$ such that
$ \displaystyle\overline\Omega\subset \bigcup_{i=1}^L B_{\sigma \bar R}(z_i)$. We also consider an open set $V\subset \Omega$
satisfying: there exists a constant $c_0\in (0,1]$ such that
\begin{equation}\label{V}
|B_{\rho}(z)\cap V |
\geq c_0\, |B_{\rho}(z)| \quad \mbox{for all } z\in  \partial V  \mbox{ and all } 0<\rho<\sigma \bar R.
\end{equation}
In view of Lemma~\ref{A-initial-density-est} and  as in  \cite[Lemma~3.8]{MP2}, we can apply 
Krylov-Safanov lemma, which is  a variation of the Vitali covering lemma, to obtain:
\begin{lemma}\label{A-second-density-est}
Assume that   $\A$ satisfies \eqref{structural-reference-1}--\eqref{structural-reference-3}, $\psi\in W^{1,p}(\Omega)$, and $\F\in L^{p'}(\Omega;\R^n)$. 
Let  $M>0$ and  $w\in A_s$ for some $1<s<\infty$. 
There exists a constant $N=N(p, n,\Lambda)>1$ satisfying  
for  any $\e>0$, we can find   
small positive constants $\delta$  and $\sigma$ depending only on $\e,\, p,\, n,\, \omega,\,\Lambda,\, M,\, s$, and $[w]_{A_s}$
such that:  if $\Omega$ is  $(\delta,R)$-Reifenberg flat, \eqref{another-smallness} holds,  $\lambda>0$, $\theta>0$, and 
$V\subset U\subset\Omega$ are open sets satisfying
\eqref{V} and $\Omega_{\frac{R}{2}}(y)\subset U$ for every $y\in V$, 
then   for any weak solution $u\in W^{1,p}(\Omega)$ of \eqref{GE} with
$\|u\|_{L^\infty(\Omega)}+\|\psi\|_{L^\infty(\Omega)}\leq \frac{M}{\lambda \theta}$  and 
\begin{equation}\label{density-ass}
  w \Big( \{V: 
\M_{U}(|\nabla  u|^p)> N \}\Big) <\e \, w(B_{\sigma \bar R}(z_i)) \quad \forall i=1,2,...,L,
\end{equation}
we have
\begin{align*}
w \Big(\{V: \M_{U}(|\nabla u|^p)> N\}\Big)
\leq \big(\frac{10^n}{c_0}\big)^s [w]_{A_s}^2\, \e \, \Big[
w\Big(\{V: \M_{U}(|\nabla u|^p)> 1\}\Big)
+w\Big(\{ V: \M_{U}(|\nabla\psi|^p +|\F|^{p'})> \delta \}\Big)\Big].\nonumber
\end{align*}
\end{lemma}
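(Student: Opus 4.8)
The plan is to deduce this from Lemma~\ref{A-boundary-second-density-est} by a Krylov--Safonov (modified Vitali) covering argument for the doubling measure $w$. Put
\[
E:=\{V:\,\M_{U}(|\nabla u|^p)>N\},\qquad
F:=\{V:\,\M_{U}(|\nabla u|^p)>1\}\cup\{V:\,\M_{U}(|\nabla\psi|^p+|\F|^{p'})>\delta\},
\]
where $N=N(p,n,\Lambda)$ and $\delta,\sigma$ are the constants furnished by Lemma~\ref{A-boundary-second-density-est} for the given $\e$. Since $N>1$ we have $E\subset F\subset V$, all three sets are measurable, and by subadditivity of $w$ it suffices to prove $w(E)\le(10^n/c_0)^s[w]_{A_s}^2\,\e\,w(F)$.

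First I would record the two structural hypotheses that make the covering lemma run. Hypothesis (i) --- that $E$ does not occupy too much of any covering ball --- is exactly the standing assumption \eqref{density-ass}, namely $w(E)<\e\,w(B_{\sigma\bar R}(z_i))$ for all $i$, together with $\overline\Omega\subset\bigcup_{i=1}^LB_{\sigma\bar R}(z_i)$. Hypothesis (ii) is: for every $y\in V$ and every $\rho\in(0,\sigma\bar R)$,
\[
w\big(E\cap B_{\rho}(y)\big)\ge\e\,w\big(B_{\rho}(y)\big)\ \Longrightarrow\ B_{\rho}(y)\cap V\subset F.
\]
I would prove the contrapositive. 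If $B_\rho(y)\cap V\not\subset F$, pick $x_1\in B_\rho(y)\cap V$ with $\M_U(|\nabla u|^p)(x_1)\le1$ and $\M_U(|\nabla\psi|^p+|\F|^{p'})(x_1)\le\delta$, and set $r:=\rho/\sigma$. Then $r<\bar R=R/100<R/10$; moreover $y\in V$ gives $\Omega_{R/2}(y)\subset U$, and $21r<21R/100<R/2$, so $\Omega_{21r}(y)\subset\Omega_{R/2}(y)\subset U$. Thus all hypotheses of Lemma~\ref{A-boundary-second-density-est} hold for this $y$ and $r$ --- the one-point condition \eqref{A-boundary-one-point-condition} with witness $x_1\in B_{\sigma r}(y)$, and the smallness \eqref{another-smallness} which is assumed here --- and the lemma yields $w(\{U:\M_U(|\nabla u|^p)>N\}\cap B_{\sigma r}(y))<\e\,w(B_{\sigma r}(y))$. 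Because $V\subset U$ we have $E\subset\{U:\M_U(|\nabla u|^p)>N\}$, whence $w(E\cap B_\rho(y))<\e\,w(B_\rho(y))$, contradicting the left side of (ii).

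With (i) and (ii) established, I would invoke the modified Vitali covering lemma for doubling measures in the form used in \cite[Lemma~3.8]{MP2}: $w$ is doubling with constant at most $2^{ns}[w]_{A_s}$ by Lemma~\ref{weight:basic-pro}, and $V$ satisfies the measure-density property \eqref{V} with constant $c_0$. Running the stopping-time/$5r$-covering scheme and controlling each enlargement by the doubling bound \eqref{strong-doubling} (the density \eqref{V} is what converts $w(B_\rho\cap V)$ back into $w(B_\rho)$ at boundary points of $V$) gives $w(E)\le(10^n/c_0)^s[w]_{A_s}^2\,\e\,w(F)$, and expanding $w(F)$ by subadditivity finishes the proof. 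The genuine PDE content is entirely inside Lemma~\ref{A-boundary-second-density-est}, which already handles the interior, near-boundary and boundary sub-cases, so the only delicate points here are purely of bookkeeping type: checking the scale inclusions $r=\rho/\sigma<R/100$ and $\Omega_{21r}(y)\subset U$ so that Lemma~\ref{A-boundary-second-density-est} is applicable at every admissible center and radius, and tracking the weighted constants --- the powers of $10^n/c_0$ and of $[w]_{A_s}$ --- that arise when the Vitali machinery is carried out with $w$ rather than Lebesgue measure.
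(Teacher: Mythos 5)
Your proposal is correct and follows essentially the same route as the paper: form the sets $E$ and $F$, verify the two hypotheses of the Krylov--Safonov/modified-Vitali covering lemma (condition (i) being \eqref{density-ass}, condition (ii) being the contrapositive of Lemma~\ref{A-boundary-second-density-est} with the scale checks $r<R/10$ and $\Omega_{21r}(y)\subset U$), and then run the $5r$-covering with doubling and the density condition \eqref{V} to produce the factor $(10^n/c_0)^s[w]_{A_s}^2$. The only small bookkeeping point is that the stopping radii produced by the covering argument live in $(0,2\sigma\bar R)$ rather than $(0,\sigma\bar R)$, since the density hypothesis \eqref{density-ass} together with $\overline\Omega\subset\bigcup_i B_{\sigma\bar R}(z_i)$ only forces $m(r)<\e$ for $r\ge 2\bar R$; your verification of the hypotheses of Lemma~\ref{A-boundary-second-density-est} nonetheless still goes through on this slightly larger range, since $2\bar R=R/50<R/10$ and $21\cdot 2\bar R<R/2$.
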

\begin{proof}
For $\e>0$, let $N$, $\delta$, and $\sigma$ be the corresponding constants given by Lemma~\ref{A-boundary-second-density-est}. 
 Set
\[
C=\{ V:\, \M_{U}(|\nabla   u|^p)> N \} \quad\mbox{and}\quad  
D=\{ V:\, \M_{U}(|\nabla  u|^p)> 1 \}\cup \{V:\, \M_{U}(|\nabla\psi|^p + |\F|^{p'})> \delta \}.
\]
Let $y$ be any point in $C$, and define 
\[
m(r):= \frac{w(C\cap B_{\sigma r}(y))}{w(B_{\sigma r}(y))} \quad\mbox{for}\quad  r>0.
\]
The lower semicontinuity of $\M_{U}(|\nabla   u|^p)$ implies that $C$ is open, and hence $\lim_{r\to 0^+} m(r)= 1$.
Moreover, 
 as $y\in B_{\sigma \bar R}(z_i)$ for some $i$ we have from  condition  \eqref{density-ass} that
 \[
 m(r) \leq \frac{w(C)}{w(B_{\sigma \bar R}(z_i))} <\e \quad \forall r\geq 2 \bar R.
 \]
 Therefore, there exists $r_y\in (0,2 \bar R)$ such that
$ m(r_y)=\e$ and $m(r)<\e $ for all $r>r_y$. That is,
\begin{equation}\label{A-density-choice}
w(C\cap B_{\sigma r_y}(y))=\e\, w(B_{\sigma r_y}(y))\quad \mbox{and}\quad w(C\cap B_{\sigma r}(y))< \e \, w(B_{\sigma r}(y))\quad \forall r>r_y.
\end{equation}
Thus by Vitali's covering lemma we can select a countable sequence $\{y_i\}_{i=1}^\infty$ such that $\{B_{\sigma r_i}(y_i)\}$ is a sequence of  disjoint balls and
\[
C\subset \bigcup_{i=1}^\infty B_{5\sigma  r_i}(y_i),
\]
where $r_i := r_{y_i}$. Since 
$w\Big( \{ U:\, \M_{U}(|\nabla  u|^p)> N \}\cap B_{\sigma r_i}(y_i)\Big)\geq w(C\cap B_{\sigma r_i}(y_i))=\e\, w(B_{\sigma r_i}(y_i))$ by \eqref{A-density-choice} and $r_i< 2\bar R =R/50$, it follows 
from Lemma~\ref{A-boundary-second-density-est} that
\begin{equation}\label{A-inD}
B_{\sigma r_i}(y_i)\cap V \subset D.
\end{equation}
We have 
\begin{align}\label{A-w-Cballs}
w(C) 
&\leq w\Big(\bigcup_{i=1}^\infty  B_{5 \sigma r_i}(y_i) \cap C\Big)\leq \sum_{i=1}^\infty w(B_{5\sigma  r_i}(y_i) \cap C)\nonumber\\
&\leq \e  \sum_{i=1}^\infty w(B_{5\sigma  r_i}(y_i) )\leq \e [w]_{A_s}  5^{ n s}\sum_{i=1}^\infty w(B_{\sigma  r_i}(y_i) ).
\end{align}
Let $y\in V$ and $0<\rho< 2\sigma \bar R$. If $B_{\frac{\rho}{2}}(y)\subset V$, then
$|B_{\rho}(y)\cap V | \geq |B_{\frac{\rho}{2}}(y)|= 2^{-n}|B_{\rho}(y)|$. Otherwise, there exists $z\in B_{\frac{\rho}{2}}(y)\cap\partial V$.
Then $B_{\frac{\rho}{2}}(z)\subset B_\rho(y)$ and hence it follows from  assumption \eqref{V} that
$|B_{\rho}(y)\cap V |\geq |B_{\frac{\rho}{2}}(z)\cap V |\geq c_0 |B_{\frac{\rho}{2}}(z)| =c_0 2^{-n}|B_{\rho}(y)|$. Combining these,
we conclude that
\[
 \sup_{y\in V, 0<\rho< 2\sigma \bar R} \frac{|B_{\rho}(y)|}{|B_{\rho}(y)\cap V |} \leq \frac{2^n}{c_0}.
\]
This together with property \eqref{strong-doubling}  gives
\[
w(B_{\sigma r_i}(y_i))\leq  [w]_{A_s} \Big( \frac{|B_{\sigma r_i}(y_i)|}{|B_{\sigma r_i}(y_i)\cap V |} \Big)^s 
w(B_{\sigma r_i}(y_i)\cap V) \leq [w]_{A_s} \big(\frac{2^n}{c_0}\big)^s\,
w(B_{\sigma r_i}(y_i)\cap V).
\]
We deduce  from  this and    \eqref{A-inD}--\eqref{A-w-Cballs} that
\[
w(C) \leq  \e [w]_{A_s}^2  \big(\frac{10^n}{c_0}\big)^s\sum_{i=1}^\infty w(B_{ \sigma r_i}(y_i) \cap V)
= 
\e [w]_{A_s}^2 \big(\frac{10^n}{c_0}\big)^s w\Big(\bigcup_{i=1}^\infty B_{\sigma  r_i}(y_i) 
\cap V \Big)\leq \e [w]_{A_s}^2  \big(\frac{10^n}{c_0}\big)^s w(D),
\]
which yields the desired estimate.
\end{proof}
\subsection{Global gradient estimates in weighted $L^q$ spaces}\label{sub:Lebesgue-Spaces}
We are now ready to prove  Theorem~\ref{thm:main} and Theorem~\ref{thm:localized-main}.

\begin{proof}[\textbf{Proof of Theorem~\ref{thm:main}}]
Let  $N=N(p,n,\Lambda)>1$ be as in  Lemma~\ref{A-second-density-est}, and let $l=q/p\geq 1$. We choose 
$\e=\e(p, q, n,\Lambda, s, [w]_{A_s} )>0$ be such that
\[
\e_1 \eqdef 20^{n s } [w]_{A_s}^2\e = \frac{1}{2 N^{l}}, 
\]
and let $\delta$  and $\sigma$ (depending only on $p,\,q,\,n,\,\omega, \,\Lambda,\, M, \,  s$, and $[w]_{A_s}$) be the corresponding 
positive constants given by Lemma~\ref{A-second-density-est}. 
Assume for the moment that 
 $u$ is a weak solution of \eqref{GE} satisfying 
\begin{equation}\label{initial-distribution-condition}
w\big( \{\Omega: 
\M_{\Omega}(|\nabla u|^p)> N \}\big) <  \e \, w(B_{\sigma \bar R}(z_i)) \quad \forall i=1,2,...,L.
\end{equation}
Notice  that $V=\Omega$ satisfies condition \eqref{V}   with $c_0=1/2^n$ since for any 
 $z\in \partial \Omega$ and $0<\rho< \sigma \bar R$ we have from  the Reifenberg flat condition that 
\begin{align*}
|B_{\rho}(z)\cap \Omega |
\geq \big|B_{\rho}(z)\cap \{x: x_n >z_n + \rho \delta\} \big|
\geq \frac{(1-\delta)^n}{n} |B_{\rho}(z)|
\geq \frac{1}{2^n} \, |B_{\rho}(z)|.
\end{align*}
Thus by applying  Lemma~\ref{A-second-density-est} for $V=U=\Omega$ we obtain  
\beq\label{initial-distribution-est}
w\big(\{\Omega: \M_{\Omega}(|\nabla u|^p)> N\}\big)
\leq \e_1  \left[
w\big(\{\Omega: \M_{\Omega}(|\nabla u|^p)> 1\}\big)
+ w\big(\{ \Omega: \M_{\Omega}(|\nabla\psi|^p +|\F |^{p'})> \delta \}\big)\right].
\eeq
 Let us iterate this estimate by considering
\[
u_1(x) = \frac{u(x)}{N^{\frac1p}}, \quad
\psi_1(x) = \frac{\psi(x)}{N^{\frac1p}},\quad  \F_1(x) = \frac{\F(x)}{N^{\frac{p-1}{p}}}\quad \mbox{and}\quad \lambda_1 = N^{\frac1p}\lambda.
\]
It is clear that $\|u_1\|_{L^\infty(\Omega)}+\|\psi_1\|_{L^\infty(\Omega)}\leq \frac{M}{\lambda_1 \theta}$,  and  $u_1\in W^{1,p}(\Omega)$ 
is a weak solution of
$\div \Big[\frac{\A(x,\lambda_1 \theta u_1, \lambda_1 \nabla u_1)}{\lambda_1^{p-1}}  \Big]=\div  \F_1$
in $\Omega$ and $u_1=\psi_1$ on $ \partial\Omega$.
 Moreover, thanks to \eqref{initial-distribution-condition} we have
\begin{align*}
w\big( \{\Omega: 
\M_{\Omega}(|\nabla u_1|^p)> N \}\big) &= w\big( \{\Omega: 
\M_{\Omega}(|\nabla u|^p)> N^2 \}\big) < \e \, w(B_{\sigma \bar R}(z_i)) \quad \forall i=1,2,...,L.
\end{align*}
Therefore, by applying  Lemma~\ref{A-second-density-est} to $u_1$ we get
\begin{align*}
w\big(\{\Omega: \M_{\Omega}(|\nabla u_1|^p)> N\}\big)
&\leq \e_1 \left[
w\big(\{\Omega: \M_{\Omega}(|\nabla u_1|^p)> 1 \}\big)
+ w\big(\{ \Omega: \M_{\Omega}(|\nabla\psi_1|^p +|\F_1 |^{p'} )> \delta \}\big)\right]\\
&= \e_1  \left[ 
w\big(\{\Omega: \M_{\Omega}(|\nabla u|^p)> N \}\big)
+ w\big(\{ \Omega: \M_{\Omega}(|\nabla\psi|^p +|\F|^{p'})> \delta N\}\big) \right].
\end{align*}
We infer from this and  \eqref{initial-distribution-est} that
\begin{align}\label{first-iteration-est}
w\big(\{\Omega: \M_{\Omega}(|\nabla u|^p)> N^2\}\big)
&\leq \e_1^2 
w\big(\{\Omega: \M_{\Omega}(|\nabla u|^p)> 1 \}\big)\\
&\quad + \e_1^2 w\big(\{ \Omega: \M_{\Omega}(G^p)> \delta\} \big)+
\e_1w\big(\{ \Omega: \M_{\Omega}(G^p)> \delta N\}\big), 
\nonumber
\end{align}
where $G^p:=|\nabla\psi|^p +|\F|^{p'}$.
By repeating the iteration, we then conclude that
\begin{align*}\label{decay-distr}
w\big(\{\Omega: \M_{\Omega}(|\nabla u|^p)> N^k\}\big)
&\leq \e_1^k 
w\big(\{\Omega: \M_{\Omega}(|\nabla u|^p)> 1 \}\big)
+ \sum_{i=1}^k\e_1^iw\big(\{ \Omega: \M_{\Omega}(G^p)> \delta N^{k-i}\} \big)\quad \forall k\geq 1.
\end{align*}
This together with  
\begin{align*}
\int_{\Omega}\M_{\Omega}(|\nabla u|^p)^{l} \, dw  
&=l \int_0^\infty t^{l-1} w\big(\{\Omega: \M_{\Omega}(|\nabla u|^p)>t\}\big)\, dt\\
&=l \int_0^{N} t^{l -1} w\big(\{\Omega: \M_{\Omega}(|\nabla u|^p)>t\}\big)\, dt
+l \sum_{k=1}^\infty\int_{N^{k}}^{N^{k+1}} t^{l -1} w\big(\{\Omega: \M_{\Omega}(|\nabla u|^p)>t\}\big)\, dt\\
&\leq N^{l} w(\Omega) + (N^{l} -1) \sum_{k=1}^\infty N^{l k}w\big(\{\Omega: \M_{\Omega}(|\nabla u|^p)>N^k\}\big)
\end{align*}
gives
\begin{align*}
\int_{\Omega}\M_{\Omega}(|\nabla u|^p)^{l} \, dw  
&\leq  N^{l} w(\Omega) + (N^{l} -1)w(\Omega) \sum_{k=1}^\infty (\e_1 N^{l})^k
+\sum_{k=1}^\infty\sum_{i=1}^k (N^{l} -1)N^{l k}\e_1^i w\big(\{ \Omega: 
\M_{\Omega}(G^p)> \delta N^{k-i}\} \big).
\end{align*}
But we have
\begin{align*}
&\sum_{k=1}^\infty\sum_{i=1}^k (N^{l} -1)N^{{l} k}\e_1^iw\big(\{ \Omega: 
\M_{\Omega}(G^p)> \delta N^{k-i}\} \big)\\
&=\big(\frac{N}{\delta}\big)^{l}\sum_{i=1}^\infty (\e_1 N^{l})^i\left[\sum_{k=i}^\infty 
(N^{l} -1)\delta^{{l}}N^{{l} (k-i-1)} w\big(\{\Omega: \M_{\Omega}(G^p)>
\delta N^{k-i}\} \big)\right]\\
&=\big(\frac{N}{\delta}\big)^{l}\sum_{i=1}^\infty (\e_1 N^{l})^i\left[\sum_{j=0}^\infty (N^{l} -1)\delta^{{l}}N^{{l} (j-1)}
w\big(\{ \Omega: \M_{\Omega}(G^p)> \delta N^{j}\} \big)\right]
\leq \big(\frac{N}{\delta}\big)^{l} \Big[\int_{\Omega}\M_{\Omega}(G^p)^{l}  \, dw \Big]
\sum_{i=1}^\infty (\e_1 N^{l})^i.
\end{align*}
Thus we infer that
\begin{align*}
\int_{\Omega}\M_{\Omega}(|\nabla u|^p)^{l} \, dw 
&\leq N^{l} w(\Omega) + \left[ (N^{l} -1) w(\Omega) +\big(\frac{N}{\delta}\big)^{l} 
\int_{\Omega}\M_{\Omega}(G^p)^{l}  \, dw  \right] \sum_{k=1}^\infty (\e_1 N^{l})^k\\
&= N^{l} w(\Omega)+ \left[ (N^{l} -1)w(\Omega) +\big(\frac{N}{\delta}\big)^{l} 
\int_{\Omega}\M_{\Omega}(G^p)^{l}  \, dw \right] \sum_{k=1}^\infty 2^{-k}
\leq C\left( w(\Omega)+ \int_{\Omega}\M_{\Omega}(G^p)^{l} \, dw  \right)
\end{align*}
with the constant  $C$ depending only on $p$, $q$, $n$, $\omega$, $ \Lambda$,     $M$, $s$, and $[w]_{A_s}$. This together with the 
facts $l =q/p$ and  $
|\nabla u(x)|^p  \leq  
\M_{\Omega}(|\nabla u|^p)(x)$
for a.e.  $x\in \Omega$ yields
\begin{equation}\label{initial-desired-L^p-estimate}
\fint_{\Omega}|\nabla u|^q dw \leq C\left( 1+ \fint_{\Omega}\M_{\Omega}(|\nabla\psi|^p +|\F|^{p'})^{\frac{q}{p}} \, dw  \right).
\end{equation}

We next remove the extra assumption  
\eqref{initial-distribution-condition} for $u$. Notice that for any constant $\Upsilon>0$, by using the weak type $1-1$ estimate for the
maximal function  we get
\begin{align}\label{weak1-1}
\big| \{\Omega: 
\M_{\Omega}(|\nabla u|^p)> N  \Upsilon^p\}\big|
\leq \frac{C }{N \Upsilon^p}\int_{\Omega} |\nabla u|^p dx.
\end{align}
We have from \cite[Lemma~8(iii)]{LMS} that $R\leq 4 d$ with $d:=\diam(\Omega)$, which implies that
$\displaystyle\bigcup_{i=1}^L B_{\sigma \bar R}(z_i)\subset \hat B:= B(z_1,  2 d)$. 
Let $\bar{u}(x,t) = u(x,t)/ \Upsilon$, where
\[ \Upsilon^p := \frac{2 C   \|\nabla u\|_{L^p(\Omega)}^p }{N  |\hat B|} \Big[\big(\frac{2 d}{\sigma \bar R}\big)^{n s} \frac{[w]_{A_s} K}{\e} \Big]^{\frac{1}{\beta}}
\]
with
$K$ and $\beta$ being the constants given by Lemma~\ref{weight:basic-pro}. 
Then it follows from \eqref{weak1-1} that
\[\big| \{\Omega: 
\M_{\Omega}(|\nabla \bar{u}|^p)> N\}\big|
\leq  2^{-1} \Big[\big(\frac{\sigma \bar R}{2 d}\big)^{n s} \frac{\e}{[w]_{A_s} K} \Big]^{\frac{1}{\beta}} |\hat B|.
\]
This together with property \eqref{strong-doubling} for $w$ gives
\[w \big( \{\Omega: 
\M_{\Omega}(|\nabla \bar{u}|^p)> N\}\big)
\leq  \big(\frac{\sigma \bar R}{2 d}\big)^{n s} \frac{2^{-\beta} \e}{  [w]_{A_s} } \, w(\hat B)\leq 
2^{-\beta} \e  \, w(B_{\sigma \bar R}(z_i)) \quad \forall i=1,2,...,L.
\]
 Hence we can apply \eqref{initial-desired-L^p-estimate} to $\bar{u}$ with $\F$, $\psi$, and $\lambda$ being replaced by 
$\bar \F= \F/\Upsilon^{p-1}$, $\bar\psi =\psi/\Upsilon$, and $\bar \lambda = \lambda \Upsilon$. By reversing back to the  functions $u$ and  $\F$,  we then  obtain 
\begin{align*}
\fint_{\Omega}|\nabla u|^q dw 
&\leq C\left( \Upsilon^q+ \fint_{\Omega}\M_{\Omega}(|\nabla\psi|^p +|\F|^{p'})^{\frac{q}{p}} \, dw \right)
\leq  C\left( \|\nabla u\|_{L^p(\Omega)}^q+ \fint_{\Omega}\M_{\Omega}(|\nabla\psi|^p +|\F|^{p'})^{\frac{q}{p}} \, dw  \right)
\end{align*}
with  $C$ now also depending on $\bar R=R/100$ and $\diam(\Omega)$.
This yields estimate \eqref{main-estimate} as desired.
\end{proof}

\begin{proof}[{\bf Proof of Theorem~\ref{thm:localized-main}}]
Let  $N=N(p,n,\Lambda)>1$ be as in  Lemma~\ref{A-second-density-est}, and let $l=q/p\geq 1$. We choose 
$\e=\e(p, q, n,\Lambda, s, [w]_{A_s} )>0$ be such that
\[
\e_1 \eqdef 80^{n s } [w]_{A_s}^2\e = \frac{1}{2 N^{l}}, 
\]
and let $\delta$  and $\sigma$ (depending only on $p,\,q,\,n,\,\omega, \,\Lambda,\, M, \,  s$, and $[w]_{A_s}$) be the corresponding 
positive constants given by Lemma~\ref{A-second-density-est}.
Let $y_0\in \overline\Omega$. We first consider the case $r\geq R/2$. Let $U=\Omega_{2r}(y_0)$  and $V=\Omega_{r}(y_0)$. Since $r\geq R/2$, 
we get $\Omega_{\frac{R}{2}}(y)\subset U$ for every $y\in V$. We next verify condition \eqref{V} for $V$ in order to apply  Lemma~\ref{A-second-density-est}. 
For any  $z\in \partial V$ and  $0<\rho< \sigma \bar R$,  we consider the following two possibilities:

{\bf Case 1:} $y_0\in B_\rho(z)$. Then $B_\rho(z)\subset  B_{2\rho}(y_0) \subset B_{r}(y_0)$. In particular, $z\not\in \partial B_{r}(y_0)$ and so we must have $z\in \partial\Omega$. These together with  the Reifenberg flat condition for $\Omega$  give
\begin{align*}
|B_{\rho}(z)\cap V |
=|B_{\rho}(z)\cap \Omega|
\geq \big|B_{\rho}(z)\cap \{x: x_n >z_n + \rho \delta\} \big|
\geq \frac{(1-\delta)^n}{n} |B_{\rho}(z)|
\geq \frac{1}{2^n} \, |B_{\rho}(z)|.
\end{align*}

{\bf Case 2:} $y_0\not\in  B_\rho(z)$.
Then the line  passing through $z$ and $y_0$ intersects  $\partial B_{\rho}(z)$ at two distinct points, say $a_1$ and $a_2$ with $a_1$ 
being the one on the same side as  $y_0$ with respect to the point $z$. As $|a_1 -z| =\rho \leq  |y_0 - z|$, we have in addition that 
$a_1$ belongs to the line segment $[z,y_0]\subset \overline\Omega$ connecting $z$ and $y_0$.
In particular,  $a_1\in B_{r}(y_0)$ since 
$|a_1 - y_0|<|z- y_0|\leq r$.  By letting $w$ be the midpoint of $a_1$ and $z$ we obviously have 
  $w\in \overline\Omega$ and  $B_{\frac{\rho}{2}}(w) \subset B_{\rho}(z)$. Due to  $y_0\not\in  [w,z]$, we have
  $|w- y_0|=|z-y_0|-|z-w|\leq 
  r -\frac{\rho}{2}$. Hence 
       $B_{\frac{\rho}{2}}(w) \subset  B_{r}(y_0)$ as  $x\in B_{\frac{\rho}{2}}(w)$ implies that
   $|x- y_0| \leq |x - w| +|w-y_0|<\frac{\rho}{2} +r -\frac{\rho}{2}=r$. Therefore, we infer that
$B_{\frac{\rho}{2}}(w) \subset B_{\rho}(z)\cap B_{r}(y_0)$ giving 
\begin{equation}\label{w-center}
 |B_{\rho}(z)\cap V |=|B_{\rho}(z)\cap B_r(y_0)\cap \Omega|
\geq |B_{\frac{\rho}{2}}(w)\cap \Omega|.
\end{equation}
If $B_{\frac{\rho}{4}}(w)\subset  \Omega$, then it follows from \eqref{w-center} that $|B_{\rho}(z)\cap V |\geq |B_{\frac{\rho}{4}}(w)|
= 4^{-n} |B_{\rho}(z)|$. Otherwise, there exists $\bar w \in B_{\frac{\rho}{4}}(w)\cap  \partial\Omega$ implying that 
$B_{\frac{\rho}{4}}(\bar w)\subset B_{\frac{\rho}{2}}(w)$. Then by combining with  \eqref{w-center} and  the Reifenberg flat condition we 
obtain
\begin{align*}
|B_{\rho}(z)\cap V |\geq |B_{\frac{\rho}{4}}(\bar w)\cap \Omega|
\geq \big|B_{\frac{\rho}{4}}(\bar w) \cap \{x: x_n >\bar w_n + \frac{\rho}{4} \delta\} \big|
\geq \frac{(1-\delta)^n}{n} |B_{\frac{\rho}{4}}(\bar w)|
\geq \frac{1}{8^n} \, |B_{\rho}(z)|.
\end{align*}
In summary, the above arguments show that $V$ satisfies condition \eqref{V} with $c_0 = 8^{-n}$.
Thus, if 
 $u$ satisfies in addition that
\begin{equation*}\label{A-initial-distribution-condition}
w\big( V: 
\M_{U}(|\nabla u|^p)> N \}\big) <  \e \, w(B_{\sigma \bar R}(z_i)) \quad \forall i=1,2,...,L,
\end{equation*}
then  we can apply   Lemma~\ref{A-second-density-est} to get that
\beq\label{A-initial-distribution-est}
w\big(\{V: \M_{U}(|\nabla u|^p)> N\}\big)
\leq \e_1  \left[
w\big(\{V: \M_{U}(|\nabla u|^p)> 1\}\big)
+ w\big(\{ V: \M_{U}(|\nabla\psi|^p +|\F |^{p'})> \delta \}\big)\right].
\eeq
 Therefore, we can repeat the same arguments as in the proof of Theorem~\ref{thm:main} to   obtain 
\begin{equation*}
\fint_{V}|\nabla u|^q dw 
\leq  C\left( \|\nabla u\|_{L^p(U)}^q+ \fint_{V}\M_{U}(|\nabla\psi|^p +|\F|^{p'})^{\frac{q}{p}} \, dw  \right)
\end{equation*}
with the constant  $C$ depending only on $p$, $q$, $n$, $\omega$, $ \Lambda$,     $M$, $R$, $s$, and $[w]_{A_s}$. 
 Using the definitions of $U$ and $V$,
we infer from this estimate that
\begin{align}\label{eq:universal}
\frac{1}{w(B_r(y_0))}\int_{\Omega_r(y_0)}
&|\nabla u|^q dw 
\leq  C\left(\frac{w(\Omega_r(y_0))}{w(B_r(y_0))} \|\nabla u\|_{L^p(\Omega_{2 r}(y_0))}^q+ \frac{1}{w(B_r(y_0))}\int_{\Omega_r(y_0)}\M_{\Omega_{2 r}(y_0)}(|\nabla\psi|^p +|\F|^{p'})^{\frac{q}{p}} \, dw
\right)\nonumber\\
&\leq  C\left(\|\nabla u\|_{L^p(\Omega_{2 r}(y_0))}^q+ \frac{1}{w(B_r(y_0))}\int_{\Omega_r(y_0)}\M_{\Omega_{2 r}(y_0)}(|\nabla\psi|^p +|\F|^{p'})^{\frac{q}{p}} \, dw  \right)
\quad \,\,\,\forall r\geq \frac{R}{2}. 
\end{align}
We next consider the case  $0<r<R/2$. 
Let  us rescale the problem by setting $\tilde y_0 := r^{-1} y_0$,
$\tilde\Omega := \{r^{-1} x: \, x\in\Omega\}$,  $\tilde w(x) := w(r x )$,  and 
\[
\tilde\A(x, z, \xi) = \A(r x , z,\xi),\quad \tilde\F(x)=\F(r x), \quad \tilde u(x) = r^{-1} u(r x),
\quad \tilde\psi(x) = r^{-1} \psi(r x), \quad\, \,  \tilde\theta = \theta r. 
\]
Then $\tilde u$ is a weak solution of  $\div \Big[\frac{\tilde\A(x,\lambda\tilde\theta \tilde u, \lambda \nabla \tilde u)}{\lambda^{p-1}}\Big] = \div \tilde \F\,$ in 
$\tilde \Omega$ and $\tilde u=\tilde \psi$ on $\partial\tilde \Omega$. We also have
\[
  \Big(\fint_{B_\rho(z)} \tilde w(x)\, dx\Big) \Big(\fint_{B_\rho(z)} \tilde w(x)^{\frac{-1}{s-1}}\, dx\Big)^{s-1}
  =\Big(\fint_{B_{\rho r}(rz )} w(y)\, dy\Big) \Big(\fint_{B_{\rho r}(rz )} 
  w(y)^{\frac{-1}{s-1}}\, dy\Big)^{s-1}
\]
for any ball $B_\rho(z)\subset \R^n$, which implies that
$[\tilde w]_{A_s} = [w]_{A_s}$. Moreover, 
$\tilde\Omega$ is  $(\delta,R /r)$-Reifenberg flat with $R/r>2$, and hence  $\tilde\Omega$ is  
$(\delta,2 )$-Reifenberg flat.
Therefore, we can apply  estimate \eqref{eq:universal} for $R=2$ and for 
$\tilde u$ and weight $\tilde w(x)$ to obtain
\begin{equation}\label{radius=1-2}
\frac{1}{\tilde w (B_1(\tilde y_0))} \int_{\tilde\Omega_1(\tilde y_0)}|\nabla \tilde u|^q d\tilde w 
\leq  C\left[ \|\nabla \tilde u\|_{L^p(\tilde\Omega_2(\tilde y_0))}^q+ \frac{1}{\tilde w (B_1(\tilde y_0))} 
\int_{\tilde\Omega_1(\tilde y_0)}\M_{ \tilde\Omega_2(\tilde y_0)}(
|\nabla\tilde\psi|^p +|\tilde \F|^{p'})^{\frac{q}{p}} \, d\tilde w  \right].
\end{equation}
Since
\begin{align*}
 &\tilde w (B_1(\tilde y_0))=\int_{B_1(\tilde y_0)} w(r x)\, dx = r^{-n} \,  w (B_{r}(y_0)),\quad
 \int_{\tilde\Omega_2(\tilde y_0)} |\nabla \tilde u|^p\, dx = r^{-n}\int_{\Omega_{2 r}(y_0)}|\nabla u|^p\, dy,\\
 &\M_{\tilde\Omega_2(\tilde y_0)}(
|\nabla\tilde\psi|^p +|\tilde \F|^{p'})(x) = \M_{\Omega_{2 r}( y_0)}(
|\nabla\psi|^p +| \F|^{p'})(r x),
\end{align*}
by changing variables we see that \eqref{radius=1-2} is  equivalent to
\begin{equation*}
\frac{1}{w (B_{r}(y_0))} \int_{\Omega_{r}(y_0)}|\nabla u|^q d w 
\leq  C\left[ \Big(r^{-n}\int_{\Omega_{2 r}(y_0)}|\nabla u|^p\, dx\Big)^{\frac{q}{p}}+ \frac{1}{ w (B_{r}(y_0))} 
\int_{\Omega_{r}(y_0)}\M_{\Omega_{2 r}(y_0)}(
|\nabla\psi|^p +| \F|^{p'})^{\frac{q}{p}} \, d w  \right]
\end{equation*}
for $0<r<R/2$. This estimate together with \eqref{eq:universal} gives the conclusion of the theorem. 
We note that unlike the situation in \eqref{eq:universal}, the constant $C$ is  independent of 
$R$ when   $r< R/2$.  
\end{proof}
\begin{remark}
 It is important to stress that in  order to derive  the above estimate for one particular  region $\Omega_r(y)$, we only need to assume 
 $u=\psi$ on the portion $\partial\Omega\cap B_{2r}(y)$.
\end{remark}

\subsection{Global gradient estimates in weighted Morrey spaces}\label{sec:Morrey-Spaces}
In this subsection we present the proofs of Theorem~\ref{thm:weighted-morrey} and Theorem~\ref{thm:global-weighted-morrey}. 
 \begin{proof}[\textbf{Proof of Theorem~\ref{thm:weighted-morrey}}]
  Let $G^p:= |\psi|^p + |\F|^{p'}$.  Let $\bar x\in \Omega$ and $0<r\leq d:=\diam(\Omega)$.
Then by   applying  the localized estimate in 
 Theorem~\ref{thm:localized-main}  we obtain
\begin{align}\label{eq:localized-est}
\frac{1}{|w(B_r(\bar x))|}\int_{B_r(\bar x)\cap\Omega}|\nabla u|^q  dw
&\leq  C\Bigg[  \Big(\frac{1}{|B_{2r}(\bar x)|}\int_{B_{2 r}(\bar x)\cap\Omega} |\nabla u|^p dx\Big)^{\frac{q}{p}} + 
\frac{1}{|w(B_{r}(\bar x))|} \int_{B_r(\bar x)\cap\Omega}\M_{\Omega}(G^p)^{\frac{q}{p}}  dw \Bigg].
\end{align}
We next estimate the first term in the above right hand side. For this, let  $\e\in (0,n)$ to be determined later and  use the trick in  \cite[Page~2506]{MP2} to    write 
\begin{align*}
\frac{1}{|B_{2r}(\bar x)|}\int_{B_{2 r}(\bar x)\cap\Omega} |\nabla u|^p dx
=\omega_n^{-1}
(2 r)^{-\e}\int_{B_{2 r}(\bar x)\cap\Omega}|\nabla u|^p \bar w \, dx
\leq \omega_n^{-1} 
(2 r)^{-\e}\int_{ \Omega}|\nabla u|^p \bar w \, dx
\end{align*}
with $\omega_n :=|B_1|$ and  $\bar w$  being  the weight defined by 
\[
\bar w(x) := \min{\{|x - \bar x|^{-n+ \e  }, (2 r)^{-n +\e }\} }.
\]
As $\bar w\in A_t$  with 
$[\bar w]_{A_t} \leq C(t,\e,n)$ for any $1<t<\infty$ (see \cite[Lemma~3.2]{MP2}), we can apply Theorem~\ref{thm:main} with $q=p$  to estimate the above last integral.
As a consequence, we obtain 
\begin{align}\label{Mo2}
 \frac{1}{|B_{2r}(\bar x)|}\int_{B_{2 r}(\bar x)\cap\Omega} |\nabla u|^p dx
&\leq  C(2 r)^{-\e}\Bigg( \bar w(\Omega)\, \|\nabla u\|_{L^p(\Omega)}^p  +  \int_{\Omega} \M_{\Omega}(G^p) \, 
\bar w\, dx \Bigg)\nonumber\\
&\leq  C (2 r)^{-\e}\Bigg( \|\nabla u\|_{L^p(\Omega)}^p  +  \int_{\Omega} \M_{\Omega}(G^p) \, 
\bar w\, dx \Bigg)
\end{align}
with $C>0$  
 depending only on  $p$, $n$, $\omega$, $\Lambda$, $M$, $R$, $\diam(\Omega)$, and $\e$.
Notice that to obtain the last inequality we have used the fact
\[
\bar w(\Omega) \leq \int_{B_{d}(\bar x)} |x - \bar x|^{-n+\e } \, dx
=\omega_n \int_0^{d} t^{\e  -1 } dt
=\frac{ \omega_n }{\e} \, d^{\e}. 
\]
To bound  the last integral in \eqref{Mo2}, we employ    Fubini's theorem to get
 \begin{align*}
 \int_{\Omega} \M_{\Omega}(G^p)  \,  \bar w\, dx 
 &=\int_0^\infty \int_{\{\Omega: \bar w(x)>t\}} \M_{\Omega}(G^p)  \, dx dt
 \leq \int_0^{ (2 r)^{-n +\e }}   \int_{B_{t^{(-n +\e)^{-1}}}(\bar x)\cap \Omega}\M_{\Omega}(G^p)  \, dx dt\\
 &\leq \int_0^{ d^{-n +\e }}   \int_{\Omega}\M_{\Omega}(G^p)  \, dx dt
 +  \int_{d^{-n +\e }}^{ (2 r)^{-n +\e }}   \int_{B_{t^{(-n +\e)^{-1}}}(\bar x)\cap \Omega}\M_{\Omega}(G^p)  \, dx dt.
  \end{align*}
Since  $\Omega= B_{d}(\bar x)\cap \Omega$, we  then deduce that
    \begin{align*}
 \int_{\Omega} \M_{\Omega}(G^p)   \bar w\, dx 
 &\leq  C \| \M_{\Omega}(G^p)\|_{\calM^{1,\varphi^{\frac{p}{q}}}(\Omega)} 
 \left[ d^\e \,\, \varphi(B_{d}(\bar x))^{\frac{-p}{q}}  +  \int_{d^{-n +\e }}^{ (2 r)^{-n +\e }} 
t^{\frac{n}{-n +\e}} \varphi(B_{t^{(-n +\e)^{-1}}}(\bar x))^{\frac{-p}{q}}  dt\right],
  \end{align*}
  where we recall that $\calM^{1,\varphi}(U)$ denotes  the  Morrey space $\calM^{1,\varphi}_w(U)$ with $w=1$.  As $\varphi
  \in \mathcal B_+$
  by the assumption and $\{\mathcal B_\alpha\}$ is decreasing in $\alpha$, there exists $\alpha\in (0,n)$ such that  $\varphi\in
  \mathcal B_\alpha$.   It then follows if $\e< \alpha p/q$ that 
  \begin{align*}
 \int_{\Omega} \M_{\Omega}(G^p)  \bar w\, dx 
 &\leq   C (2 r)^{\alpha \frac{p}{q}}  \|\M_{\Omega}(G^p)\|_{\calM^{1,\varphi^{\frac{p}{q}}}(B_{\frac{15}{2}})} 
 \Bigg[d^{\e-\alpha \frac{p}{q}}  \varphi(B_{ r}(\bar x))^{\frac{-p}{q}}  + 
 \varphi(B_{2 r}(\bar x))^{\frac{-p}{q}}  \int_0^{ (2 r)^{-n +\e }} 
t^{\frac{n-\alpha \frac{p}{q}}{-n +\e}}   dt\Bigg]\\
 &\leq   C   r^{\e} \varphi(B_r(\bar x))^{\frac{-p}{q}}
 \|\M_{\Omega}(G^p)\|_{\calM^{1,\varphi^{\frac{p}{q}}}(\Omega)}.
   \end{align*}
 Combining this with \eqref{Mo2}, we arrive at:
\begin{align*}
  \frac{1}{|B_{2r}(\bar x)|}\int_{B_{2 r}(\bar x)\cap\Omega} |\nabla u|^p dx
\leq   C\Bigg(r^{-\e} \|\nabla u\|_{L^p(\Omega)}^p+ \varphi(B_r(\bar x))^{\frac{-p}{q}}
\| \M_{\Omega}(G^p)\|_{\calM^{1,\varphi^{\frac{p}{q}}}(\Omega)} \Bigg).
\end{align*}
Therefore, we infer from \eqref{eq:localized-est} and the fact $\varphi\in \mathcal B_\alpha$  that 
\begin{align*}
\frac{\varphi(B_r(\bar x)) }{|w(B_r(\bar x))|}\int_{B_r(\bar x)\cap\Omega}|\nabla u|^q  \, dw
&\leq  C_\e\Bigg[\varphi(B_r(\bar x)) r^{\frac{-\e q}{p}}\|\nabla u\|_{L^p(\Omega)}^q
+\| \M_{\Omega}(G^p)\|_{\calM^{1,\varphi^{\frac{p}{q}}}(\Omega)}^{\frac{q}{p}}
+
\|\M_{\Omega}(G^p)\|_{\calM^{\frac{q}{p},\varphi}_w(\Omega)}^{\frac{q}{p}}\Bigg]\\
&\leq  C_\e\Bigg[\varphi(B_{d}(\bar x)) r^{\alpha-\frac{\e q}{p}}\|\nabla u\|_{L^p(\Omega)}^q
+\| \M_{\Omega}(G^p)\|_{\calM^{1,\varphi^{\frac{p}{q}}}(\Omega)}^{\frac{q}{p}}
+
\|\M_{\Omega}(G^p)\|_{\calM^{\frac{q}{p},\varphi}_w(\Omega)}^{\frac{q}{p}}\Bigg]
\end{align*}
for all $\bar x\in \Omega$ and $0<r\leq d$.  By taking $\e = \frac{\alpha}{2}\frac{p}{q}$
and as $\sup_{\bar x\in \Omega} \varphi( B_{d}(\bar x))<\infty$, this gives estimate \eqref{weighted-morrey-est}.
   \end{proof}
  
  \begin{proof}[\textbf{Proof of Theorem~\ref{thm:global-weighted-morrey}}]
  Let $G^p:= |\psi|^p + |\F|^{p'}$.  Since  $v\in A_{\frac{q}{p}}$, 
  Lemma~2.1 in \cite{DiN} gives  
    $ v^{1-(\frac{q}{p})'} \in A_{(\frac{q}{p})'}\subset A_\infty$. Thus our assumptions imply that condition  ({\bf B}) 
  in  Lemma~\ref{lm:two-weight-case} is satisfied.
Also as $\varphi \in \mathcal B_0$, it is clear that \eqref{cond:vwvarphi}  yields \eqref{wmu1-vmu2-sufficient}. Indeed, for any 
$y\in \R^n$ and any 
$s\geq 2r>0$ we have from \eqref{cond:vwvarphi} and $\varphi \in \mathcal B_0$ that
\[
  \frac{v(B_s(y))}{w(B_s(y))} \frac{1}{\phi(B_s(y))} 
  \leq C_* \, \frac{1}{\varphi(B_{\frac{s}{2}}(y))} \leq C_* C \, \frac{1}{\varphi(B_r(y))}
\]
yielding \eqref{wmu1-vmu2-sufficient}.
Moreover, by  \cite[Theorem~9.3.3]{Gra} there exist $s\in (1,\infty)$ and $C>0$ depending only on $n$ and $[w]_{A_\infty}$ such that $[w]_{A_s}\leq C$.
 Therefore, it follows  from Theorem~\ref{thm:weighted-morrey} and  Lemma~\ref{lm:two-weight-case} that
\begin{equation}\label{eq:1.3+3.4}
\| \nabla u \|_{\calM^{q,\varphi}_w(\Omega)}  \leq    C\Bigg(
\|\nabla u\|_{L^p(\Omega)}
+ \| \M_{\Omega}(G^p)\|_{\calM^{1, \varphi^{\frac{ p}{q}}}(\Omega)}^{\frac{1}{p}}
+  \| G\|_{\calM^{q,\phi}_v(\Omega)}\Bigg)
\end{equation}
with $C>0$ depending  only on  $q$, $p$, $n$, $\omega$, $\Lambda$,   $M$, $R$, $\diam(\Omega)$,  $\varphi$, $C_*$, $[w]_{A_\infty}$, $[v]_{A_{\frac{q}{p}}}$,  and $[w, v^{1-(\frac{q}{p})'}]_{A_{\frac{q}{p}}}$.
Thus it remains to estimate the middle term on the right hand side of \eqref{eq:1.3+3.4}. Let $l:=q/p>1$. Then 
for  any nonnegative function $g\in L^1(\Omega)$,  we obtain  from   H\"older inequality and assumption  \eqref{cond:wv}
   that
  \begin{align*}
 \frac{\varphi(B_R(\bar x))}{|B_R(\bar x)|^l} \Big(\int_{B_R(\bar x)\cap \Omega} g \, dx\Big)^l
&\leq \frac{\varphi(B_R(\bar x))}{|B_R(\bar x)|^l} \Big( \int_{B_R(\bar x)\cap \Omega} g^l  v\, dx\Big) 
 \Big(\int_{B_R(\bar x)} v^{1 - l'} \Big)^{l-1}\\
 &\leq  [w, v^{1-l'}]_{A_l}  \frac{\varphi(B_R(\bar x))}{w\big( B_R(\bar x)\big)}  \int_{B_R(\bar x)\cap\Omega} g^l  v\, dx
 \leq   [w, v^{1-l'}]_{A_l} \|g\|_{\calM^{l,\hat\varphi}_v(\Omega)}^l\nonumber
  \end{align*}
  for all $\bar x\in \Omega$ and all $0<R\leq \diam( \Omega)$, where
  \[
  \hat\varphi(B) :=  \frac{v(B)}{w(B)} \varphi(B).
  \]
  Hence we infer  that
 \begin{equation}\label{eq:est-in-terms-v}
   \| \M_{\Omega}(G^p)\|_{\calM^{1, \varphi^{\frac{1}{l}}}(\Omega)}\leq 
    [w, v^{1-l'}]_{A_l}^{\frac{1}{l}} \,\, \| \M_{\Omega}(G^p)\|_{\calM^{l,\hat\varphi}_v(\Omega)}.
  \end{equation}
Using  $\phi\in \mathcal B_0$, condition \eqref{cond:vwvarphi}, and the doubling property of $w$ due to Lemma~\ref{weight:basic-pro}, we have
\begin{align*}
\sup_{s\geq 2r}  \frac{1}{\phi(B_s(y))} \leq
C \frac{1}{\phi(B_{2r}(y))}\leq C C_* \frac{w(B_{2r}(y))}{v(B_{2r}(y))}
\frac{1}{\varphi(B_r(y))}\leq C' \frac{w(B_r(y))}{v(B_r(y))}
\frac{1}{\varphi(B_r(y))} =C'\frac{1}{\hat\varphi(B_r(y))}
\end{align*} 
for all $y\in \R^n$ and $r>0$.    
 Thus as $v\in A_l$ we can use the strong type estimate for   the Hardy--Littlewood   maximal function 
 given by Lemma~\ref{lm:one-weight-case} to estimate the right hand side of \eqref{eq:est-in-terms-v}. As a result, we get
 \begin{equation*}\label{eq:option-1}
   \| \M_{\Omega}(G^p)\|_{\calM^{1, \varphi^{\frac{1}{l}}}(\Omega)}\leq C \|G^p\|_{\calM^{l,\phi}_v(\Omega)}
    = C \| G\|_{\calM^{q,\phi}_v(\Omega)}^p.
  \end{equation*}
 This and \eqref{eq:1.3+3.4} yield  desired estimate
\eqref{eq:neat-global-est}.
\end{proof}

\begin{appendices}
\section{A compactness argument}
\begin{lemma}\label{lm:compactness}
Let $\ba:\R^n\to\R^n$  be a continuous vector field such that 
\eqref{structural-consequence} holds and $|\ba(\xi)| \leq \Lambda |\xi|^{p-1}$
for some  constants $p>1$ and $\Lambda>0$.
Then for any  $\e>0$,  there exists a constant $\delta>0$ depending only on $\e$, $p$, $\Lambda$,  and  $n$ satisfying: 
if $\Omega\subset\R^n$ is  a bounded domain with
\begin{align*}
 B_{3} \cap \{x_n> 3\delta\}  \subset \Omega_{3} \subset B_{3}\cap \{x_n> -3 \delta\},
\end{align*}
$w\in W^{1,p}(\Omega_3)$ is a weak solution of 
\begin{equation*} 
 \left\{
 \begin{alignedat}{2}
  &\div \, \ba(\nabla w) =0\quad \mbox{in}\quad \Omega_{3},\\\
&w=0\qquad\qquad\mbox{on}\quad B_3 \cap  \partial \Omega
 \end{alignedat} 
  \right.
\end{equation*}
with  $ \frac{1}{|B_3|}\int_{\Omega_3} |\nabla w|^p\, dx\leq 1$,
then there exists  a weak solution $v$ of
\begin{equation*} 
 \left\{
 \begin{alignedat}{2}
  &\div \, \ba(\nabla  v)=0\quad \mbox{in}\quad B_{3}^+,\\\
& v=0 \qquad\qquad \mbox{ on}\quad B_{3} \cap \{x_n=0\}
 \end{alignedat} 
  \right.
\end{equation*}
satisfying  $ \frac{1}{|B_3|}\int_{B_3^+} |\nabla v|^p\, dx \leq (4^p\Lambda^2 )^p$ such that 
\begin{equation*}
\Big(\int_{B_{3} \cap \{x_n> 3\delta\} } |w- v|^p\, dx \Big)^{\frac1p}\leq \e^p.
\end{equation*} 
\end{lemma}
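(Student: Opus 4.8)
The statement is a standard compactness/contradiction argument, so the plan is to argue by contradiction. Suppose the conclusion fails: then there exist $\e_0>0$, a sequence $\delta_k\to 0^+$, bounded domains $\Omega^k$ with
\[
B_3\cap\{x_n>3\delta_k\}\subset \Omega^k_3\subset B_3\cap\{x_n>-3\delta_k\},
\]
and weak solutions $w_k\in W^{1,p}(\Omega^k_3)$ of $\div\,\ba(\nabla w_k)=0$ in $\Omega^k_3$ with $w_k=0$ on $B_3\cap\partial\Omega^k$ and $\frac{1}{|B_3|}\int_{\Omega^k_3}|\nabla w_k|^p\,dx\le 1$, such that for \emph{every} weak solution $v$ of the limiting half-ball problem with the stated energy bound one has $\big(\int_{B_3\cap\{x_n>3\delta_k\}}|w_k-v|^p\,dx\big)^{1/p}>\e_0^p$. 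The goal is then to extract a limit of the $w_k$ that solves the half-ball problem and satisfies the required energy bound, producing a contradiction.

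\textbf{Main steps.} First I would extend each $w_k$ by zero to all of $B_3$ (legitimate since $w_k=0$ on $B_3\cap\partial\Omega^k$ and $\Omega^k_3\supset B_3\cap\{x_n>3\delta_k\}$), obtaining $w_k\in W^{1,p}_0$-type functions on $B_3$ with $\|\nabla w_k\|_{L^p(B_3)}$ uniformly bounded; by Poincar\'e (the supports shrink toward the half-ball, which has boundary on $\{x_n=0\}$) the full $W^{1,p}(B_3)$ norms are bounded. Passing to a subsequence, $w_k\rightharpoonup w$ weakly in $W^{1,p}(B_3)$ and strongly in $L^p(B_3)$ (Rellich). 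Since $\Omega^k_3\to B_3^+$ in the Hausdorff sense and $w_k=0$ on $B_3\setminus\Omega^k_3\supset B_3\cap\{x_n<-3\delta_k\}$, one checks $w\equiv 0$ on $B_3\setminus B_3^+$, i.e. $w\in W^{1,p}(B_3^+)$ with $w=0$ on $B_3\cap\{x_n=0\}$. The delicate part is showing $w$ is a weak solution of $\div\,\ba(\nabla w)=0$ in $B_3^+$: test the equation for $w_k$ with $\varphi\in C_c^\infty(B_3^+)$ (which lies in the admissible test space for $\Omega^k_3$ once $k$ is large), so $\int_{B_3^+}\langle\ba(\nabla w_k),\nabla\varphi\rangle\,dx=0$; then upgrade weak convergence of $\nabla w_k$ to a.e. convergence via the Minty–Browder / monotonicity trick — this is where \eqref{structural-consequence} is essential. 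Concretely, using $w_k-w$ (cut off) as a test function together with strong $L^p$ convergence of $w_k-w$ and the monotonicity inequality \eqref{structural-consequence}, deduce $\int \langle \ba(\nabla w_k)-\ba(\nabla w),\nabla w_k-\nabla w\rangle\,dx\to 0$ on interior subsets, hence $\nabla w_k\to\nabla w$ strongly in $L^p_{loc}(B_3^+)$ (and a.e.); continuity of $\ba$ and the growth bound $|\ba(\xi)|\le\Lambda|\xi|^{p-1}$ then give $\ba(\nabla w_k)\to\ba(\nabla w)$ in $L^{p'}_{loc}$, so the limit equation holds.

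\textbf{Energy bound and conclusion.} For the quantitative energy bound on $v:=w$: from strong $L^p_{loc}$ convergence of the gradients, $\int_{B_3^+}|\nabla w|^p\,dx\le\liminf\int_{\Omega^k_3}|\nabla w_k|^p\,dx\le|B_3|$, which is far below $(4^p\Lambda^2)^p|B_3|$; the apparently generous constant is harmless, so the energy condition is trivially met (it is there to leave room, and one may instead invoke a Caccioppoli-type estimate on $B_3^+$ to control the gradient up to the flat boundary if a cleaner bound is wanted — e.g. testing with $w\phi^p$ and using \eqref{structural-consequence} with $\eta=0$). Finally, strong $L^p$ convergence $w_k\to w$ on $B_3$ forces $\int_{B_3\cap\{x_n>3\delta_k\}}|w_k-w|^p\,dx\to 0$, contradicting the assumed lower bound $\e_0^p$ for this particular $v=w$. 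This completes the contradiction and proves the lemma; the $\delta$ obtained is the one extracted along the failing sequence, depending only on $\e,p,\Lambda,n$.

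The step I expect to be the main obstacle is the passage to the limit in the nonlinear term $\ba(\nabla w_k)$ — i.e. promoting weak $W^{1,p}$ convergence to a.e. convergence of gradients — since $\ba$ is merely continuous with $p$-growth; handling this near the flat portion of the boundary (where the test functions must be chosen to respect the moving domains $\Omega^k_3$) requires care, but the monotonicity structure \eqref{structural-consequence} makes it routine.
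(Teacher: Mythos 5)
Your compactness/contradiction skeleton, the zero-extension and Rellich step, the identification of the limit domain, and the Minty--Browder monotonicity trick for passing to the limit in $\ba(\nabla w_k)$ are all sound, and the last of these is a detail the paper merely outsources to \cite{NP,BR}. The one genuine gap is the uniformity of $\delta$. Your contradiction sequence fixes the vector field $\ba$ and only varies $\delta_k$, $\Omega^k$, $w_k$; the $\delta$ produced by such an argument therefore a priori depends on $\ba$, not just on $\e,p,\Lambda,n$ as the lemma asserts (and as the application in Lemma~\ref{lm:step3} requires, where $\ba$ is the frozen averaged coefficient $\langle\A\rangle_{\Omega_{3\sigma}(y)}(\lambda\theta\bar u_{\Omega_{4\sigma}(y)},\lambda\,\cdot\,)/\lambda^{p-1}$, which varies with $u$, $y$, $\lambda$, $\theta$). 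The paper avoids this by letting $\ba^k$ vary along the contradiction sequence as well.

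Once $\ba^k$ varies, your shortcut of taking $v:=w$ directly breaks down: by assumption, the competitor $v$ in the contradiction hypothesis for index $k$ must solve the $\ba^k$-equation on $B_3^+$, whereas the limit $w$ only solves the equation for the limiting coefficient $\ba=\lim_k\ba^k$. This is exactly why the paper introduces, for each $k$, the auxiliary function $v^k$ solving $\div\,\ba^k(\nabla v^k)=0$ in $B_3^+$ with $v^k=w$ on $\partial B_3^+$: each $v^k$ is an admissible competitor for index $k$, carries the energy bound $(4^p\Lambda^2)^p$, and converges (by uniqueness of the limit problem, which follows from \eqref{structural-consequence}) to $w$ itself, so $w^k-v^k\to 0$, yielding the contradiction. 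Your argument proves a weaker statement than what the lemma (and its application) needs; to close the gap you would have to allow $\ba^k$ to vary and then perform the paper's auxiliary $v^k$-construction, since the direct identification $v=w$ is no longer available.
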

\begin{proof} 
Assume to the contrary that the statement is false. Then there exist  $\e_0>0$, $n\in\N$, $p>1$,  $\Lambda>0$, and sequences $\{\ba^k\}$, 
$\{\Omega^k\}$, $\{w^k\}$ such that for each $k\in\N $ we have:  $\ba^k$ satisfies \eqref{structural-consequence} and $|\ba^k(\xi)| \leq \Lambda |\xi|^{p-1}$, 
\begin{align*}
 B_{3} \cap \{x_n> \frac3k\}  \subset \Omega^k_{3} \subset B_{3}\cap \{x_n> -\frac3k\},
\end{align*}
$w^k\in W^{1,p}(\Omega^k_3)$ is a weak solution of 
  \begin{equation*} 
 \left\{
 \begin{alignedat}{2}
  &\div \, \ba^k(\nabla w^k) =0\quad \mbox{in}\quad \Omega^k_{3},\\\
&w^k=0\qquad\qquad\mbox{on}\quad B_3 \cap  \partial \Omega^k
 \end{alignedat} 
  \right.
\end{equation*}
with  $ \int_{\Omega^k_3} |\nabla w^k|^p\, dx\leq |B_3|$, and 
  \begin{equation}\label{contradic-ass}
\Big(\int_{B_{3} \cap \{x_n> \frac3k\} } |w^k- v|^p\, dx \Big)^{\frac1p}> \e_0^p
\end{equation}
for every weak solution $v$ of
\begin{equation*} 
 \left\{
 \begin{alignedat}{2}
  &\div \,\ba^k(\nabla  v)=0\quad \,\,\mbox{in}\quad B_{3}^+,\\\
& v=0 \qquad\qquad\quad \mbox{on}\quad B_{3} \cap \{x_n=0\}
 \end{alignedat} 
  \right.
\end{equation*}
satisfying $ \frac{1}{|B_3|}\int_{B_3^+} |\nabla v|^p\, dx \leq (4^p\Lambda^2 )^p$.
Notice that $\|w^k\|_{W^{1,p}(\Omega^k_3)}\leq C$ by using Pointcar\'e inequality. 
Then  as in \cite{NP,BR} we can show that there exist a continuous vector field
$ \ba:\R^n\to \R^n$   and a function $w\in W^{1,p}(B_3^+)$ with $ \int_{B_3^+} |\nabla w|^p\, dx\leq |B_3|$ such that up to a subsequence we have
$\ba^k(\xi)\to \ba(\xi)$ for all $\xi\in \R^n$, $w^k\to w$ strongly in $L^p_{loc}(B_3^+)$, and $\nabla w^k\to \nabla w$ weakly in $L^p_{loc}(B_3^+)$. 
Consequently,  $\ba$ satisfies  \eqref{structural-consequence} and  $|\ba(\xi)| \leq \Lambda |\xi|^{p-1}$.
We then infer by passing to the limit that 
 $w\in W^{1,p}(B_3^+)$ is a weak solution of  the equation 
\begin{equation}\label{limit-eq}
 \left\{
 \begin{alignedat}{2}
  &\div \, \ba(\nabla w)=0\quad \mbox{in}\quad B_{3}^+,\\\
& w=0 \qquad\qquad \mbox{ on}\quad B_{3} \cap \{x_n=0\}.
 \end{alignedat} 
  \right.
\end{equation}
Now for each $k\in \N$, let $v^k$ be a weak solution of 
\begin{equation*} 
 \left\{
 \begin{alignedat}{2}
  &\div \, \ba^k(\nabla v^k)=0\quad \mbox{in}\quad B_{3}^+,\\\
& v^k=w \qquad\qquad \mbox{ on}\quad \partial B_{3}^+.
\end{alignedat}
  \right.
\end{equation*}
In particular, we have $v^k=0$ on $  B_{3} \cap \{x_n=0\}$. Moreover, by using $v^k-w$ as a test function and due to 
the structural conditions for  $\ba^k$ we get
\begin{align*}
 \int_{B_3^+} |\nabla v^k|^p dx\leq 4^p\Lambda^2 \int_{B_3^+} |\nabla v^k|^{p-1} |\nabla w| dx 
 \leq   \frac{1}{p'}\int_{B_3^+} |\nabla v^k|^p  dx + \frac1p (4^p\Lambda^2 )^p \int_{B_3^+} |\nabla w|^p  dx
\end{align*}
yielding $\int_{B_3^+} |\nabla v^k|^p dx\leq (4^p\Lambda^2 )^p |B_3|$.
Hence it follows from \eqref{contradic-ass} that 
 \begin{equation}\label{cons-contradict}
\Big(\int_{B_{3} \cap \{x_n> \frac3k\} } |w^k- v^k|^p\, dx \Big)^{\frac1p}> \e_0^p.
\end{equation}
Notice that Pointcar\'e  inequality implies that the sequence $\{v^k\}$ is bounded in $W^{1,p}(B_3^+)$.
Thus there exists a function $v\in W^{1,p}(B_3^+)$ such that up to a subsequence we have 
$v^k\to v$ strongly in $L^p(B_3^+)$ and $\nabla v^k\to \nabla v$ weakly in $L^p(B_3^+)$. By passing to the limit we see that 
 $v\in W^{1,p}(B_3^+)$ is a weak solution of equation \eqref{limit-eq}.
 But as \eqref{limit-eq} has a unique weak solution since $\ba$ satisfies  \eqref{structural-consequence}, we infer that $ v\equiv w$. Therefore, 
 $w^k-v^k\to w-w=0$ strongly in $L^p_{loc}(B_3^+)$. This contradicts 
\eqref{cons-contradict} and the proof is complete.
\end{proof}

\end{appendices}


\begin{thebibliography}{10}
    \bibitem{AM} E.~Acerbi and G.~Mingione. {\it Gradient estimates for a class of parabolic systems.}
 Duke Math. J.  136  (2007),  no. 2, 285--320.
 
 
  \bibitem{BPS}
 S-S. ~Byun, D.~Palagachev  and P.~Shin. {\it Global Sobolev regularity for general elliptic equations of $p$-Laplacian type.}
 Calc. Var. Partial Differential Equations  57  (2018),  no. 5, Art. 135, 19 pp.
 
 \bibitem{BR}
 S-S. ~Byun  and S.~Ryu. {\it Global weighted estimates for the gradient of solutions to nonlinear elliptic equations.}
 Ann. Inst. H. Poincar\'e Anal. Non Lin\'eaire  30  (2013),  no. 2, 291--313.

 \bibitem{CC} L.~Caffarelli and X.~Cabr\'e. {\it Fully nonlinear elliptic
equations.} American Mathematical Society Colloquium Publications,
volume 43, 1995.

\bibitem{CP} L.~Caffarelli and I.~Peral. 
{\it    On $W^{1,p}$ estimates for elliptic equations in divergence  form.}
 Comm. Pure Appl. Math.  51  (1998),  no. 1, 1--21.
 
 \bibitem{D} E.~DiBenedetto. {\it  $C^{1+\alpha}$ local regularity of weak solutions of degenerate
 elliptic equations.} Nonlinear Anal.  7  (1983),  no. 8, 827--850.
 
\bibitem{DiN}
  G.~Di Fazio and T.~ Nguyen. {\it  Regularity estimates in weighted Morrey spaces 
for  quasilinear elliptic equations.}
  
  
 
\bibitem{Gi}
E.~Giusti. {\it Direct methods in the calculus of variations.}
World Scientific Publishing Co., Inc., River Edge, NJ,  2003.

\bibitem{Gra} L.~Grafakos. {\it Modern Fourier Analysis.} Graduate Text in Mathematics  Springer, second edition, 2009.

\bibitem{HNP1} L.~Hoang, T.~Nguyen and T.~Phan,
 {\it Gradient estimates and global existence of smooth solutions to a cross-diffusion system.}
    SIAM J. Math. Anal.  47  (2015),  no. 3, 2122--2177. 
    
\bibitem{KK}
T.~Kilpel\"ainen and P.~Koskela.
{\it Global integrability of the gradients of solutions to partial differential equations.}
 Nonlinear Anal.  23  (1994),  no. 7, 899--909.



 
 \bibitem{LMS}
 A.~Lemenant, E.~Milakis and  L.~Spinolo.  {\it On the extension property of Reifenberg-flat domains.}
 Ann. Acad. Sci. Fenn. Math.  39  (2014),  no. 1, 51--71.
 
\bibitem{Li}
G.~Lieberman. {\it   The Dirichlet problem for quasilinear elliptic equations with continuously differentiable boundary 
data.}
Comm. Partial Differential Equations  11  (1986),  no. 2, 167--229.
 
  \bibitem{MZ}
 J.~Mal\'y and W.~Ziemer.  {\it Fine regularity of solutions of elliptic partial differential
 equations.}
Mathematical Surveys and Monographs, 51. American Mathematical Society, Providence, RI,  1997.


\bibitem{MP2}
 T.~Mengesha and N.C.~Phuc. {\it  Weighted and regularity estimates for nonlinear equations on Reifenberg flat domains.}
 J. Differential Equations  250  (2011),  no. 5, 2485--2507.

\bibitem{N}
T. ~Nguyen. {\it Interior Calder\'on-Zygmund estimates for solutions to general parabolic equations of $p$-Laplacian type.}
 Calc. Var. Partial Differential Equations  56  (2017),  no. 6, Art. 173, 42 pp. 
 
\bibitem{NP}
T. ~Nguyen and T.~Phan. {\it Interior gradient estimates for quasilinear elliptic equations.}
 Calc. Var. Partial Differential Equations  55  (2016),  no. 3, Art. 59, 33 pp.
 
 \bibitem{Ph1} 
 T.~Phan. {\it Regularity estimates for BMO-weak solutions of quasilinear elliptic equations with inhomogeneous boundary conditions.}
 NoDEA Nonlinear Differential Equations Appl.  25  (2018),  no. 2, Art. 8, 49 pp.
 
 \bibitem{Ph2} 
 T.~Phan. {\it Interior gradient estimates for weak solutions of quasilinear 
$p$-Laplacian type equations.}
 Pacific J. Math.  Vol. 297 (2018), No. 1, 195–224, DOI: 10.2140/pjm.2018.297.195.

 \bibitem{Ra}
M.~Ragusa. {\it Regularity of solutions of divergence form elliptic equations.}
 Proc. Amer. Math. Soc.  128  (2000),  no. 2, 533--540.
 
 \bibitem{Re}
 E.~Reifenberg.  {\it Solution of the Plateau Problem for m-dimensional surfaces of varying
 topological type.}
 Acta Math.  104  (1960), 1--92.
 
 \bibitem{T} P.~Tolksdorf. {\it Regularity for a more general class of quasilinear elliptic
 equations.}
 J. Differential Equations  51  (1984),  no. 1, 126--150.
 
 \bibitem{To}
 T.~Toro. {\it Doubling and flatness: geometry of measures.}
 Notices Amer. Math. Soc.  44  (1997),  no. 9, 1087--1094.
 
 \bibitem{Tr} 
 N.~Trudinger.  {\it On Harnack type inequalities and their application to quasilinear
 elliptic equations.}
 Comm. Pure Appl. Math.  20  1967 721--747.
  
   \bibitem{Uh} K.~Uhlenbeck.  {\it Regularity for a class of non-linear elliptic systems.}
 Acta Math.  138  (1977),  no. 3-4, 219--240.
 
  \bibitem{Ur} N.~Ural\'tceva.  {\it Degenerate quasilinear elliptic systems.}
(Russian)  Zap. Nau\v{c}n. Sem. Leningrad. Otdel. Mat. Inst. Steklov. (LOMI)  7  1968, 184--222.
   \end{thebibliography}
  \end{document}